\documentclass[preprint,11pt]{elsarticle}
\usepackage{amsmath,amssymb}
\usepackage[all]{xy}
\usepackage{latexsym}
\usepackage{amsthm,a4wide,color}
\usepackage{amsmath,amscd,verbatim}
\usepackage{hyperref}
\input diagxy

\theoremstyle{plain}
  \newtheorem{thm}{Theorem}[section]
  \newtheorem{lem}[thm]{Lemma}
  \newtheorem{prop}[thm]{Proposition}
  \newtheorem{cor}[thm]{Corollary}
\theoremstyle{definition}
  \newtheorem{defn}[thm]{Definition}
  
  \newtheorem{exmp}[thm]{Example}

\makeatletter \def\ps@pprintTitle{  \let\@oddhead\@empty  \let\@evenhead\@empty  \def\@oddfoot{\centerline{\thepage}}  \let\@evenfoot\@oddfoot} \makeatother

\begin{document}

\newcommand{\oto}{{\to\hspace*{-3.1ex}{\circ}\hspace*{1.9ex}}}
\newcommand{\lam}{\lambda}
\newcommand{\da}{\downarrow}
\newcommand{\Da}{\Downarrow\!}
\newcommand{\D}{\Delta}
\newcommand{\ua}{\uparrow}
\newcommand{\ra}{\rightarrow}
\newcommand{\la}{\leftarrow}
\newcommand{\Lra}{\Longrightarrow}
\newcommand{\Lla}{\Longleftarrow}
\newcommand{\rat}{\!\rightarrowtail\!}
\newcommand{\up}{\upsilon}
\newcommand{\Up}{\Upsilon}
\newcommand{\ep}{\epsilon}
\newcommand{\ga}{\gamma}
\newcommand{\Ga}{\Gamma}
\newcommand{\Lam}{\Lambda}
\newcommand{\CF}{{\cal F}}
\newcommand{\CG}{{\cal G}}
\newcommand{\CH}{{\cal H}}
\newcommand{\CI}{{\cal I}}
\newcommand{\CB}{{\cal B}}
\newcommand{\CT}{{\cal T}}
\newcommand{\CS}{{\cal S}}
\newcommand{\CV}{{\cal V}}
\newcommand{\CP}{{\cal P}}
\newcommand{\CQ}{{\cal Q}}
\newcommand{\mq}{\mathcal{Q}}
\newcommand{\cu}{{\underline{\cup}}}
\newcommand{\ca}{{\underline{\cap}}}
\newcommand{\nb}{{\rm int}}
\newcommand{\Si}{\Sigma}
\newcommand{\si}{\sigma}
\newcommand{\Om}{\Omega}
\newcommand{\bm}{\bibitem}
\newcommand{\bv}{\bigvee}
\newcommand{\bw}{\bigwedge}
\newcommand{\lra}{\longrightarrow}
\newcommand{\tl}{\triangleleft}
\newcommand{\tr}{\triangleright}
\newcommand{\dda}{\downdownarrows}
\newcommand{\dia}{\diamondsuit}
\newcommand{\y}{{\bf y}}
\newcommand{\colim}{{\rm colim}}
\newcommand{\fR}{R^{\!\forall}}
\newcommand{\eR}{R_{\!\exists}}
\newcommand{\dR}{R^{\!\da}}
\newcommand{\uR}{R_{\!\ua}}
\newcommand{\swa}{{\swarrow}}
\newcommand{\sea}{{\searrow}}
\newcommand{\bbA}{{\mathbb{A}}}
\newcommand{\bbB}{{\mathbb{B}}}
\newcommand{\bbC}{{\mathbb{C}}}
\numberwithin{equation}{section}
\renewcommand{\theequation}{\thesection.\arabic{equation}}

\begin{frontmatter}
\title{Sober  metric approach spaces}

\author{Wei Li}
\ead{mathli@foxmail.com}
\author{Dexue Zhang}
\ead{dxzhang@scu.edu.cn}
\address{School of Mathematics, Sichuan University, Chengdu 610064, China}

\begin{abstract}It is proved that a metric space is sober,  as an approach space, if and only if it is Smyth complete. \end{abstract} \begin{keyword}Metric space \sep Yoneda completeness \sep Smyth completeness \sep approach space \sep metric  approach space \sep sober approach space \MSC[2010] 18B30 \sep 18B35 \sep   54B30
\sep 54E99	 \end{keyword}
 \end{frontmatter}


\section{Introduction}

Approach spaces,   introduced by Lowen  \cite{RL89}, are a common extension of topological spaces and metric spaces. By a metric on a set $X$ we understand, as in Lawvere \cite{Lawvere73}, a map $d:X\times X\rightarrow [0,\infty]$ such that $d(x,x)=0$ and $d(x,y)+d(y,z)\geq d(x,z)$  for all $x,y, z\in X$.
An extensive investigation of approach spaces can be found in the monographs of Lowen  \cite{RL97,Lowen15}. An approach space is said to be a topological one if it is generated by a topological space; and it is said to be a  metric one if it is generated by a metric space.

Sober approach spaces, a counterpart of sober topological spaces in the metric setting, are introduced in \cite{BRC}. It is proved there that a topological space is sober as an approach space, if and only if it is sober  as a topological space. So, it is natural   to ask what kind of  metric approach spaces are sober? A partial answer is obtained in \cite{BRC}. If $d$ is a usual metric (i.e., a   symmetric, separated  and finitary metric) on a set $X$,  it follows from Corollary 5.19 in \cite{BRC} that $(X,d)$ is  sober, as an approach space, if and only if $(X,d)$ is a complete metric space.
This paper presents a complete answer to this question. The answer is a bit surprising:  a metric space   is sober, as an approach space, if and only if it is Smyth complete.
A metric space is Smyth complete if every forward Cauchy net in it converges in its symmetrization \cite{Goubault,KS2002}.
Smyth completeness originated in the works of Smyth \cite{Smyth87,Smyth94} that aimed to provide a common framework for the domain approach  and the metric space approach to semantics in computer science.

As advocated in \cite{GH,Hof2011,HST}, in this paper  we emphasize that the relationship between  approach spaces and  metric spaces is analogous to that between topological spaces and ordered sets. This point of view has proved to be fruitful, and  is well in accordance with the thesis of Smyth \cite{Smyth87} ``that domains are, or should be, a prime area for the application of quasi-uniform ideas, and can help us to get the definitions right."

An order on a set $X$ is a map $X\times X\rightarrow  \{0,1\}$ fulfilling certain requirements; a topology (identified with the corresponding closure operator) is a map $X\times 2^X\rightarrow\{0,1\}$ (the transpose of the closure operator) that satisfies certain conditions. Replacing the quantale $2=(\{0,1\},\wedge)$  by Lawvere's quantale $([0,\infty]^{\rm op},+)$ in the postulations of ordered sets and topological spaces,   we obtain  metric spaces and approach spaces.

The following commutative squares exhibit some basic relationship among the categories of ordered sets, topological spaces, metric spaces and approach spaces:
\[\bfig \square<700,500>[{\sf Ord}` {\sf Top}` {\sf Met}  `{\sf App}; \Gamma` \omega ` \omega ` \Gamma] \square(1500,0)/>`<-`<-`>/<700, 500>[{\sf Top} `{\sf Ord} `{\sf App}`{\sf Met} ;  \Omega `\iota`\iota`\Omega] \efig\] where, \begin{itemize}  \item  the involved categories are ``self evident", and will be explained in the next section; \item the top row:   $\Gamma$ sends each  ordered set $(X,\leq)$ to its Alexandroff topology,   $\Omega $ sends a topological space to its specialization order; \item the bottom row: $\Gamma$ sends a metric space to the corresponding metric approach space, $\Omega$  sends an approach space to its specialization metric; \item $\omega$ (in both cases) is a full and faithful functor with  a right adjoint given by $\iota$. \end{itemize}

These facts can be found in \cite{RL97}. The bottom row is an analogy  of the  top row in the metric setting. In particular, approach spaces extend metric spaces, via the functor $\Gamma$, in the same way as topological spaces extend ordered sets. The problem considered in this paper is to characterize those metric spaces $(X,d)$ for which $\Gamma(X,d)$ are sober. To this end, some properties of the other functors  will also be considered. The main results include:

(1) The specialization metric of a sober approach space is Yoneda complete (Proposition \ref{Sober implies Yoneda}). This is an analogy  in the metric setting of the fact that the specialization order of a sober topological space is directed complete.

(2) For a metric space $(X,d)$, the specialization metric space of the sobrification of $\Gamma(X,d)$ coincides with the Yoneda completion of $(X,d)$    (Theorem \ref{5.8}).

(3) For a metric space $(X,d)$, the approach space $\Gamma(X,d)$ is sober if and only if $(X,d)$ is Smyth complete (Theorem \ref{main}).

\section{Topological spaces, metric  spaces, and approach spaces}
Write $2$ for the quantale (i.e., a small and complete monoidal closed category) $(\{0,1\},\wedge)$. An ordered set is then a $2$-enriched category. Precisely,  an ordered set is a set $X$ together with a map $p:X\times X\rightarrow 2$ such that for all $x,y,z\in X$: \begin{enumerate}
\item[\rm (P1)]   $p(x,x)=1$,   \item[\rm (P2)]  $p(x,y)\wedge p(y,z)\leq p(x,z)$.  \end{enumerate}

It is  traditional to write  $x\leq y$ for $p(x,y)=1$ in order theory.

Given a topological space $X$, the closure operator on $X$ induces a map $c:X\times 2^X\rightarrow 2$, given by
\[c(x,A)=\left\{\begin{array}{ll}1, & x\in \overline{A},\\
0, &x\notin \overline{A}.\end{array}\right.\] This map satisfies the following conditions: \begin{enumerate}
\item[\rm (C1)] $c(x, \{x\})=1$,
\item[\rm (C2)] $c(x, \emptyset)=0$,
\item[\rm (C3)] $c(x, A\cup B)=c(x, A)\vee c(x, B)$,
\item[\rm (C4)]
$c(x,A)\geq c(x,B)\wedge\bw_{y\in B}c(y,A)$. \end{enumerate}

The condition (C4) expresses the idempotency of the closure operator. Topologies on a set $X$ correspond  bijectively to maps $c:X\times 2^X\rightarrow 2$ that satisfy the conditions (C1)-(C4).

The \emph{specialization order} \cite{Johnstone} of a topological space $X$ is the composite \[X\times X\to^{(x,y)\mapsto(x, \{y\})}X\times 2^X\to^c 2;\]  or equivalently,    $x\leq y$ if $x\in\overline{\{y\}}$.   Taking specialization order defines  a  functor \[\Omega:{\sf Top}\rightarrow{\sf Ord}\] from the category of topological spaces and continuous maps to the category ${\sf Ord}$  of  ordered sets and order-preserving maps. The functor $\Omega$ has a left adjoint \[\Gamma:{\sf Ord}\rightarrow{\sf Top}\] that maps an ordered set $(X,\leq)$ to the space obtained by endowing $X$ with the Alexandroff topology of $(X,\leq)$ (i.e., the topology whose closed sets are the lower subsets in $(X,\leq)$).

A non-empty closed subset $A$ of a topological space $X$ is irreducible if for any closed subsets $B,C$, $A\subseteq B\cup C$  implies $A\subseteq B$ or $A\subseteq C$.
A topological space $X$ is sober if for each irreducible closed subset $A$, there exists a unique $x\in X$ such that $A$ equals the  closure of $\{x\}$. It is well-known that the specialization order of a sober  topological  space is directed complete, i.e., every directed set in it has a join \cite{GH,Johnstone}.

\begin{defn} (\cite{Lawvere73}) A metric space is a category enriched over the Lawvere quantale $([0,\infty]^{\rm op},+)$. Explicitly, a metric space   $(X,d)$ consists of a set $X$ and a map $d:X\times X\rightarrow [0,\infty]$ such that $d(x,x)=0$ and $d(x,y)+d(y,z)\geq d(x,z)$ for all $x,y, z\in X$.
The map $d$ is called a metric, and the value $d(x,y)$  the distance from $x$ to $y$. \end{defn}

A metric space $(X,d)$ is \emph{symmetric} if $d(x,y)=d(y,x)$ for all $x,y\in X$;  \emph{separated}  if $x=y$ whenever $d(x,y)=d(y,x)=0$; \emph{finitary} if $d(x,y)<\infty$ for all $x,y\in X$.  A metric space in the usual sense is exactly a symmetric, separated and finitary one. Given a metric $d$ on a set $X$, the opposite $d^{\rm op}$ of $d$  refers to the metric given by $d^{\rm op}(x,y)=d(y,x)$; the symmetrization $d^{\rm sym}$ of $d$ is given by $d^{\rm sym}(x,y)=\max\{d(x,y),d(y,x)\}$.

A non-expansive map $f: (X,d)\rightarrow (Y,p)$ between metric spaces is a map $f:X\rightarrow Y$ such that $d(x,y)\geq p(f(x),f(y))$ for all $x, y$ in $X$. Metric spaces and non-expansive maps form a category, denoted by ${\sf Met}$. A map $f: (X,d)\rightarrow (Y,p)$ between metric spaces is isometric if $d(x,y)=p(f(x),f(y))$ for all $x,y\in X$.

\begin{exmp}[The Lawvere metric, \cite{Lawvere73}]  For any $a,b$ in $[0,\infty]$, the Lawvere distance, $d_L(a,b)$, from $a$ to $b$ is defined to be the truncated minus $b\ominus a$, i.e.,
\[d_L(a,b)=b\ominus a= \max\{0,b-a\},\]
where we take by convention that $\infty-\infty=0$ and $\infty-a=\infty$ for all $a<\infty$. It is clear that $([0,\infty],d_L)$ is a separated, non-symmetric, and non-finitary  metric space.

The opposite of the Lawvere metric is denoted by $d_R$, i.e., $d_R(x,y)=x\ominus y$.
\end{exmp}

 Let $(X,d)$ be a metric space. A weight, a.k.a. a left module \cite{Lawvere73,SV2005}, of $(X,d)$ is a function $\phi:X\rightarrow[0,\infty]$ such that $\phi(x)\leq \phi(y)+d(x,y)$ for all $x,y\in X$. A coweight, a.k.a. a right module, of $(X,d)$ is a function $\psi:X\rightarrow[0,\infty]$ such that $\psi(y)\leq \psi(x)+d(x,y)$ for all $x,y\in X$.
Said differently, a weight of $(X,d)$ is a non-expansive map $\phi:(X, d)\rightarrow([0,\infty],d_R)$; a coweight of $(X,d)$ is a non-expansive map $\psi:(X, d)\rightarrow([0,\infty],d_L)$.

Given a metric space $(X,d)$, let $\CP X$ be the set of all weights of $(X,d)$. It is obvious that $\CP X$ has the following properties:
\begin{enumerate}[(W1)]
\item For each $x\in X$, $d(-,x)\in\CP X$. Such  weights are said to   be  representable.
\item For each subset $\{\phi_i\}_{i\in I}$ of $\CP X$,  both  $\inf_{i\in I}\phi_i$ and $\sup_{i\in I}\phi_i$ are  in $\mathcal{P}X$.
\item For all $\phi\in\mathcal{P}X$ and $\alpha\in[0,\infty]$, both $\phi+\alpha$ and $\phi\ominus\alpha$ are in $\mathcal{P}X$.
\end{enumerate}

For all $\phi,\psi\in\CP X$, let \[\overline{d}(\phi,\psi)=\sup_{x\in X}d_L(\phi(x),\psi(x))\] Then $\overline{d}$ is a separated metric on $\CP X$. For all $x\in X$ and $\phi\in\CP X$, it holds that \[\overline{d}(d(-,x),\phi)=\phi(x).\] In particular, the correspondence  $x\mapsto d(-,x)$ defines an isometric map $(X,d)\rightarrow(\CP X,\overline{d})$. That is, $d(x,y)=\overline{d}(d(-,x),d(-,y))$ for all $x,y\in X$. These facts are instances of the Yoneda lemma and the Yoneda embedding in enriched category theory, see e.g. \cite{Lawvere73}.


\begin{defn}\label{AP} (\cite{RL89,RL97}) An approach space $(X,\delta)$ consists of a set $X$ and  a map $\delta:X\times 2^X \rightarrow [0,\infty]$,    subject to the following conditions:
\begin{enumerate}
\item[\rm (A1)] $\delta(x, \{x\})=0$,
\item[\rm (A2)] $\delta(x, \emptyset)=\infty$,
\item[\rm (A3)] $\delta(x, A\cup B)=\min\{\delta(x, A),\delta(x, B)\}$,
\item[\rm (A4)]  $\delta(x,A)\leq\delta(x,B)+\sup_{b\in B}\delta(b,A)$, \end{enumerate}  for all $x\in X$  and $ A,B\in 2^X$. The map $\delta$ is called an approach distance on $X$.
\end{defn}

It should be noted that in  \cite{RL89,RL97}, instead of (A4), the following condition is used in the definition of approach spaces: \begin{enumerate}
\item[\rm (A4')]  For all $\varepsilon\in[0,\infty]$, $\delta(x, A)\leq \delta(x, A^\varepsilon)+ \varepsilon$, where $A^\varepsilon=\{x\in X|\ \delta(x,A) \leq\varepsilon\}$.
\end{enumerate} In the presence of   (A1)-(A3),   (A4') is equivalent to (A4). The implication (A4'$)\Rightarrow($A4) is   contained in \cite{RL97}. Putting $B=A^\varepsilon$ in (A4) gives the converse implication.

The conditions (A1)-(A4) are  metric version   of (C1)-(C4), respectively. Thus, it can be said that while metric spaces are $[0,\infty]$-valued ordered sets, approach spaces are $[0,\infty]$-valued topological spaces. The theory of approach spaces has been extended to the quantale-valued setting in the recent paper \cite{LT16}.

A contraction $f: (X,\delta)\rightarrow (Y,\rho)$ between approach spaces is a map $f: X\rightarrow Y$ such that $\delta(x,A)\geq \rho(f(x),f(A))$ for all $A\subseteq X$ and $x\in X$.  Approach spaces and contractions form a category, denoted by ${\sf App}$.

Given an approach space $(X,\delta)$,  define $\Omega(\delta) :X\times X\rightarrow[0,\infty]$ by  $\Omega(\delta)(x,y)= \delta(x,\{y\})$, then $\Omega(\delta)$ is a metric on $X$, called the specialization metric of $(X,\delta)$.  The term \emph{specialization metric} is chosen because of its analogy to the specialization order of topological spaces.
The correspondence $(X,\delta)\mapsto (X,\Omega(\delta))$ defines a functor  \[\Omega:{\sf App}\rightarrow{\sf Met}. \] This functor is  a   counterpart of $\Omega:{\sf Top}\rightarrow{\sf Ord}$ in the metric setting. We denote both of them by $\Omega$, since it is easy to detect from the context which one is meant.

Given a metric space $(X,d)$, define $\Gamma(d):X\times 2^X \rightarrow [0,\infty]$ by \[\Gamma(d)(x,A)=\left\{\begin{array}{ll}\infty, &A=\emptyset,\\ \inf\limits_{y\in A}d(x,y),& A\not=\emptyset. \end{array}\right.\]   Then $\Gamma(d)$ is an approach distance on $X$, called the Alexandroff distance generated by $d$.
The correspondence $(X,d)\mapsto(X,\Gamma(d))$ defines a full and faithful functor $$\Gamma:{\sf Met}\rightarrow {\sf App}$$
 that is left adjoint to $\Omega:{\sf App}\rightarrow{\sf Met}$ \cite{RL97}. In particular,  {\sf Met} is a coreflective full subcategory of {\sf App}. A  space of the form $\Gamma(X, d)$ is said to be a metric approach space.  The functor   $\Gamma:{\sf Met}\rightarrow {\sf App}$ is a  metric version of $\Gamma:{\sf Ord}\rightarrow {\sf Top}$.

\begin{exmp}\label{real approach}   For all $x\in[0,\infty]$ and $A\subseteq[0,\infty]$, let
$$\delta_\mathbb{P}(x,A)= \left\{\begin{array}{ll}\max\{x-\sup A,0\}, & A\neq\emptyset,\\
\infty, &A=\emptyset.
\end{array}\right.$$ Then $\delta_\mathbb{P}$ is an approach  distance on $[0,\infty]$. The approach space  $\mathbb{P}=([0,\infty],\delta_\mathbb{P})$ is introduced in Lowen  \cite{RL97}, it plays an important role in the theory of approach spaces.

The specialization metric of $\mathbb{P}$ is the opposite $d_R$ of the Lawvere distance on $[0,\infty]$, i.e., $d_R(a,b)=a\ominus b$. The approach space $\mathbb{P}$ is not a metric one. In fact,   for all $x\in[0,\infty]$ and $A\subseteq[0,\infty]$,
\[\Gamma(d_R)(x,A)= \left\{\begin{array}{ll}\infty, &A=\emptyset, \\ \max\{x-\sup A,0\}, & x\not=\infty, A\neq\emptyset,\\
\infty, & x=\infty, \infty\notin A,\\ 0, & x=\infty\in A.
\end{array}\right.\]
So,  $\delta_\mathbb{P}$ and $\Gamma(d_R)$ are different  approach distances.
\end{exmp}

Approach spaces can be equivalently described in many ways \cite{RL97}, one of them we need  is the description by regular functions.  A  regular function of an approach space $(X,\delta)$ is a contraction $\phi:(X,\delta)\rightarrow\mathbb{P}$, where $\mathbb{P}$ is the  approach space given in Example \ref{real approach}. Explicitly, a regular function of $(X,\delta)$ is a function $\phi:X\rightarrow[0,\infty]$ such that \[\delta(x,A)\geq \phi(x)\ominus \sup \phi(A)\] for all $x\in X$ and all $A\subseteq X$.

For each subset $A$ of $X$, the condition (A4) in the definition of approach spaces ensures that $\delta(-,A)$ is a regular function  of $(X,\delta)$.

The following proposition says that an approach space is uniquely determined by its regular functions.
\begin{prop} (\cite{RL97}) \label{regular functions} Let $(X,\delta)$ be an approach space. Then the set $\mathcal{R}X$ of regular functions of $(X,\delta)$   satisfies the following conditions:
\begin{enumerate}[(R1)]
 \item For each subset $\{\phi_i\}_{i\in I}$ of $\mathcal{R}X$,  $\sup_{i\in I}\phi_i\in \mathcal{R}X$.
\item For all   $\phi,\psi\in\mathcal{R}X$,  $\min\{\phi,\psi\}\in \mathcal{R}X$.
\item For all $\phi\in\mathcal{R}X$ and $\alpha\in[0,\infty]$, both $\phi+\alpha$ and $\phi\ominus \alpha$ are in $\mathcal{R}X$.
\end{enumerate}

Conversely, suppose that $\mathcal{S}\subseteq [0,\infty]^X$ satisfies the conditions {\rm(R1)--(R3)}. Define a function $\delta: X\times 2^X\rightarrow [0, \infty]$ by \begin{equation}\delta(x,A)=\sup\{\phi(x)\mid \phi\in\mathcal{S}, \forall a\in A, \phi(a)=0\}.\end{equation}  Then $(X,\delta)$ is an approach space with   $\mathcal{S}$ being its set of regular functions. \end{prop}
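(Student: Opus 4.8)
The plan is to prove the two halves separately: first that the set $\mathcal{R}X$ of regular functions of a given approach space $(X,\delta)$ satisfies (R1)--(R3), and then that the displayed formula reconstructs an approach distance whose regular functions are precisely the prescribed family $\mathcal{S}$. Throughout I would use the transposed form of regularity, namely that $\phi$ is regular iff $\phi(x)\le\delta(x,A)+\sup_{a\in A}\phi(a)$ for all $x$ and $A$.

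For the forward direction, (R1) is immediate: if $\phi=\sup_i\phi_i$, then $\sup\phi(A)=\sup_i\sup\phi_i(A)\ge\sup\phi_i(A)$, so each $\phi_i(x)\le\delta(x,A)+\sup\phi(A)$, and taking the supremum over $i$ gives the inequality for $\phi$. For (R3) I would note that $u\mapsto u+\alpha$ and $u\mapsto u\ominus\alpha$ both commute with suprema and satisfy $(u+\alpha)\ominus(v+\alpha)=u\ominus v$ and $(u\ominus\alpha)\ominus(v\ominus\alpha)\le u\ominus v$; hence the defining inequality is inherited by $\phi+\alpha$ and $\phi\ominus\alpha$. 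The one genuinely nontrivial clause is (R2). Here I would set $s=\sup_{a\in A}\min\{\phi(a),\psi(a)\}$ and split $A=A_\phi\cup A_\psi$ with $A_\phi=\{a\in A:\phi(a)\le s\}$ and $A_\psi=\{a\in A:\psi(a)\le s\}$ (every $a$ lies in one of them, since $\min\{\phi(a),\psi(a)\}\le s$). Applying regularity of $\phi$ over $A_\phi$ and of $\psi$ over $A_\psi$ yields $\min\{\phi(x),\psi(x)\}\le\delta(x,A_\phi)+s$ and $\min\{\phi(x),\psi(x)\}\le\delta(x,A_\psi)+s$, and then (A3) in the form $\delta(x,A)=\min\{\delta(x,A_\phi),\delta(x,A_\psi)\}$ recombines these into $\min\{\phi(x),\psi(x)\}\le\delta(x,A)+s$; empty pieces are absorbed by (A2).

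For the converse I would first verify (A1)--(A4) for $\delta(x,A)=\sup\{\phi(x):\phi\in\mathcal{S},\ \phi|_A=0\}$. Since $\mathcal{S}$ is nonempty, (R3) forces all constants into $\mathcal{S}$ (take $\phi\ominus\infty\equiv0$, then $0+\alpha\equiv\alpha$), which yields (A1) and (A2). For (A3) I would use that the min of a function vanishing on $A$ and one vanishing on $B$ vanishes on $A\cup B$ (by (R2)), together with $\sup_{\phi,\psi}\min=\min\{\sup_\phi,\sup_\psi\}$. For (A4): if $\phi\in\mathcal{S}$ vanishes on $A$, then by definition $\phi(b)\le\delta(b,A)\le\varepsilon:=\sup_{b\in B}\delta(b,A)$ for every $b\in B$, so $\phi\ominus\varepsilon\in\mathcal{S}$ (by (R3)) vanishes on $B$, giving $\phi(x)\ominus\varepsilon\le\delta(x,B)$ and hence, after taking suprema, $\delta(x,A)\le\delta(x,B)+\varepsilon$. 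The inclusion $\mathcal{S}\subseteq\mathcal{R}X$ is then easy: for $\phi\in\mathcal{S}$ the function $\phi\ominus\sup_A\phi\in\mathcal{S}$ vanishes on $A$, whence $\phi(x)\ominus\sup_A\phi\le\delta(x,A)$; equivalently $\delta(x,A)=\sup_{\phi\in\mathcal{S}}\bigl(\phi(x)\ominus\sup_A\phi\bigr)$.

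The remaining inclusion $\mathcal{R}X\subseteq\mathcal{S}$ is where the work lies. I would reduce it to showing that every regular $\mu$ equals $\sup\{\phi\in\mathcal{S}:\phi\le\mu\}$, which then lies in $\mathcal{S}$ by (R1). The natural lower witnesses are the pointed functions $g_{x_0}(y):=\mu(x_0)\ominus\delta(x_0,\{y\})$: each satisfies $g_{x_0}\le\mu$ because $\mu(x_0)\le\mu(y)+\delta(x_0,\{y\})$ (regularity of $\mu$ at singletons, i.e. the triangle inequality for the specialization metric $\Omega(\delta)$), while $g_{x_0}(x_0)=\mu(x_0)$, so that $\sup_{x_0}g_{x_0}=\mu$. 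The \textbf{main obstacle} is to certify that each $g_{x_0}$ actually belongs to $\mathcal{S}$. I would attempt this by approximating $g_{x_0}$ from below by truncations and translations of the generators $\delta(-,A)$, say by the members $\min\{\delta(-,A_r),\ \mu(x_0)\ominus r\}\in\mathcal{S}$ (using (R1) and (R2)) with $A_r=\{y:\delta(x_0,\{y\})\ge r\}$; these lie below $g_{x_0}$ automatically, and closing the family under (R1) would recover $g_{x_0}$ and then $\mu$. The delicate point, which I expect to absorb most of the effort, is the attainment of the correct height at $x_0$, i.e. the lower bound $\delta(x_0,A_r)\ge r$; this is exactly the place where the combined strength of (R1)--(R3) must be exploited, and it is the heart of the converse.
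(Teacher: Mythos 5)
The paper itself offers no proof of this proposition (it is quoted from Lowen's book), so your argument has to stand on its own. Its first three quarters do: the verification of (R1)--(R3) for $\mathcal{R}X$ (including the nice splitting $A=A_\phi\cup A_\psi$ for (R2)), the verification of (A1)--(A4) for the distance generated by $\mathcal{S}$, and the inclusion $\mathcal{S}\subseteq\mathcal{R}X$ are all correct.

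The gap is the final inclusion $\mathcal{R}X\subseteq\mathcal{S}$, and it is not merely an unfinished step: the claim you reduce it to is false. Your cone functions $g_{x_0}(y)=\mu(x_0)\ominus\delta(x_0,\{y\})$ need not belong to $\mathcal{S}$, even when $\mu$ does. Take $X=\mathbb{R}$ with the Euclidean topology, viewed as an approach space: $\delta(x,A)=0$ if $x\in\overline{A}$ and $\delta(x,A)=\infty$ otherwise. Here $\mathcal{S}=\mathcal{R}X$ is the set of lower semicontinuous functions $\mathbb{R}\to[0,\infty]$, which satisfies (R1)--(R3) and generates this $\delta$. Let $\mu\equiv 1$ (regular, being constant). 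Since $\delta(x_0,\{y\})=0$ iff $y=x_0$ and $=\infty$ otherwise, $g_{x_0}$ is the indicator-type function with value $1$ at $x_0$ and $0$ elsewhere, which is not lower semicontinuous, hence $g_{x_0}\notin\mathcal{S}$. Your proposed repair fails in the same example: for $r>0$ one has $A_r=\mathbb{R}\setminus\{x_0\}$, and since $x_0\in\overline{\mathbb{R}\setminus\{x_0\}}$, the hoped-for bound $\delta(x_0,A_r)\geq r$ is false ($\delta(x_0,A_r)=0$); indeed $\delta(-,A_r)\equiv 0$, so all your approximants vanish identically. Worse, no repair \emph{within} this reduction is possible: every $\phi\in\mathcal{S}$ with $\phi\leq g_{x_0}$ is identically $0$ in this example, so $g_{x_0}$ is not a supremum of members of $\mathcal{S}$ lying below it. The root cause is that a general approach distance is not determined by its values on singletons ($\delta(x,A)$ can be $0$ while $\inf_{a\in A}\delta(x,\{a\})=\infty$); your decomposition of $\mu$ into pointed cones is a phenomenon special to \emph{metric} approach spaces, which is exactly the case where the converse needs no work.

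A workable route approximates $\mu$ directly, using sublevel sets of $\mu$ itself so that regularity of $\mu$ supplies the height. Fix $\varepsilon>0$ and $N\in\mathbb{N}$, put $B_k=\{y\in X\mid \mu(y)\leq k\varepsilon\}$, and note $\delta(-,B_k)\in\mathcal{S}$ by (R1), since it is by definition a pointwise supremum of members of $\mathcal{S}$. Regularity of $\mu$ gives $\mu\leq\delta(-,B_k)+k\varepsilon$ for each $k$, so $h:=\min_{0\leq k\leq N}\bigl(\delta(-,B_k)+k\varepsilon\bigr)$ lies in $\mathcal{S}$ (by (R2), (R3)) and satisfies $\mu\leq h$; moreover, choosing for each $x$ with $\mu(x)\leq N\varepsilon$ the least admissible $k$ shows $h\leq\mu+\varepsilon$ on $\{\mu\leq N\varepsilon\}$. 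Consequently $\phi_{N,\varepsilon}:=\min\{h\ominus\varepsilon,\,N\varepsilon\}$ belongs to $\mathcal{S}$, satisfies $\phi_{N,\varepsilon}\leq\mu$ everywhere, and $\phi_{N,\varepsilon}\geq\min\{\mu\ominus\varepsilon,\,N\varepsilon\}$ pointwise. Taking the supremum over $N$ and then over $\varepsilon$ recovers $\mu$, so $\mu\in\mathcal{S}$ by (R1). This is the piece your proposal is missing, and it cannot be reached from the cone functions $g_{x_0}$.
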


Contractions between approach spaces can   be characterized in terms of regular functions.
\begin{prop}\label{contraction by regular frame} (\cite{RL97}) If $(X,\delta)$ and $(Y,\rho)$ are approach spaces and $f:X\rightarrow Y$ is a map, then $f$ is a contraction if and only if for each $\phi\in\mathcal{R}Y$, $\phi\circ f \in\mathcal{R}X$. \end{prop}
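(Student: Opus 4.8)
The plan is to prove the two implications separately, exploiting the fact that, by definition, a regular function of an approach space is exactly a contraction into $\mathbb{P}$.

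For the forward direction, suppose $f$ is a contraction and take any $\phi\in\mathcal{R}Y$. I would simply chain the two defining inequalities: for every $x\in X$ and $A\subseteq X$, contractivity of $f$ gives $\delta(x,A)\geq\rho(f(x),f(A))$, while regularity of $\phi$, applied at the point $f(x)$ and the subset $f(A)$, gives $\rho(f(x),f(A))\geq\phi(f(x))\ominus\sup\phi(f(A))$. Composing yields $\delta(x,A)\geq(\phi\circ f)(x)\ominus\sup(\phi\circ f)(A)$, which is precisely the statement that $\phi\circ f\in\mathcal{R}X$. (Equivalently, this is just the observation that contractions into $\mathbb{P}$ compose with contractions.) This half is routine.

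The converse is the substantive direction, and here the idea is to recover the contraction inequality $\delta(x,A)\geq\rho(f(x),f(A))$ by testing the hypothesis against a single, carefully chosen regular function of $Y$. Fix $x\in X$ and $A\subseteq X$. As noted after Proposition \ref{regular functions}, condition (A4) guarantees that $\psi:=\rho(-,f(A))$ is a regular function of $(Y,\rho)$; hence by hypothesis $\psi\circ f\in\mathcal{R}X$. The key step is to evaluate the regularity inequality for $\psi\circ f$ at the point $x$ and the subset $A$ itself, which gives
\[\delta(x,A)\geq(\psi\circ f)(x)\ominus\sup_{a\in A}(\psi\circ f)(a)=\rho(f(x),f(A))\ominus\sup_{a\in A}\rho(f(a),f(A)).\]
It remains to check that the subtracted supremum is $0$: for each $a\in A$ we have $f(a)\in f(A)$, so monotonicity of $\rho$ in its second variable (a consequence of (A3)) together with (A1) forces $\rho(f(a),f(A))\leq\rho(f(a),\{f(a)\})=0$. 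Therefore $\delta(x,A)\geq\rho(f(x),f(A))$, and since $x$ and $A$ are arbitrary, $f$ is a contraction.

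The main obstacle is locating the right test function for the converse. Once one realizes that $\rho(-,f(A))$ is itself regular and vanishes on the image $f(A)$, the remainder is a one-line evaluation; the temptation to try to build the inequality from arbitrary regular functions of $Y$ is what makes the step look harder than it actually is.
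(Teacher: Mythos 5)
Your proof is correct. Note, however, that the paper offers no proof of this proposition to compare against: it is quoted from Lowen's book \cite{RL97} and used as a known fact, so your argument stands on its own. Both halves check out. The forward direction is, as you say, just the statement that contractions compose: a regular function is a contraction into $\mathbb{P}$, so $\phi\circ f$ is one too, and your chained inequality $\delta(x,A)\geq\rho(f(x),f(A))\geq(\phi\circ f)(x)\ominus\sup(\phi\circ f)(A)$ makes this explicit. For the converse, the choice of test function $\psi=\rho(-,f(A))$ is exactly the right one: the paper itself records (after Proposition \ref{regular functions}) that (A4) makes every $\rho(-,B)$ regular, your evaluation at $(x,A)$ is valid, and the vanishing of $\sup_{a\in A}\rho(f(a),f(A))$ follows from (A1) and the antitonicity in the second variable given by (A3), as you argue. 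One cosmetic remark: the case $A=\emptyset$ needs no separate treatment since $\delta(x,\emptyset)=\infty$ by (A2), and in fact your displayed inequality handles it automatically; it would be worth one clause to say so.
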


Since $\Omega(\mathbb{P})=([0,\infty],d_R)$, each regular function of an approach space $(X,\delta)$ is  a weight of the metric space $(X,\Omega(\delta))$. Given a metric space $(X,d)$,   the universal property of the map ${\rm id}:(X,d)\ra\Omega\circ\Gamma(X,d)$ entails that a map $\phi:X\ra[0,\infty]$ is a weight of $(X,d)$ if and only if it is a regular function of $\Gamma(X,d)$, as stated in the following conclusion.
\begin{prop}(\cite{RL97}, Proposition 3.1.9) \label{regular function in MAS} For a metric space $(X,d)$,  a function $\phi:X\rightarrow[0,\infty]$ is a weight of $(X,d)$  if and only if it is a regular function of the approach space $\Gamma(X,d)$. \end{prop}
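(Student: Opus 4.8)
The plan is to prove both implications directly from the defining inequalities, using only the explicit formula for $\Gamma(d)$ and elementary properties of the truncated subtraction $\ominus$; an alternative, more abstract route through the adjunction $\Gamma\dashv\Omega$ is available (indeed hinted at by the universal property of ${\rm id}:(X,d)\ra\Omega\circ\Gamma(X,d)$), but the hands-on argument is shorter and self-contained.

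First I would rewrite the two conditions in parallel. By definition $\phi$ is a weight of $(X,d)$ exactly when $\phi(x)\ominus\phi(y)\leq d(x,y)$ for all $x,y\in X$, which is merely a restatement of $\phi(x)\leq\phi(y)+d(x,y)$. On the other hand $\phi$ is a regular function of $\Gamma(X,d)$ exactly when $\phi(x)\ominus\sup\phi(A)\leq\Gamma(d)(x,A)$ for all $x\in X$ and all $A\subseteq X$.

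For the implication from regular function to weight I would simply test the regular-function inequality on singletons $A=\{y\}$. Since $\Gamma(d)(x,\{y\})=d(x,y)$ and $\sup\phi(\{y\})=\phi(y)$, this immediately produces $\phi(x)\ominus\phi(y)\leq d(x,y)$, i.e.\ the weight condition. For the converse I would fix $x$ and $A$, dispose of the case $A=\emptyset$ trivially (there $\Gamma(d)(x,\emptyset)=\infty$), and for nonempty $A$ exploit that $\ominus$ is antitone in its second argument: from $\phi(y)\leq\sup\phi(A)$ for each $y\in A$ we get $\phi(x)\ominus\sup\phi(A)\leq\phi(x)\ominus\phi(y)\leq d(x,y)$, the last inequality being the weight condition. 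Since the leftmost term does not depend on $y$, taking the infimum over $y\in A$ yields $\phi(x)\ominus\sup\phi(A)\leq\inf_{y\in A}d(x,y)=\Gamma(d)(x,A)$, as required.

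There is no real obstacle in this argument; the only points demanding care are the bookkeeping at the empty set and at the value $\infty$ (invoking the conventions $\sup\emptyset=0$ and $\infty\ominus a=\infty$ for $a<\infty$) and the antitonicity of $\ominus$ in its second slot, which is precisely what drives the forward direction.
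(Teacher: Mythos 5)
Your proof is correct, but it takes a genuinely different route from the paper's. The paper does not verify the inequalities at all: it attributes the statement to Lowen (Proposition 3.1.9 of \cite{RL97}) and justifies it abstractly in the preceding paragraph, by observing that a regular function of $\Gamma(X,d)$ is by definition a contraction $\Gamma(X,d)\rightarrow\mathbb{P}$, that $\Omega(\mathbb{P})=([0,\infty],d_R)$, and that the adjunction $\Gamma\dashv\Omega$ --- invoked through the universal property of the unit ${\rm id}:(X,d)\rightarrow\Omega\circ\Gamma(X,d)$ --- puts contractions $\Gamma(X,d)\rightarrow\mathbb{P}$ in bijection with non-expansive maps $(X,d)\rightarrow([0,\infty],d_R)$, i.e.\ with weights; since the unit is the identity map, this bijection does not change the underlying function, which gives the proposition. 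Your argument instead unwinds both conditions and checks them by hand: testing the regular-function inequality on singletons $A=\{y\}$ recovers the weight condition (your translation $\phi(x)\leq\phi(y)+d(x,y)\Leftrightarrow\phi(x)\ominus\phi(y)\leq d(x,y)$ does survive the $\infty$-conventions), and conversely antitonicity of $\ominus$ in its second slot together with an infimum over $y\in A$ recovers the regular-function inequality from the weight condition, with $A=\emptyset$ disposed of by $\Gamma(d)(x,\emptyset)=\infty$. What the paper's route buys is conceptual economy and portability: there is no case analysis, and the same argument transfers verbatim to quantale-valued settings such as that of \cite{LT16}. What your route buys is self-containedness: it needs neither the identification $\Omega(\mathbb{P})=([0,\infty],d_R)$ nor the adjunction $\Gamma\dashv\Omega$, only the explicit formula for $\Gamma(d)$ and elementary properties of truncated subtraction, so it would serve a reader who has not yet absorbed the categorical framework.
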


\begin{defn}(\cite{BRC})   An approach prime of an approach space $(X,\delta)$ is a regular function $\phi$ subject to the following conditions: \begin{enumerate}[(1)] \item $\inf_{x\in X}\phi(x)=0$;  \item for all regular functions $\xi$ and $\psi$ of $(X,\delta)$, if $\min\{\xi, \psi\}\leq \phi$ then either $\xi\leq\phi$ or $\psi\leq\phi$. \end{enumerate} \end{defn}

For each element $x$ in an approach space $(X,\delta)$,  $\delta(-,\{x\})$ is an approach prime.
The following notion is  central  in this paper.

\begin{defn}(\cite{BRC}) An approach space $(X,\delta)$ is sober if for each approach prime  $\phi$ of $(X,\delta)$, there exists a unique $x\in X$ such that $\phi=\delta (-,\{x\})$.
\end{defn}

The approach space $\mathbb{P}$ is sober. This is proved in \cite{GVV}, Proposition 1.6. Another proof is contained in Proposition \ref{P is sober}.

 Write \[\omega: 2\rightarrow[0,\infty]\] for the map that sends $1$ in the quantale  $2$ to $0$ in $[0,\infty]$ and sends $0$ in   $2$ to $\infty$ in $[0,\infty]$.

 If $p:X\times X\rightarrow 2$ is an order on $X$, then the composite of \[\omega\circ p:X\times X\rightarrow 2\rightarrow[0,\infty]\] is a metric on $X$. Similarly, if   $c:X\times2^X\rightarrow 2$   satisfies  the conditions (C1)-(C4), then the composite of \[\omega\circ c:X\times2^X\rightarrow 2\rightarrow[0,\infty]\]  is an approach distance on $X$. These processes yield two full and faithful functors $\omega:{\sf Ord}\rightarrow{\sf Met}$ and   $\omega:{\sf Top}\rightarrow{\sf App}$. Both of them are denoted by the same symbol   since this will cause  no confusion. Approach spaces of the form $\omega(X)$  are said to be \emph{topological} \cite{RL97}.

 Write \[\iota:  [0,\infty]\rightarrow 2\] for the map that sends   $0$ in $[0,\infty]$ to $1$ in  the quantale $2$ and sends all $x$ in $(0,\infty]$ to $0$ in   $2$.

Given a metric $d:X\times X\rightarrow[0,\infty]$   on a set $X$,   the composite $\iota\circ d$ is an order on $X$, called the \emph{underlying order} of $d$. Given an approach distance $\delta:X\times2^X\rightarrow[0,\infty]$, the composite $\iota\circ \delta$ satisfies the conditions  (C1)-(C4), hence determines a topology on $X$, called the \emph{underlying topology} of $\delta$.
In this way, we obtain two (forgetful) functors: $\iota: {\sf Met}\rightarrow{\sf Ord}$ and $\iota: {\sf App}\rightarrow{\sf Top}$. It is easily seen that $\iota$ is right adjoint to $\omega$ (for both cases) and that the following diagrams are commutative: \[\bfig
\square<700,500>[{\sf Ord}` {\sf Top}` {\sf Met}  `{\sf App}; \Gamma` \omega ` \omega ` \Gamma]
\square(1500,0)/>`<-`<-`>/<700, 500>[{\sf Top} `{\sf Ord} `{\sf App}`{\sf Met} ;  \Omega `\iota`\iota`\Omega]
\efig\]

Both $\omega: (2, \wedge)\rightarrow([0,\infty]^{\rm op},+)$ and $\iota:([0,\infty]^{\rm op},+)\rightarrow (2, \wedge)$ are closed maps between quantales \cite{Ro90} (or, lax functors \cite{HST,Lawvere73} if quantales are treated as monoidal closed categories). So, both   $\omega:{\sf Ord}\rightarrow{\sf Met}$ and $\iota: {\sf Met}\rightarrow{\sf Ord}$ are examples of the change-of-base functors in enriched category theory \cite{Lawvere73}.
 The following conclusion shows  that the notion of sober approach spaces extends that of sober topological spaces.

\begin{prop}(\cite{BRC}) \label{top sober} A topological space $X$ is sober if and only if $\omega(X)$ is a sober approach space.   The underlying topology of a sober approach space is   sober.   \end{prop}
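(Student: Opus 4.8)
The plan is to deduce both assertions from a single bijection between the approach primes of $\omega(X)$ and the irreducible closed subsets of $X$. Writing $c$ for the closure operator of $X$, the approach distance of $\omega(X)$ is $\delta=\omega\circ c$, so it takes only the values $0$ and $\infty$, with $\delta(x,A)=0$ exactly when $x\in\overline A$. First I would record the shape of the regular functions of $\omega(X)$: a regular function is a map $\phi\colon X\to[0,\infty]$ with $\delta(x,A)\ge\phi(x)\ominus\sup_{a\in A}\phi(a)$, and since $\delta$ is $\{0,\infty\}$-valued this is vacuous for $x\notin\overline A$ and reads $\phi(x)\le\sup_A\phi$ for $x\in\overline A$; a short computation shows this holds for all $A$ if and only if every sublevel set $\{\phi\le t\}$ is closed, i.e. $\phi$ is lower semicontinuous. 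In particular $\delta(-,\{x\})$ is the indicator $\phi_{\overline{\{x\}}}$ that is $0$ on $\overline{\{x\}}$ and $\infty$ elsewhere, and for each closed $A$ the function $\phi_A:=\delta(-,A)$ is a two-valued indicator of this kind.

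The core claim is that $\phi\mapsto\{\phi=0\}$ is a bijection from the approach primes of $\omega(X)$ onto the irreducible closed sets, carrying $\delta(-,\{x\})$ to $\overline{\{x\}}$. One direction is routine: if $A$ is irreducible closed then $\phi_A$ is regular with $\inf\phi_A=0$, and for regular $\xi,\psi$ with $\min\{\xi,\psi\}\le\phi_A$ one has $\xi(a)=0$ or $\psi(a)=0$ for each $a\in A$; as $\{\xi=0\}$ and $\{\psi=0\}$ are closed and cover $A$, irreducibility gives $A\subseteq\{\xi=0\}$ (say), whence $\xi\le\delta(-,A)$ by regularity, so $\phi_A$ is prime. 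The difficult direction, which I expect to be \textbf{the main obstacle}, is that every approach prime $\phi$ is two-valued, i.e. takes no value in $(0,\infty)$. Since primeness makes $\phi$ meet-irreducible in $\mathcal{R}X$, it suffices to write any $\phi$ with a value $c=\phi(x_0)\in(0,\infty)$ as a binary minimum of two strictly larger regular functions. Using that $\mathcal{R}X$ is closed under suprema, binary minima and truncations $\min\{m,-\}$ (Proposition \ref{regular functions}), I would split into two cases. If $\phi$ has a value in $(0,c)$, choose $c'\in(0,c)$ below it and take $\xi_1=\phi\vee c'$ and $\xi_2=\max\{\phi,\min\{c,\delta(-,\{\phi\le c'\})\}\}$; a direct check gives $\min\{\xi_1,\xi_2\}=\phi$ with neither $\xi_i\le\phi$. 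If $\phi$ has no value in $(0,c)$, then $\inf\phi=0$ forces $A:=\{\phi=0\}\neq\emptyset$ and $\phi\ge c$ off $A$, so $\xi_1=\phi\vee c$ and $\xi_2=\phi_A$ work. Either way primeness is contradicted, so $\phi$ is two-valued.

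Once $\phi$ is two-valued it equals $\phi_A$ for $A=\{\phi=0\}$, which is nonempty (as $\inf\phi=0$) and closed; and feeding the indicators $\delta(-,C_i)$ of a closed cover of $A$ into the primeness condition shows $A$ is irreducible, exactly as in the easy direction. Since $\{\phi_A=0\}=A$ and $\phi=\phi_{\{\phi=0\}}$, the two assignments are mutually inverse, giving the bijection. Because it identifies the representable primes $\delta(-,\{x\})$ with the point-closures $\overline{\{x\}}$, the condition that every irreducible closed set is $\overline{\{x\}}$ for a unique $x$ is equivalent to the condition that every approach prime is $\delta(-,\{x\})$ for a unique $x$; this proves $X$ is sober if and only if $\omega(X)$ is sober.

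For the final sentence I would argue directly for an arbitrary sober approach space $(X,\delta)$. Given an irreducible closed set $A$ of the underlying topology, I claim $\delta(-,A)$ is an approach prime: it is regular with infimum $0$ since $A\neq\emptyset$, and if $\min\{\xi,\psi\}\le\delta(-,A)$ then evaluating on $A$ gives $A\subseteq\{\xi=0\}\cup\{\psi=0\}$ with both sets closed (a regular function of $\delta$ has closed sublevel sets, by the same computation as above), so irreducibility yields $\xi\le\delta(-,A)$ or $\psi\le\delta(-,A)$ by regularity. Sobriety then gives a unique $x$ with $\delta(-,A)=\delta(-,\{x\})$; thresholding at $0$ gives $\overline A=\overline{\{x\}}$, and since $A$ is closed and $\delta(-,A)=\delta(-,\overline A)$ (immediate from (A4)), we get $A=\overline{\{x\}}$, with uniqueness of $x$ inherited from the uniqueness in approach-sobriety. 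Hence the underlying topology is sober. I note this also re-proves the easy implication of the first assertion, since the underlying topology of $\omega(X)$ is $\iota\circ\omega\circ c=c$, i.e. $X$ itself.
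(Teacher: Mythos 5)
Your proof is correct, but it cannot be "the same as the paper's," because the paper gives no proof of this proposition at all: it is imported from \cite{BRC}, where it is established through the approach-frame machinery (regular function frames, their spectra, and the comparison of frame primes with approach primes). Your argument is a genuinely different, self-contained and elementary route, and it checks out. The two pivots are (i) the identification of the regular functions of $\omega(X)$ with the functions all of whose sublevel sets $\{\phi\le t\}$ are closed, and (ii) the decomposition argument showing every approach prime of $\omega(X)$ is $\{0,\infty\}$-valued; both verify correctly (in Case A, on $B=\{\phi\le c'\}$ one gets $\xi_1=c'$, $\xi_2=\phi$ and off $B$ one gets $\xi_1=\phi$, $\xi_2=\max\{\phi,c\}$, so $\min\{\xi_1,\xi_2\}=\phi$ with $\xi_1,\xi_2\not\le\phi$ witnessed at a point where $\phi<c'$ and at the point $x_1$ with $c'<\phi(x_1)<c$; Case B is even simpler), and the resulting bijection between approach primes and irreducible closed sets transfers sobriety in both directions, including uniqueness. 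Your last paragraph, handling an arbitrary sober $(X,\delta)$, rests on the genuinely useful observation that regular functions of any approach space have sublevel sets closed in the underlying topology; this is exactly what makes $\delta(-,A)$ prime for $A$ irreducible closed. The only terse spot is the final uniqueness claim: to "inherit" uniqueness you must argue that $A=\overline{\{x'\}}$ (closure in the underlying topology) forces $\delta(-,A)=\delta(-,\{x'\})$, so that $x'$ also represents the prime $\delta(-,A)$; this follows in one line from the fact you already state, $\delta(-,B)=\delta(-,\overline{B})$ by (A4), applied to $B=\{x'\}$, but it deserves to be written out. Compared with the frame-theoretic proof in \cite{BRC}, your approach buys concreteness and elementarity (everything happens inside $[0,\infty]^X$), at the cost of the reusable structural theory (spectra, sobrification as an adjunction) that the frame approach develops along the way.
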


\section{Sobrification of approach spaces}

The sobrification of an approach space $(X,\delta)$ is constructed in \cite{BRC} as the spectrum of the approach frame of regular functions of $(X,\delta)$. In this section, we present a description of this construction without resort to the notion of approach frames. This description will be useful in subsequent sections.

For an approach space  $(X,\delta)$, let
\[\widehat{X}=\{\phi\in \mathcal{R}X\mid \phi~\rm{is~an ~approach~prime}\}.\]
For each $\xi\in \mathcal{R}X$,   define a map  $\widehat{\xi}: \widehat{X}\rightarrow [0,\infty]$ by
\[  \widehat{\xi}(\phi)=\sup_{x\in X}d_L(\phi(x),\xi(x))=\inf\{\alpha\in[0,\infty]\mid \xi\leq \phi+\alpha\}.\]

\begin{lem}\label{widehat propoties} Let $(X,\delta)$ be an approach space.
\begin{enumerate}[(1)]
\item For all $\xi\in\mathcal{R}X$ and $a\in X$, $\widehat{\xi}(\delta(-,\{a\}))=\xi(a)$.
\item For all $\xi,\psi \in\mathcal{R}X$,   $\xi\leq \psi\Leftrightarrow \widehat{\xi}\leq\widehat{\psi}$.
\item For all $\xi  \in\mathcal{R}X$ and $\phi\in\widehat{X}$, $\widehat{\xi}(\phi)=0 \Leftrightarrow \xi\leq \phi$.
  \item For every subset $\{\xi_i\}_{i\in I}$ of $\mathcal{R}X$, $\widehat{\sup \xi_i}=\sup \widehat{\xi_i}$.
  \item For all $\xi, \psi\in \mathcal{R}X$, $\widehat{\min\{\xi, \psi\}}=\min\{\widehat{\xi},\widehat{\psi}\}$.
  \item For all $\xi\in\mathcal{R}X$ and $\alpha\in [0,\infty]$, $\widehat{\xi+\alpha}=\widehat{\xi}+\alpha$ and $\widehat{\xi\ominus \alpha}= \widehat{\xi}\ominus \alpha $.
\end{enumerate}\end{lem}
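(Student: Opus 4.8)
The plan is to read everything off the single identity $\widehat{\xi}(\phi)=\sup_{x\in X}d_L(\phi(x),\xi(x))=\overline{d}(\phi,\xi)$, where $\xi,\phi$ are regarded as weights of the metric space $(X,\Omega(\delta))$ (recall every regular function is such a weight). From this viewpoint items (1)--(3) are Yoneda-type statements, while (4)--(6) record that $\xi\mapsto\widehat{\xi}$ preserves the relevant structure. I would prove the six items in the order (1), (2), (3), (4), (6), (5), since the last one feeds on the preceding identities.

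For (1), since $\delta(x,\{a\})=\Omega(\delta)(x,a)$, the weight $\delta(-,\{a\})$ is exactly the representable weight $d(-,a)$ of $d:=\Omega(\delta)$, so $\widehat{\xi}(\delta(-,\{a\}))=\overline{d}(d(-,a),\xi)=\xi(a)$ by the Yoneda equality $\overline{d}(d(-,x),\phi)=\phi(x)$ quoted in the excerpt; concretely, the weight inequality $\xi(x)\leq\xi(a)+\delta(x,\{a\})$ gives $\xi(x)\ominus\delta(x,\{a\})\leq\xi(a)$ for every $x$, with equality at $x=a$ by (A1). Item (2) then follows: the forward implication is pointwise monotonicity of $\ominus$ in its second slot, and the backward implication is obtained by evaluating $\widehat{\xi}\leq\widehat{\psi}$ at the primes $\delta(-,\{a\})$ and invoking (1) to recover $\xi(a)\leq\psi(a)$. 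Item (3) is immediate from $\widehat{\xi}(\phi)=\sup_x(\xi(x)\ominus\phi(x))$ together with the nonnegativity of $\ominus$: the supremum is $0$ iff each term vanishes iff $\xi\leq\phi$.

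Items (4) and (6) are arithmetic. For (4) I would use that $\ominus$ distributes over suprema in its first argument, $(\sup_i a_i)\ominus b=\sup_i(a_i\ominus b)$, and then interchange the two suprema. The second half of (6) needs no primeness at all: the pointwise identity $(a\ominus\alpha)\ominus b=a\ominus(b+\alpha)=(a\ominus b)\ominus\alpha$ shows that both $\widehat{\xi\ominus\alpha}(\phi)$ and $\widehat{\xi}(\phi)\ominus\alpha$ equal $\sup_x\bigl(\xi(x)\ominus(\phi(x)+\alpha)\bigr)$. The first half, $\widehat{\xi+\alpha}=\widehat{\xi}+\alpha$, is the first place a defining property of approach primes intervenes: the inequality $\widehat{\xi+\alpha}(\phi)\leq\widehat{\xi}(\phi)+\alpha$ holds for any weight, but the reverse inequality uses the normalization $\inf_x\phi(x)=0$ built into being an approach prime; working with the infimum description $\widehat{\xi}(\phi)=\inf\{\beta\mid\xi\leq\phi+\beta\}$, one checks that for any $\beta<\widehat{\xi}(\phi)+\alpha$ there is an $x$ (chosen with $\phi(x)$ small when $\beta<\alpha$) witnessing $\xi(x)+\alpha>\phi(x)+\beta$, so $\xi+\alpha\not\leq\phi+\beta$.

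The main obstacle is (5), the only item genuinely using the splitting axiom of an approach prime, and it must be bootstrapped from (2), (3) and (6). The inequality $\widehat{\min\{\xi,\psi\}}\leq\min\{\widehat{\xi},\widehat{\psi}\}$ follows from monotonicity (2). For the reverse, fix $\phi\in\widehat{X}$, set $\alpha=\widehat{\min\{\xi,\psi\}}(\phi)$, and rewrite the resulting relation $\min\{\xi,\psi\}\leq\phi+\alpha$ as $\min\{\xi\ominus\alpha,\psi\ominus\alpha\}=\min\{\xi,\psi\}\ominus\alpha\leq\phi$. Since $\xi\ominus\alpha$ and $\psi\ominus\alpha$ are regular by (R3), primeness of $\phi$ forces, say, $\xi\ominus\alpha\leq\phi$; then (3) gives $\widehat{\xi\ominus\alpha}(\phi)=0$, and (6) rewrites this as $\widehat{\xi}(\phi)\ominus\alpha=0$, i.e. $\widehat{\xi}(\phi)\leq\alpha$, whence $\min\{\widehat{\xi}(\phi),\widehat{\psi}(\phi)\}\leq\alpha=\widehat{\min\{\xi,\psi\}}(\phi)$. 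The case $\alpha=\infty$ is trivial. I would flag this interplay---reducing the ``min'' statement (5) to the prime-splitting axiom via the translation identities of (6)---as the crux of the lemma.
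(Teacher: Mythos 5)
Your proposal is correct and follows essentially the same route as the paper: for item (1) you give the same two-inequality Yoneda-style argument (lower bound by evaluating at $x=a$ via (A1), upper bound from the weight/contraction inequality), and for item (5) --- the only other item the paper actually proves, the rest being left as routine --- your rewriting of $\min\{\xi,\psi\}\le\phi+\alpha$ as $\min\{\xi\ominus\alpha,\psi\ominus\alpha\}\le\phi$ followed by (R3) and prime-splitting is exactly the paper's argument, merely rerouted through items (3) and (6). Your observation that the first half of (6) genuinely requires the normalization $\inf_{x\in X}\phi(x)=0$ of approach primes is correct and a nice supplementary point, but it does not change the substance.
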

\begin{proof} We check (1)  and (5) for example.

(1) On one hand,  by definition of $\widehat{\xi}$, \[\widehat{\xi}(\delta(-,\{a\}))=\sup_{x\in X}d_L(\delta(x,\{a\}),\xi(x))\geq d_L(\delta(a,\{a\}),\xi(a))=\xi(a).\] On the other hand,  since $\xi:(X,\delta)\rightarrow\mathbb{P}$ is a contraction, it follows that $\xi(x)-\delta(x,\{a\})\leq \xi(a)$  for all $x\in X$, hence
$$\widehat{\xi}(\delta(-,\{a\}))=\sup_{x\in X}d_L(\delta(x,\{a\}),\xi(x))\leq \xi(a).$$

(5) That $\widehat{\min\{\xi, \psi\}}\leq\min\{\widehat{\xi},\widehat{\psi}\}$ is obvious. It remains to check that $\min\{\widehat{\xi}(\phi),\widehat{\psi}(\phi)\}\leq\widehat{\min\{\xi, \psi\}}(\phi)$ for all $\phi\in \widehat{X}$.
By definition, \[\widehat{\min\{\xi, \psi\}}(\phi)=\inf\{\alpha\mid \min\{\xi, \psi\}\leq \phi+\alpha\}.\]  Since $\phi$ is an approach prime,  for each $\alpha\in[0,\infty]$, if $\min\{\xi,\psi\}\leq \phi+\alpha$, then either $\xi\ominus\alpha\leq \phi $ or $\psi\ominus\alpha\leq \phi$, it follows that either $\xi\leq \phi+\alpha$ or $\psi\leq \phi+\alpha$, so, $\min\{\widehat{\xi}(\phi),\widehat{\psi}(\phi)\}\leq \alpha$, hence $\min\{\widehat{\xi}(\phi),\widehat{\psi}(\phi)\} \leq\widehat{\min\{\xi, \psi\}}(\phi).$
\end{proof}

Given an approach space $(X,\delta)$,  the set $\{\widehat{\xi}\mid \xi\in\mathcal{R}X\}$ satisfies the conditions in Proposition \ref{regular functions}, hence it determines an approach distance $\widehat{\delta}$  on $\widehat{X}$ via \begin{equation}\label{widehat distance*}\widehat{\delta}(\phi,A)= \sup\{\widehat{\psi}(\phi)\mid \psi\in\mathcal{R}X,\forall \xi\in A,\widehat{\psi}(\xi)=0\} \end{equation} for all $\phi\in\widehat{X}$ and $A\subseteq \widehat{X}$. In particular,  for all $\phi,\xi\in \widehat{X}$, \begin{equation}\label{widehat distance}\widehat{\delta}(\phi,\{\xi\})= \sup\{\widehat{\psi}(\phi)\mid \psi\in\mathcal{R}X, \widehat{\psi}(\xi)=0\} =\widehat{\xi}(\phi). \end{equation}

Define a map
\[ \eta_X:(X,\delta)\rightarrow (\widehat{X},\widehat{\delta}) \] by $\eta_X(x)=\delta(-,\{x\})$. Then for all $x\in X$ and $A\subseteq X$, \begin{align*}\widehat{\delta}(\eta_X(x), \eta_X(A))&= \sup\{\widehat{\psi}(\delta(-,\{x\}))\mid \psi\in \mathcal{R}X, \forall a\in A, \widehat{\psi}(\delta(-,\{a\}))=0\} \\ &= \sup\{\psi(x)\mid \psi\in \mathcal{R}X, \forall a\in A, \psi(a)=0\}\\ &=\delta(x,A).\end{align*} This shows that $\eta_X:(X,\delta)\rightarrow (\widehat{X},\widehat{\delta})$ is an  isometric map.

It is clear that $(X,\delta)$ is sober if and only if $\eta_X$ is bijective, hence an isomorphism in  ${\sf App}$.

\begin{thm}\label{sobrification}Let $(X,\delta)$ be  an approach space. \begin{enumerate}[(1)]  \item $(\widehat{X},\widehat{\delta})$ is a sober approach space.
\item For each contraction $f$ from $(X,\delta)$ to a sober approach space $(Y,\rho)$, there is a unique contraction $\overline{f}:(\widehat{X},\widehat{\delta})\rightarrow (Y,\rho)$ such that $f=\overline{f}\circ\eta_X$. \end{enumerate} \end{thm}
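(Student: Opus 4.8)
The engine for both parts is Lemma \ref{widehat propoties} together with the construction of $\widehat\delta$. By Proposition \ref{regular functions} the regular functions of $(\widehat X,\widehat\delta)$ are exactly $\mathcal R\widehat X=\{\widehat\xi\mid\xi\in\mathcal R X\}$, and Lemma \ref{widehat propoties} says that $\xi\mapsto\widehat\xi$ is an order isomorphism $\mathcal R X\to\mathcal R\widehat X$ commuting with arbitrary sups, finite mins, and the shifts $(-)+\alpha$ and $(-)\ominus\alpha$. I will also use the identity $\widehat\delta(-,\{\xi\})=\widehat\xi$ for $\xi\in\widehat X$ from \eqref{widehat distance}, which says that the representable regular function at a point $\xi$ of $\widehat X$ is precisely $\widehat\xi$. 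Thus to prove (1) it suffices to show that every approach prime of $\widehat X$ has the form $\widehat\xi$ with $\xi$ an approach prime of $X$.

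For (1), let $\Phi$ be an approach prime of $(\widehat X,\widehat\delta)$ and write $\Phi=\widehat{\xi_0}$ with $\xi_0\in\mathcal R X$. First I would transfer primeness: if $\min\{\xi,\psi\}\le\xi_0$, then by Lemma \ref{widehat propoties}(5),(2) we get $\min\{\widehat\xi,\widehat\psi\}=\widehat{\min\{\xi,\psi\}}\le\widehat{\xi_0}=\Phi$, so primeness of $\Phi$ forces $\widehat\xi\le\Phi$ or $\widehat\psi\le\Phi$, i.e. $\xi\le\xi_0$ or $\psi\le\xi_0$. For the condition $\inf_x\xi_0(x)=0$ I would use a constant-function trick: with $c:=\inf_x\xi_0(x)$ the constant $c$ is regular and $\le\xi_0$, and since every $\phi\in\widehat X$ satisfies $\inf_x\phi(x)=0$ one computes $\widehat c(\phi)=c$; hence $c=\inf_\phi\widehat c(\phi)\le\inf_\phi\Phi(\phi)=0$. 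So $\xi_0$ is an approach prime of $X$, i.e. $\xi_0\in\widehat X$, and $\Phi=\widehat{\xi_0}=\widehat\delta(-,\{\xi_0\})$; uniqueness of $\xi_0$ is immediate from Lemma \ref{widehat propoties}(2).

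For (2), fix a contraction $f:(X,\delta)\to(Y,\rho)$ with $Y$ sober, and for $\phi\in\widehat X$ define $\Lambda_\phi:\mathcal R Y\to[0,\infty]$ by $\Lambda_\phi(\chi)=\widehat{\chi\circ f}(\phi)$; this is legitimate since $\chi\circ f\in\mathcal R X$ by Proposition \ref{contraction by regular frame}. Because $\chi\mapsto\chi\circ f$ is computed pointwise and $\xi\mapsto\widehat\xi(\phi)$ respects sups, mins and shifts (Lemma \ref{widehat propoties}(4)--(6)), the map $\Lambda_\phi$ preserves arbitrary sups, finite mins and shifts, i.e. it is a ``point'' of the regular functions of $Y$. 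I would then set $\psi_\phi:=\sup\{\chi\ominus\Lambda_\phi(\chi)\mid\chi\in\mathcal R Y\}$, which lies in $\mathcal R Y$ by (R1),(R3). Writing $\widehat{(\cdot)}$ also for the corresponding operation on $(Y,\rho)$, the preservation properties of $\Lambda_\phi$ yield $\Lambda_\phi(\psi_\phi)=0$ and then $\widehat\chi(\psi_\phi)=\Lambda_\phi(\chi)$ for all $\chi$. Primeness of $\psi_\phi$ follows because $\min\{\chi_1,\chi_2\}\le\psi_\phi$ gives $\min\{\Lambda_\phi(\chi_1),\Lambda_\phi(\chi_2)\}=\Lambda_\phi(\min\{\chi_1,\chi_2\})\le\Lambda_\phi(\psi_\phi)=0$, hence some $\chi_i\le\psi_\phi$; and $\inf_y\psi_\phi(y)=0$ follows by applying $\Lambda_\phi$ to the constant $c:=\inf_y\psi_\phi(y)\le\psi_\phi$, which gives simultaneously $\Lambda_\phi(c)=\widehat c(\psi_\phi)=0$ and $\Lambda_\phi(c)=\widehat{c}(\phi)=c$ (using $\inf_x\phi(x)=0$). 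Thus $\psi_\phi$ is an approach prime of the sober space $Y$, so there is a unique $y_\phi\in Y$ with $\psi_\phi=\rho(-,\{y_\phi\})$, and I set $\overline f(\phi):=y_\phi$.

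Finally I would package the verification through the single identity $\chi\circ\overline f=\widehat{\chi\circ f}$ for every $\chi\in\mathcal R Y$, which holds since $\chi(\overline f(\phi))=\widehat\chi(\rho(-,\{y_\phi\}))=\widehat\chi(\psi_\phi)=\Lambda_\phi(\chi)$ by Lemma \ref{widehat propoties}(1) applied in $Y$. As $\widehat{\chi\circ f}\in\mathcal R\widehat X$, Proposition \ref{contraction by regular frame} shows $\overline f$ is a contraction; evaluating the identity at $\eta_X(x)$ and using Lemma \ref{widehat propoties}(1) gives $\chi(\overline f(\eta_X(x)))=(\chi\circ f)(x)=\chi(f(x))$ for all $\chi$, whence $\overline f\circ\eta_X=f$ once we note that a sober space has separated specialization metric. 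The same identity yields uniqueness: any contraction $g$ with $g\circ\eta_X=f$ satisfies $\chi\circ g=\widehat{\chi\circ f}=\chi\circ\overline f$ (comparing values on $\eta_X(X)$ via Lemma \ref{widehat propoties}(1)), so $g=\overline f$ by separatedness. I expect the central obstacle to be the third paragraph: the naive pushforward $y\mapsto\inf_{x}(\phi(x)+\rho(y,\{f(x)\}))$ is the obvious candidate but is merely a weight of $(Y,\Omega\rho)$ and need not be a regular function when $Y$ is not metric, since regular functions are closed under finite mins and sups but not under arbitrary infima. Passing instead to the regularization $\psi_\phi$ and exploiting that $\Lambda_\phi$ is a genuine point is what repairs this, and confirming that $\psi_\phi$ is an honest approach prime, especially $\inf_y\psi_\phi(y)=0$, is the step that must be handled with care.
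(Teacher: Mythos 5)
Your proof is correct and follows the paper's skeleton---part (1) reduces sobriety of $(\widehat X,\widehat\delta)$ to the implication ``$\widehat\xi$ prime $\Rightarrow$ $\xi$ prime'', and part (2) builds from each $\phi\in\widehat X$ an approach prime of $(Y,\rho)$ and invokes sobriety of $Y$---but both key verifications are organized differently. In (1), for $\inf_{x}\xi(x)=0$ the paper chases $\varepsilon$'s (pick $\phi\in\widehat X$ with $\widehat\xi(\phi)<\varepsilon$, then $x_0$ with $\phi(x_0)<\varepsilon$, so $\xi(x_0)<2\varepsilon$), while your constant-function trick $\widehat{\underline{c}}(\phi)=c$ does it in one step; just record that constant functions are regular (immediate from (R1) and (R3)). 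In (2), the paper sets $f^\dag(\phi)=\sup\{\psi\in\mathcal{R}Y\mid \psi\circ f\le\phi\}$ and proves primeness, the identity $\psi\circ\overline f=\widehat{\psi\circ f}$, the factorization and uniqueness by separate two-sided estimates; your $\psi_\phi=\sup\{\chi\ominus\Lambda_\phi(\chi)\mid\chi\in\mathcal{R}Y\}$ with $\Lambda_\phi(\chi)=\widehat{\chi\circ f}(\phi)$ is in fact the same function as $f^\dag(\phi)$ (each $\chi\ominus\Lambda_\phi(\chi)$ satisfies $(\chi\ominus\Lambda_\phi(\chi))\circ f\le\phi$ because the infimum defining $\Lambda_\phi(\chi)$ is attained, and conversely $\psi\circ f\le\phi$ forces $\Lambda_\phi(\psi)=0$, hence $\psi\le\psi_\phi$), but your packaging through the sup-, min- and shift-preserving functional $\Lambda_\phi$ yields the master identity $\widehat\chi(\psi_\phi)=\Lambda_\phi(\chi)$, from which primeness, the contraction identity $\chi\circ\overline f=\widehat{\chi\circ f}$, the equation $\overline f\circ\eta_X=f$ and uniqueness all follow uniformly. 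This is essentially the ``points of the approach frame $\mathcal{R}Y$'' perspective of \cite{BRC}, reconstructed without naming approach frames, and it is arguably slicker than the paper's hands-on estimates; the modest cost is that your route leans twice on the fact that a sober approach space has a separated specialization metric---a true statement, following in one line from (A4) and the uniqueness clause in the definition of sobriety, but one you should prove rather than merely ``note'', since the paper never needs to isolate it.
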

\begin{proof} (1) We must show that each approach prime  of $(\widehat{X},\widehat{\delta})$ is of the form $\widehat{\delta}(-,\{\phi\})$ for a unique approach prime $\phi$ of $(X,\delta)$. Uniqueness of $\phi$ is clear since $$\widehat{\delta}(\delta(-,x),\{\phi\})=\phi(x)$$  for all $x\in X$. It remains to check the existence.

By definition, each approach prime (indeed, each regular function) on $(\widehat{X},\widehat{\delta})$ is of the form $\widehat{\xi}$ for some $\xi\in\mathcal{R}X$.
Given an approach prime $\widehat{\xi}$ of $(\widehat{X},\widehat{\delta})$, if we could show that   $\xi$ is an approach prime of $(X,\delta)$, then we would obtain $\widehat{\xi}=\widehat{\delta}(-,\{\xi\})$  by virtue of Equation (\ref{widehat distance}), proving the existence.  So, it suffices to show that if $\widehat{\xi}$ is an approach prime  of $(\widehat{X},\widehat{\delta})$, then $\xi$ is an approach prime   of $(X,\delta)$.

(a) $\inf_{x\in X}\xi(x)=0$. Given $\varepsilon>0$, since $\widehat{\xi}$ is an approach prime, there is   $\phi\in\widehat{X}$ such that \[ \widehat{\xi}(\phi)=\sup_{x\in X}d_L(\phi(x),\xi(x))<\varepsilon.\] Since $\phi$ is an approach prime of $(X,\delta)$, there exists some $x_0$ such that $\phi(x_0)<\varepsilon$. Thus, $\xi(x_0)<2\varepsilon$, so, $\inf_{x\in X}\xi(x)=0$ by arbitrariness of $\varepsilon$.

(b) Suppose that $\phi, \psi\in\mathcal{R}X$  and that $\min\{\phi, \psi\}\leq \xi$. Then  $\min\{\widehat{\phi},\widehat{\psi}\}=\widehat{\min\{\phi, \psi\}}\leq \widehat{\xi}$. Since $\widehat{\xi}$ is an approach prime,   either $\widehat{\phi}\leq \widehat{\xi}$ or $\widehat{\psi}\leq \widehat{\xi}$, it follows that either $\phi\leq \xi$ or $\psi\leq \xi$ by Lemma \ref{widehat propoties}(2).

Therefore, $\xi\in\widehat{X}$, as desired.

(2) Suppose $(Y,\rho)$ is a sober approach space, $f:(X,\delta)\rightarrow(Y,\rho)$ is a contraction.  We show that there is a unique contraction $\overline{f}:(\widehat{X},\widehat{\delta})\rightarrow (Y,\rho)$ such that $f=\overline{f}\circ\eta_X$.

\textbf{Existence}. For each $\phi\in \widehat{X}$, let \[ f^\dag(\phi) =\sup\{\psi\in\mathcal{R}Y\mid\psi\circ f\leq \phi\}.\] That is, $\psi\leq  f^\dag(\phi)\iff \psi\circ f\leq \phi$ for all $\psi\in\mathcal{R}Y$. We claim that $ f^\dag(\phi)$ is an approach prime of $(Y,\rho)$.

Given $\varepsilon>0$, there exists some $x_\varepsilon\in X$ such that $\phi(x_\varepsilon)<\varepsilon$. Let $y_\varepsilon=f(x_\varepsilon)$. Then $\psi(y_\varepsilon)< \varepsilon$ whenever $\psi\circ f\leq \phi$, it follows that $ f^\dag(\phi)(y_\varepsilon)\leq\varepsilon$. Therefore, $\inf_{y\in Y} f^\dag(\phi)(y)=0$.

Suppose that $\psi,\xi\in\mathcal{R}Y$ and $\min\{\psi,\xi\}\leq f^\dag(\phi)$. Since \[ \min\{\psi\circ f,\xi\circ f\}=\min\{\psi,\xi\}\circ f \leq\phi,\] we obtain that either $\psi\circ f\leq \phi$ or $\xi\circ f\leq \phi$, hence either $\psi\leq f^\dag(\phi)$ or $\xi\leq f^\dag(\phi)$.

Therefore, $ f^\dag(\phi)$ is an approach prime of $(Y,\rho)$. Since $(Y,\rho)$ is sober, there is a unique $y\in Y$ such that $ f^\dag(\phi)=\rho(-,\{y\})$. Define $\overline{f}(\phi)$ to be this $y$. We claim that $\overline{f}:\widehat{X}\rightarrow Y$ satisfies the requirement.

(a) $\overline{f}:(\widehat{X},\widehat{\delta})\rightarrow (Y,\rho)$ is a contraction.   By Proposition \ref{contraction by regular frame}, it is sufficient to show that for each $\psi\in\mathcal{R}Y$, $\psi\circ \overline{f}$ is a regular function of $(\widehat{X},\widehat{\delta})$. Since $f:(X,\delta)\rightarrow(Y,\rho)$ is a contraction, $\psi\circ f$ is a regular function of $(X,\delta)$. If we could show that $\psi\circ \overline{f}= \widehat{\psi\circ f}$, then $\psi\circ \overline{f}$ would be a regular function of $(\widehat{X},\widehat{\delta})$, as desired.

For each $\phi\in\widehat{X}$,
 \begin{align*}
 \psi\circ \overline{f}(\phi) &=\widehat{\psi}(\rho(-,\{\overline{f}(\phi)\})) &({\rm Lemma}~\ref{widehat propoties}(1))\\
 & = \widehat{\psi}( f^\dag(\phi)) &({\rm definition~of}~\overline{f})\\
 &= \inf\{\alpha\mid \psi\leq  f^\dag(\phi) +\alpha\} \\
 &= \inf\{\alpha\mid \psi\ominus \alpha\leq f^\dag(\phi) \} \\
 & = \inf\{\alpha\mid \psi\circ f\ominus \alpha\leq \phi  \} \\
 & = \inf\{\alpha\mid \psi\circ f\leq \phi+\alpha\} \\
 & = \widehat{\psi\circ f}(\phi). \end{align*}

(b)  $f=\overline{f}\circ\eta_X$. For each $x\in X$, $\overline{f}\circ\eta_X(x)$ is the unique element in $Y$ such that $\rho(-,\{\overline{f}\circ\eta_X(x)\}) =f^\dag(\delta(-,\{x\}))$, so, it suffices to check that $\rho(-,\{f(x)\}) =f^\dag(\delta(-,\{x\}))$. On one hand, since for any $x'\in X$, \[ \rho(-,\{f(x)\})\circ f(x')= \rho(f(x'),\{f(x)\}) \leq \delta(x',\{x\}),\] it follows that $\rho(-,\{f(x)\})\circ f\leq  \delta(-,\{x\})$, hence $\rho(-,\{f(x)\})\leq f^\dag(\delta(-,\{x\}))$. On the other hand, suppose that $\psi\in\mathcal{R}Y$ and $\psi\circ f\leq \delta(-,\{x\})$. Then for any $y\in Y$, \begin{align*}\psi(y)\leq\psi\circ f(x) +\rho(y,\{f(x)\}) \leq \delta(x,\{x\})+\rho(y,\{f(x)\})=\rho(y,\{f(x)\}), \end{align*} showing that $f^\dag(\delta(-,\{x\}))\leq \rho(-,\{f(x)\})$.

\textbf{Uniqueness}. Suppose $g: (\widehat{X},\widehat{\delta})\rightarrow(Y,\rho)$ is a contraction with $g\circ\eta_X=f$. We show that  for each $\phi\in \widehat{X}$, $g(\phi) =\overline{f}(\phi)$, i.e., $ f^\dag(\phi)=\rho(-,\{g(\phi)\})$.

On one hand, for each $x\in X$, since $g$ is a contraction, one has $$\rho(-,\{g(\phi)\})\circ f(x)=\rho(g\circ\eta_X(x),\{g(\phi)\})\leq \widehat{\delta}(\eta_X(x),\{\phi\})= \widehat{\phi}(\eta_X(x))=\phi(x),$$
so,  $\rho(-,\{g(\phi)\})\circ f\leq \phi$, hence $\rho(-,\{g(\phi)\})\leq  f^\dag(\phi)$.

On the other hand, for every $\psi\in\mathcal{R}Y$ with $\psi\circ f\leq \phi$, since $g$ is a contraction,  $\psi\circ g$ is a regular function of $(\widehat{X},\widehat{\delta})$, hence there exists some $\xi\in \mathcal{R}X$ such that $\psi\circ g=\widehat{\xi}$. Then  \begin{align*}\psi\circ f\leq \phi &~\Rightarrow~ \psi\circ g\circ\eta_X\leq\phi \\ &~\Rightarrow~  \widehat{\xi}\circ\eta_X\leq\phi \\ &~\Rightarrow~ \forall x\in X, \xi(x)\leq\phi(x) \\ &~\Rightarrow~ \widehat{\xi}(\phi)=0. \end{align*}
Since $\psi:(Y,\rho)\rightarrow\mathbb{P}$ is a contraction, it follows that for each $y\in Y$,
$$\psi(y)\leq \psi\circ g(\phi)+\rho(y,\{g(\phi)\})= \widehat{\xi}(\phi)+\rho(y,\{g(\phi)\}) =\rho(y,\{g(\phi)\}).$$ This proves the inequality $ f^\dag(\phi) \leq\rho(-,\{g(\phi)\})$.   \end{proof}

Let ${\sf SobApp}$ denote the full subcategory of ${\sf App}$ consisting of sober approach spaces. The  universal property of $(\widehat{X},\widehat{\delta})$ gives rise to a functor
$$s:{\sf App}\rightarrow{\sf SobApp},\quad s(X,\delta)= (\widehat{X},\widehat{\delta})$$ that is left adjoint to the inclusion functor ${\sf SobApp}\rightarrow{\sf App}$. The sober approach space $s(X,\delta)$ is called the \emph{sobrification} of $(X,\delta)$.

\section{Yoneda completion of metric spaces}
A net $\{x_\lam\}$ in a metric space $(X,d)$ is forward Cauchy \cite{BvBR1998,Wagner97} if \[\inf_\lam\sup_{\nu\geq\mu\geq\lam}d(x_\mu,x_\nu)=0.\] 

\begin{defn}(\cite{BvBR1998,Wagner97})
Let $\{x_\lam\}$  be a net in a metric space $(X,d)$. An element   $x\in X$  is a Yoneda limit (a.k.a. liminf) of $\{x_\lam\}$ if  for all $y\in X$, \[d(x,y)= \inf_\lambda\sup_{\sigma\geq\lambda}d(x_{\sigma},y).\] \end{defn}

Yoneda limits are not necessarily unique. However, if both $x$ and $y$ are   Yoneda limit  of a  net $\{x_\lambda\}$, then $d(x,y)=d(y,x)=0$. So, Yoneda limits in separated metric spaces are unique.

\begin{defn} (\cite{BvBR1998,Wagner97}) A metric space  is Yoneda complete if each forward Cauchy net in it has a Yoneda limit. \end{defn}
A non-expansive map $f:(X,d)\rightarrow(Y,p)$ is \emph{Yoneda continuous} if it preserves Yoneda limits in the sense that if $a$ is a Yoneda limit of a forward Cauchy net $\{x_\lam\}$ then $f(a)$ is a Yoneda limit of $\{f(x_\lam)\}$.

\begin{exmp}\label{d_L} Consider the metric space $([0,\infty],d_L)$.  If $\{x_\lambda\}$ is a forward Cauchy net in $([0,\infty],d_L)$, then  $\{x_\lambda\}$ is either an eventually constant net with value $\infty$ or eventually a Cauchy net of real numbers in the usual sense.  In the first case, $\infty$ is a Yoneda limit of $\{x_\lambda\}$; in the second case, the limit of the Cauchy net $\{x_\lambda\}$ is a Yoneda limit of $\{x_\lambda\}$. Thus, $([0,\infty],d_L)$ is Yoneda complete.

It is easily seen that for each forward Cauchy net $\{x_\lambda\}$  in a metric space $(X,d)$, $\{d(x,x_\lam)\}$ is a forward Cauchy net  in $([0,\infty],d_L)$ for all $x\in X$. In particular, \begin{equation}\label{forward cauchy nets converge} \inf_\lam\sup_{\si\geq\lam}d(x,x_\si)= \sup_\lam\inf_{\si\geq\lam}d(x,x_\si).\end{equation} \end{exmp}

\begin{exmp}\label{d_R} Consider the metric space $([0,\infty],d_R)$.  A net $\{x_\lambda\}$ in $[0,\infty]$ is almost increasing if for each $\varepsilon>0$, there is some $\lam$ such that $x_\mu-x_\nu\leq\varepsilon$ whenever $\nu\geq\mu\geq \lam$. It is clear that every almost increasing net is forward Cauchy in $([0,\infty],d_R)$. Furthermore, if a net $\{x_\lambda\}$ is   forward Cauchy  in $([0,\infty],d_R)$, then $\{x_\lambda\}$ is either an almost increasing net that tends to infinity or a Cauchy net in the usual sense.  In the first case, $\infty$ is a Yoneda limit of $\{x_\lambda\}$ in $([0,\infty],d_R)$; in the second case, the limit of the Cauchy net $\{x_\lambda\}$ is a Yoneda limit of $\{x_\lambda\}$ in $([0,\infty],d_R)$. Thus, $([0,\infty],d_R)$ is Yoneda complete.

 The metric space $([0,\infty),d_R)$ is not  Yoneda complete, but $([0,\infty),d_L)$ is.
\end{exmp}

\begin{prop}The underlying order of a Yoneda complete metric space is directed complete. \end{prop}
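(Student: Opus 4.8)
The plan is to realize an arbitrary directed subset of the underlying order as a forward Cauchy net and to take its Yoneda limit as the join. Recall that in the underlying order $\iota\circ d$ one has $x\leq y$ exactly when $d(x,y)=0$. So let $D\subseteq X$ be a (nonempty) directed subset of the underlying order, and regard $D$ as a net indexed by $(D,\leq)$ via the identity assignment $e\mapsto e$. First I would verify that this net is forward Cauchy: whenever $e_1\leq e_2$ in $D$ we have $d(e_1,e_2)=0$ by definition of the underlying order, so $\sup_{e_2\geq e_1\geq e}d(e_1,e_2)=0$ for every $e\in D$ and the forward Cauchy condition holds trivially. By Yoneda completeness the net then has a Yoneda limit $a$, characterized by $d(a,y)=\inf_{e\in D}\sup_{e'\geq e}d(e',y)$ for all $y\in X$. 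The whole argument reduces to showing that this $a$ is a least upper bound of $D$ in the underlying order.

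The least-upper-bound half is immediate from the defining formula. If $b$ is any upper bound of $D$, i.e. $d(e',b)=0$ for every $e'\in D$, then substituting $y=b$ gives $d(a,b)=\inf_{e}\sup_{e'\geq e}d(e',b)=0$, that is $a\leq b$. So every upper bound of $D$ dominates $a$, and it only remains to see that $a$ is itself an upper bound.

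The genuinely delicate point, and the one I expect to be the main obstacle, is verifying that $a$ is an upper bound at all, i.e. that $d(e_0,a)=0$ for each fixed $e_0\in D$. The difficulty is that the Yoneda condition controls the distance $d(a,y)$ \emph{out of} the limit, whereas being an upper bound requires controlling the distance $d(e_0,a)$ \emph{into} the limit, and these are not symmetric. The device that overcomes this is to feed the limit back into its own defining equation: setting $y=a$ and using $d(a,a)=0$ yields
\[\inf_{e\in D}\sup_{e'\geq e}d(e',a)=0.\]
Now for any $\varepsilon>0$ I would choose $e_\varepsilon\in D$ with $\sup_{e'\geq e_\varepsilon}d(e',a)<\varepsilon$, invoke directedness to pick $e^*\in D$ with $e^*\geq e_0$ and $e^*\geq e_\varepsilon$, and combine $d(e_0,e^*)=0$ (since $e_0\leq e^*$) with $d(e^*,a)<\varepsilon$ (since $e^*\geq e_\varepsilon$) through the triangle inequality to obtain $d(e_0,a)\leq d(e_0,e^*)+d(e^*,a)<\varepsilon$. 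Letting $\varepsilon\to 0$ forces $d(e_0,a)=0$, so $e_0\leq a$. Together with the previous paragraph this shows that $a$ is a least upper bound of $D$, and since $D$ was arbitrary the underlying order is directed complete.
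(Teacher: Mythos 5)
Your proof is correct and follows essentially the same route as the paper's: regard $D$ as a net indexed by itself, observe it is trivially forward Cauchy, take a Yoneda limit $a$, extract the upper-bound property by plugging $y=a$ into the Yoneda equation and using directedness with the triangle inequality, and get minimality directly from the defining formula. The only difference is the order of the two halves, which is immaterial.
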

\begin{proof} Let $(X,d)$ be a Yoneda complete metric space, $\leq_d$  be the underlying order of $(X, d)$, and $D$ be  a directed subset in $(X,\leq_d)$. Regard $D$ as a net $\{x\}_{x\in D}$ in $(X,d)$  in the obvious way. By definition of $\leq_d$ we have $d(x,y)=0$  whenever $x\leq_d y$. Thus, $\{x\}_{x\in D}$ is a forward Cauchy net in $(X,d)$. Let $a$ be a Yoneda limit of $\{x\}_{x\in D}$. We show that $a$ is a join of $D$ in $(X,\leq_d)$.

Since $a$ is a Yoneda limit of $\{x\}_{x\in D}$, it holds that $d(a,y)=\inf_{x\in D}\sup_{z\geq_d x}d(z,y)$ for each $y\in X$, in particular, $\inf_{x\in D}\sup_{z\geq_d x}d(z,a)=d(a,a)=0$.
Thus, for each $\varepsilon>0$ there exists $x_\varepsilon\in D$ such that $d(x,a)<\varepsilon$ for all $x\in D$ with $x\geq_d x_\varepsilon$.

For a fixed $x\in D$,  let $y$ be an upper bound of $x$ and $x_\varepsilon$ in $D$. Then $$d(x,a)\leq d(x,y)+d(y,a)\leq \varepsilon.$$ Therefore, $d(x,a)=0$ by arbitrariness of $\varepsilon$, showing that $a$ is an upper bound of $D$.

Let $z$ be another upper bound of $D$. Then for all $y\in D$ we have $d(y,z)=0$. So, $$d(a,z)=\inf_{x\in D}\sup_{y\geq_d x}d(y, z)=0,$$ showing that $a\leq_d z$. This proves that $a$ is a join of $D$ in $(X,\leq_d)$.
\end{proof}

For each weight $\phi$ and each coweight $\psi$ of a metric space $(X,d)$, the tensor product of $\phi$ and $\psi$ \cite{SV2005} (a special case of composition of bimodules in \cite{Lawvere73}) is an element in   $[0,\infty]$, given by \[\phi\otimes \psi=\inf_{x\in X}(\phi(x)+\psi(x)).\]

Let $\phi$ and $\psi$ be a weight and a coweight of a metric space $(X,d)$, respectively. We say that   $\phi$ is  a  right adjoint of $\psi$ (or,  $\psi$ is a  left adjoint of $\phi$) if $\phi\otimes \psi=0$ and $\phi(x)+\psi(y)\geq d(x,y)$ for all $x,y\in X$.   This notion is a special case of adjoint bimodules in enriched category theory \cite{Lawvere73,St05}. So, the left adjoint of a weight, if exists, is unique.

\begin{defn}Let $(X,d)$ be a metric space, $\phi$ a weight of $(X,d)$. \begin{enumerate}[(1)] \item (\cite{Lawvere73}) $\phi$ is a Cauchy weight if it has a left adjoint. \item (\cite{SV2005}) $\phi$ is a flat weight if $\inf_{x\in X}\phi(x)=0$ and
$\phi\otimes\max\{\psi_1,\psi_2\}=\max\{\phi\otimes \psi_1,\phi\otimes \psi_2\}$ for all coweights $\psi_1,\psi_2$ of $(X,d)$. \end{enumerate} \end{defn}

Each representable weight $d(-,x)$ is Cauchy, since it is right adjoint to the coweight $d(x,-)$. Following Lawvere \cite{Lawvere73}, we say that a metric space is Cauchy complete if it is separated and all of its Cauchy weights are representable. In the realm of separated and symmetric metric spaces, this notion of Cauchy completeness agrees with the traditional one, namely, every Cauchy sequence converges.

If   $\phi$ is a Cauchy weight of $(X,d)$, it is easy to check that its left adjoint is  given by \begin{equation}\label{left adjoint}\phi^\vdash(x)=\overline{d}(\phi,d(-,x)). \end{equation}

\begin{lem} \label{Cauchy weight} Let $\phi$ be a Cauchy weight of a metric space $(X,d)$ and $\phi^\vdash$ be its left adjoint.
\begin{enumerate}[(1)]
\item For each coweight $\psi$ of $(X,d)$, $\phi\otimes \psi=\sup_{y\in X}d_L(\phi^\vdash(y),\psi(y)).$
\item For each  weight $\xi$ of $(X,d)$, $\xi\otimes \phi^\vdash =\overline{d}(\phi,\xi)$.
\item For each non-empty set $\{\psi_i\}$ of coweights  of $(X,d)$, $\phi\otimes\sup_i\psi_i=\sup_i(\phi\otimes \psi_i)$. In particular, $\phi$ is  flat.
\item For each non-empty set $\{\xi_i\}$ of weights  of $(X,d)$,  $\inf_i \overline{d}(\phi, \xi_i) =\overline{d}(\phi,\inf_i\xi_i).$ \end{enumerate} \end{lem}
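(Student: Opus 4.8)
The plan is to reduce everything to the two defining properties of the adjunction between $\phi$ and its left adjoint, which I abbreviate $\lambda:=\phi^\vdash$: namely $\phi\otimes\lambda=\inf_{x}(\phi(x)+\lambda(x))=0$ and $\phi(x)+\lambda(y)\ge d(x,y)$ for all $x,y\in X$. Items (1) and (2) carry the real content; (3) and (4) then follow formally from them.

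For (1) I would prove the two inequalities separately. For the direction $\phi\otimes\psi\ge\sup_{y}d_L(\lambda(y),\psi(y))$, fix $y$ and observe that since $\psi$ is a coweight and $\phi(x)+\lambda(y)\ge d(x,y)$, for every $x$ one has $\psi(y)\le\psi(x)+d(x,y)\le\psi(x)+\phi(x)+\lambda(y)$, whence $\psi(y)-\lambda(y)\le\phi(x)+\psi(x)$; taking the infimum over $x$, recalling that the left-hand side is also $\ge0$, and then the supremum over $y$, gives the bound. For the reverse inequality I would write $\psi(x)\le\lambda(x)+\sup_{y}d_L(\lambda(y),\psi(y))$ for each $x$, add $\phi(x)$, and take the infimum over $x$, using $\inf_x(\phi(x)+\lambda(x))=0$ to absorb the constant. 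Item (2) is then proved by the same two-sided estimate with the roles adjusted: here $\xi$ is a weight, so the lower bound uses $\xi(y)\le\xi(x)+d(y,x)\le\xi(x)+\phi(y)+\lambda(x)$, and the upper bound uses $\xi(x)\le\phi(x)+\overline{d}(\phi,\xi)$, again collapsing the constant via $\inf_x(\phi(x)+\lambda(x))=0$.

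With (1) and (2) in hand, (3) and (4) become interchange-of-suprema/infima arguments. For (3), rewrite $\phi\otimes\psi_i=\sup_{y}d_L(\lambda(y),\psi_i(y))$ using (1); the elementary identity $d_L(a,\sup_i b_i)=\sup_i d_L(a,b_i)$, valid for a non-empty family since truncation by $0$ commutes with suprema, lets me swap the two suprema and conclude $\phi\otimes\sup_i\psi_i=\sup_i(\phi\otimes\psi_i)$. Flatness then drops out as the two-element case together with $\inf_x\phi(x)=0$, the latter being immediate from $\phi\otimes\lambda=0$ and $\lambda\ge0$. For (4), rewrite $\overline{d}(\phi,\xi_i)=\xi_i\otimes\lambda=\inf_x(\xi_i(x)+\lambda(x))$ using (2); since $\inf_i\xi_i$ is again a weight and $\lambda(x)$ does not depend on $i$, the double infimum $\inf_x\inf_i$ may be exchanged to yield $\overline{d}(\phi,\inf_i\xi_i)=\inf_i\overline{d}(\phi,\xi_i)$.

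I expect the only genuine difficulty to lie in (1) and (2), and within them in the bookkeeping of $[0,\infty]$-arithmetic: the truncated difference $d_L$ and the conventions $\infty-\infty=0$ and $\infty-a=\infty$ mean that moves such as adding $\phi(x)$ to both sides of an inequality or pulling a constant out of an infimum must be checked when infinite values occur. Once those estimates are secured, the passage to (3) and (4) is purely formal manipulation of suprema and infima.
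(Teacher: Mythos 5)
Your proposal is correct and takes essentially the same route as the paper's proof: items (1) and (2) are established by the same two-sided estimates from the unit inequality $\phi(x)+\phi^\vdash(y)\geq d(x,y)$ together with the counit $\phi\otimes\phi^\vdash=0$, and (3) and (4) are then drawn off as formal consequences, exactly as in the paper (which simply declares them ``immediate''). The only cosmetic difference is that the paper bounds $\phi\otimes\psi$ by a generic $a\geq\sup_{y}d_L(\phi^\vdash(y),\psi(y))$ where you substitute the supremum itself and absorb it as a constant; the $[0,\infty]$-arithmetic moves you flag (truncated subtraction, pulling constants through infima) are all valid under the paper's conventions.
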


\begin{proof} The formulas in (1) and (2) are a special case of   2(d) and 2(e) in Stubbe \cite{St05},  Lemma 2.2 that hold  for all quataloids. We include here a direct verification   for convenience of the reader.

 (1) For each $y\in Y$,  \begin{align*} \phi\otimes \psi+ \phi^\vdash(y) &= \inf_{x\in X}(\phi(x)+\psi(x)) + \phi^\vdash(y) \\ & =\inf_{x\in X}(\phi(x)+\psi(x)  + \phi^\vdash(y)) \\ &\geq  \inf_{x\in X}(d(x,y)+\psi(x)) \\ &\geq \psi(y), \end{align*} it follows that $\phi\otimes \psi\geq \sup_{y\in X}d_L(\phi^\vdash(y),\psi(y))$. To see the converse inequality, take   $a\in[0,\infty]$ with $a\geq \sup_{y\in X}d_L(\phi^\vdash(y),\psi(y))$. Then we have  \begin{align*}&\forall y,  d_L(\phi^\vdash(y),\psi(y)) \leq a \\ \Rightarrow ~&  \forall y, \psi(y) \leq \phi^\vdash(y)+a \\ \Rightarrow ~&  \forall y, \phi(y)+\psi(y)\leq \phi(y)+\phi^\vdash(y)+a\\ \Rightarrow ~&  \phi\otimes \psi\leq a.\end{align*} This proves that $\phi\otimes \psi\leq \sup_{y\in X}d_L(\phi^\vdash(y),\psi(y))$.

(2) Similar to (1).

(3) An immediate consequence of (1).

(4) An immediate consequence of (2).\end{proof}

Let $f:(X,d)\rightarrow(Y,p)$ be a non-expansive map between metric spaces. If $\phi$ is a weight of $(X,d)$ then $f(\phi):Y\rightarrow[0,\infty]$, given by \[f(\phi)(y) =\inf_{x\in X}(\phi(x)+p(y,f(x))),\] is a weight of $(Y,p)$. If $\psi$ is a weight (coweight, resp.) of $(Y,p)$  then $\psi\circ f$ is a weight (coweight, resp.) of $(X,d)$.

\begin{prop}\label{image of Cauchy weight} Let $f:(X,d)\rightarrow(Y,p)$ be a non-expansive map between metric spaces, $\phi$ a weight of $(X,d)$.   \begin{enumerate}[(1)]
\item If $\phi$ is flat then so is $f(\phi)$.
\item If $\phi$ is Cauchy then so is $f(\phi)$.
\item If $f(\phi)$ is Cauchy and $f$ is an isometric  map  then $\phi$ is Cauchy. \end{enumerate} \end{prop}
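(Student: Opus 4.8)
The three statements all flow from a single adjunction-type identity, which I would establish first: for every coweight $\psi$ of $(Y,p)$,
\[ f(\phi)\otimes\psi=\phi\otimes(\psi\circ f), \]
and moreover $\inf_{y\in Y}f(\phi)(y)=\inf_{x\in X}\phi(x)$. To prove the identity I would expand $f(\phi)(y)=\inf_{x}(\phi(x)+p(y,f(x)))$, interchange the two infima, and then observe that since $\psi:(Y,p)\to([0,\infty],d_L)$ is non-expansive one has $\inf_{y\in Y}(p(y,f(x))+\psi(y))=\psi(f(x))$ --- the inequality ``$\geq$'' is the defining condition of a coweight and ``$\leq$'' comes from putting $y=f(x)$. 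This collapses the double infimum to $\inf_{x}(\phi(x)+\psi(f(x)))=\phi\otimes(\psi\circ f)$. The formula for $\inf_y f(\phi)(y)$ is the special case obtained by leaving $y$ free and using $\inf_y p(y,f(x))=0$.

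For (1) I would combine this identity with the evident equality $\max\{\psi_1,\psi_2\}\circ f=\max\{\psi_1\circ f,\psi_2\circ f\}$. Given coweights $\psi_1,\psi_2$ of $(Y,p)$, the maps $\psi_1\circ f,\psi_2\circ f$ are coweights of $(X,d)$, so flatness of $\phi$ yields
\[ f(\phi)\otimes\max\{\psi_1,\psi_2\}=\phi\otimes\max\{\psi_1\circ f,\psi_2\circ f\}=\max\{\phi\otimes(\psi_1\circ f),\phi\otimes(\psi_2\circ f)\}=\max\{f(\phi)\otimes\psi_1,f(\phi)\otimes\psi_2\}, \]
while $\inf_y f(\phi)(y)=\inf_x\phi(x)=0$; hence $f(\phi)$ is flat. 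For (2) I would produce the left adjoint of $f(\phi)$ explicitly as the pushforward of $\phi^\vdash$, namely the coweight $\chi(y)=\inf_{x\in X}(\phi^\vdash(x)+p(f(x),y))$ of $(Y,p)$. The triangle condition $f(\phi)(y)+\chi(y')\geq p(y,y')$ reduces, after expanding both infima, to $\phi(x)+\phi^\vdash(x')\geq d(x,x')\geq p(f(x),f(x'))$ (the adjunction inequality for $\phi$ followed by non-expansiveness of $f$) combined with the triangle inequality in $(Y,p)$; and $f(\phi)\otimes\chi=0$ follows by choosing, for given $\varepsilon>0$, a point $x_0$ with $\phi(x_0)+\phi^\vdash(x_0)<\varepsilon$ from $\phi\otimes\phi^\vdash=0$ and evaluating at $y=f(x_0)$.

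For (3) I would take the candidate left adjoint of $\phi$ to be $\chi\circ f$, where $\chi$ is a left adjoint of the Cauchy weight $f(\phi)$. The tensor condition is immediate from the key identity: $\phi\otimes(\chi\circ f)=f(\phi)\otimes\chi=0$. The triangle condition $\phi(x)+\chi(f(x'))\geq d(x,x')$ is where the isometry hypothesis enters and is the main obstacle. Here I would first note $f(\phi)(f(x))\leq\phi(x)$ (take the term indexed by $x$ in the infimum defining $f(\phi)(f(x))$), then apply the adjunction inequality for $f(\phi)$ at the pair $(f(x),f(x'))$ to get $\phi(x)+\chi(f(x'))\geq f(\phi)(f(x))+\chi(f(x'))\geq p(f(x),f(x'))$, and finally invoke that $f$ is isometric to replace $p(f(x),f(x'))$ by $d(x,x')$. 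Non-expansiveness alone only gives $p(f(x),f(x'))\leq d(x,x')$, the wrong direction, so it is precisely the isometry that is needed. Conceptually, the identity $f(\phi)\otimes\psi=\phi\otimes(\psi\circ f)$ expresses that pushing a weight forward along $f$ is composition with the graph bimodule of $f$, which is left adjoint to its transpose; since adjoints compose, this explains a priori why $f$ preserves Cauchyness, and why an isometric (fully faithful) $f$ also reflects it.
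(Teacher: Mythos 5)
Your proof is correct and follows essentially the same route as the paper: the identity $f(\phi)\otimes\psi=\phi\otimes(\psi\circ f)$ plus max-distribution for (1), the pushforward coweight $\chi(y)=\inf_{x}(\phi^\vdash(x)+p(f(x),y))$ as explicit left adjoint for (2), and pulling back the left adjoint along $f$ (using isometry for the triangle inequality) for (3). The only difference is that you supply in full the verifications the paper dismisses as ``easy to check'' or leaves to the reader.
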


\begin{proof}(1) First, $\inf_{y\in Y}f(\phi)(y) \leq \inf_{y=f(x)}f(\phi)(y)=\inf_{x\in X}\phi(x)=0$. Second, it is easy to check that for each coweight $\psi$ of $(Y,p)$ it holds that $$f(\phi)\otimes\psi=\phi\otimes(\psi\circ f).$$
Therefore, for all coweights $\psi_1, \psi_2$ of $(Y,p)$, we have
\begin{align*}
 f(\phi)\otimes\max\{\psi_1, \psi_2\}&=\phi\otimes(\max\{\psi_1, \psi_2\}\circ f)\\
 &= \phi\otimes(\max\{\psi_1\circ f, \psi_2\circ f\}) \\
 &=\max\{\phi\otimes(\psi_1\circ f),\phi\otimes(\psi_2\circ f)\} &(\phi~{\rm is~flat})\\
 &=\max\{f(\phi)\otimes\psi_1,f(\phi)\otimes\psi_2\},
\end{align*} showing that $f(\phi)$ is flat.

(2) If $\phi^\vdash$ is a left adjoint of $\phi$, then the coweight $\psi:Y\rightarrow[0,\infty]$, given by $\psi(y) =\inf_{x\in X}(\phi^\vdash(x)+p(f(x),y))$, is a left adjoint of $f(\phi)$.

(3) We leave it to the reader to check that if $\psi$ is  a left adjoint of $f(\phi)$, then $\psi\circ f$ is   a left adjoint of $\phi$. \end{proof}

The following proposition is contained in Vickers \cite{SV2005},  Proposition 7.9 and Theorem 7.15. An extension to generalized partial metric spaces can be found in \cite{LLZ16}, Proposition 7.4.

\begin{prop} (\cite{SV2005}) \label{flat weight} Let $(X,d)$ be a metric space.  Then for each function $\phi:X\rightarrow[0,\infty]$,  the following   are equivalent:
\begin{enumerate}[\rm(1)] \item $\phi$ is a flat weight of $(X,d)$.
\item  $\phi$ is a weight of $(X,d)$   satisfying the following conditions: \begin{enumerate}
\item[\rm(a)]$\inf_{x\in X}\phi(x)=0$;
\item[\rm(b)]if $\phi(x_i)<\varepsilon_i\ (i=1,2)$, then there is some $y\in X$ and $\varepsilon>0$ such that $f(y)<\varepsilon$ and that $d(x_i,y)+\varepsilon<\varepsilon_i\ (i=1,2)$.
\end{enumerate}
\item There is a forward Cauchy net $\{x_\lam\}$ in $(X,d)$ such that $\phi=\inf_\lam\sup_{\sigma\geq\lam} d(-,x_\sigma)$.\end{enumerate}
\end{prop}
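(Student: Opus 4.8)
The plan is to establish the cycle $(2)\Rightarrow(1)$, $(1)\Rightarrow(2)$, $(2)\Rightarrow(3)$, $(3)\Rightarrow(2)$, handling the equivalence of the tensor-based flatness condition (1) and the elementary ``approximate directedness'' condition (2) first, and then relating both to the forward Cauchy net description (3).

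The implication $(2)\Rightarrow(1)$ is the most direct. Condition (a) is half of flatness, and the inequality $\phi\otimes\max\{\psi_1,\psi_2\}\geq\max\{\phi\otimes\psi_1,\phi\otimes\psi_2\}$ holds for any weight because $\phi(x)+\max\{\psi_1,\psi_2\}(x)=\max\{\phi(x)+\psi_1(x),\phi(x)+\psi_2(x)\}$ and the infimum of a pointwise maximum dominates the maximum of the infima. For the reverse inequality I would fix $t>\max\{\phi\otimes\psi_1,\phi\otimes\psi_2\}$ and choose $x_i$ with $\phi(x_i)+\psi_i(x_i)<t$; setting $\varepsilon_i=t-\psi_i(x_i)$ gives $\phi(x_i)<\varepsilon_i$, so condition (b) supplies $y$ and $\varepsilon>0$ with $\phi(y)<\varepsilon$ and $d(x_i,y)+\varepsilon<\varepsilon_i$. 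Since each $\psi_i$ is a coweight, $\psi_i(y)\leq\psi_i(x_i)+d(x_i,y)$, whence $\phi(y)+\psi_i(y)<t$ for both $i$ and therefore $\phi\otimes\max\{\psi_1,\psi_2\}<t$; letting $t$ decrease gives flatness.

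For $(1)\Rightarrow(2)$, condition (a) is immediate and the content is (b), for which the key device is a shifted pair of representable coweights. Given $\phi(x_i)<\varepsilon_i$ (which one may assume finite), put $M=\max\{\varepsilon_1,\varepsilon_2\}$ and consider the coweights $\psi_i=d(x_i,-)+(M-\varepsilon_i)$ and their maximum $\psi=\max\{\psi_1,\psi_2\}$. A short computation gives $\phi\otimes\psi_i=\phi(x_i)+M-\varepsilon_i$, so flatness yields $\phi\otimes\psi=M+\max\{\phi(x_1)-\varepsilon_1,\phi(x_2)-\varepsilon_2\}<M$. Hence some $y$ satisfies $\phi(y)+\psi(y)<M$, which unwinds to $\phi(y)+d(x_i,y)<\varepsilon_i$ for both $i$; any $\varepsilon$ with $\phi(y)<\varepsilon<\min_i(\varepsilon_i-d(x_i,y))$ then witnesses (b). The purpose of the shifts is precisely to rebalance the two constraints so that the single max-inequality coming from flatness separates into the two individual thresholds $\varepsilon_1,\varepsilon_2$; the unshifted coweights $d(x_i,-)$ only yield the weaker bound $\phi(y)+d(x_i,y)<\max\{\varepsilon_1,\varepsilon_2\}$.

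The substantive step, and the one I expect to require the most care, is $(2)\Rightarrow(3)$, where I would build a forward Cauchy net directly from the approximate directedness. Take as index set $\Lambda=\{(x,\varepsilon):\varepsilon\in(0,\infty),\ \phi(x)<\varepsilon\}$, ordered by $(x,\varepsilon)\sqsubseteq(y,\delta)$ iff $d(x,y)+\delta\leq\varepsilon$; reflexivity and transitivity follow from $d(x,x)=0$ and the triangle inequality, condition (a) makes $\Lambda$ nonempty, and condition (b) is exactly what makes $\sqsubseteq$ directed. Putting $x_{(x,\varepsilon)}=x$ defines a net, and for $\lambda=(x_0,\varepsilon_0)$ every $\nu\sqsupseteq\mu\sqsupseteq\lambda$ satisfies $d(x_\mu,x_\nu)\leq\varepsilon_0$, so the net is forward Cauchy. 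It then remains to identify the generated weight: for fixed $z$, the bound $d(z,x_\sigma)\leq d(z,x_0)+\varepsilon_0$ for $\sigma\sqsupseteq(x_0,\varepsilon_0)$, together with $(z,\varepsilon_0)\in\Lambda$ for $\varepsilon_0>\phi(z)$, gives $\inf_\lambda\sup_{\sigma\sqsupseteq\lambda}d(z,x_\sigma)\leq\phi(z)$, while the weight inequality $d(z,x_\sigma)>\phi(z)-\alpha$ for $\sigma=(u,\alpha)$ gives the reverse. The delicate point is this reverse inequality: one must verify that arbitrarily small values of $\alpha$ occur \emph{cofinally} among the indices $\sigma\sqsupseteq\lambda$, which again follows from directedness of $\Lambda$ together with $\inf_x\phi(x)=0$; the edge case $\phi(z)=\infty$ (and, relatedly, whether to admit $\varepsilon=\infty$ in $\Lambda$) should be dispatched separately.

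To close the cycle I would prove $(3)\Rightarrow(2)$ (equivalently $(3)\Rightarrow(1)$ via the first paragraph). Given $\phi=\inf_\lambda\sup_{\sigma\geq\lambda}d(-,x_\sigma)$ for a forward Cauchy net, (a) follows since $\phi(x_\mu)\leq\sup_{\sigma\geq\mu}d(x_\mu,x_\sigma)$, whose infimum over $\mu$ vanishes by the forward Cauchy condition. For (b), given $\phi(x_1'),\phi(x_2')<\varepsilon_1,\varepsilon_2$ one chooses tails on which $d(x_i',x_\sigma)$ stays below some $\rho_i<\varepsilon_i$, then takes $y=x_{\lambda_0}$ for an index $\lambda_0$ dominating both tails and far enough along that $\phi(y)<\tfrac12\min_i(\varepsilon_i-\rho_i)$; this $y$ together with $\varepsilon=\tfrac12\min_i(\varepsilon_i-\rho_i)$ witnesses (b). The only analytic inputs throughout are the interchange $\inf_\lambda\sup=\sup_\lambda\inf$ for forward Cauchy nets, equation (\ref{forward cauchy nets converge}), and elementary manipulations of $d_L$ and $d_R$; in particular no completeness of $(X,d)$ is invoked, which is consistent with flat weights being the genuinely ``ideal-like'' objects rather than the representable ones.
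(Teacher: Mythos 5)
Your proposal is correct, but there is nothing in the paper to compare it against line by line: the paper does not prove this proposition at all, it imports it from Vickers \cite{SV2005} (Proposition 7.9 and Theorem 7.15), where the result lives inside the theory of localic completion, with flat left modules identified with Cauchy filters of formal balls, i.e.\ with points of the localic completion. Your argument is therefore a genuinely different, self-contained and elementary route, and its two key devices are exactly the right ones. For $(1)\Rightarrow(2)$ you tensor $\phi$ against the shifted representable coweights $\psi_i=d(x_i,-)+(M-\varepsilon_i)$; since $\phi\otimes\psi_i=\phi(x_i)+M-\varepsilon_i$ by the (co)Yoneda computation $\inf_x\bigl(\phi(x)+d(x_i,x)\bigr)=\phi(x_i)$, flatness forces a single $y$ with $\phi(y)+d(x_i,y)<\varepsilon_i$ for both $i$ simultaneously --- the rebalancing constants are precisely what makes the one max-inequality split into the two thresholds, as you note. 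For $(2)\Rightarrow(3)$ your index set $\Lambda=\{(x,\varepsilon)\mid\phi(x)<\varepsilon\}$ with $(x,\varepsilon)\sqsubseteq(y,\delta)$ iff $d(x,y)+\delta\le\varepsilon$ is in essence the formal-ball order that drives Vickers' construction, but you use it directly as the index set of a net, bypassing locales and filters entirely; your flagged ``delicate points'' (cofinality of small radii, the case $\phi(z)=\infty$) are real and are resolved exactly as you indicate, by directedness of $\Lambda$ together with condition (a). What your route buys is a proof usable inside this paper without external machinery; what the citation buys the paper is the surrounding theory (points of the localic completion, saturatedness of the class of flat weights) that it invokes elsewhere. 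Three small repairs would make your write-up airtight: in $(3)\Rightarrow(2)$, state that $\phi=\inf_\lambda\sup_{\sigma\ge\lambda}d(-,x_\sigma)$ is itself a weight (immediate from property (W2): infima and suprema of weights are weights); in the forward-Cauchyness step of $(2)\Rightarrow(3)$, say explicitly that indices $(x_0,\varepsilon_0)$ with $\varepsilon_0$ arbitrarily small exist, again by (a), so that the bound $\sup_{\nu\sqsupseteq\mu\sqsupseteq\lambda}d(x_\mu,x_\nu)\le\varepsilon_0$ really yields infimum $0$; and note that the statement's ``$f(y)<\varepsilon$'' in condition (b) is a typo for $\phi(y)<\varepsilon$, which you have silently and correctly adopted.
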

For a metric space $(X,d)$, let
\[(\CF X,\overline{d})\] be the subspace of $(\CP X,\overline{d})$ consisting of flat weights. Define
\[\y_X:(X,d)\rightarrow (\CF X,\overline{d})\] by $\y_X(x)= d(-,x)$. Then
$\y_X$ is an isometric map.

\begin{prop}\label{supremum}   A metric space $(X,d)$  is Yoneda complete if  and only if  for each flat weight $\phi$ of $(X,d)$, there is some $a\in X$ such that  for all $y\in X$, \begin{equation}\label{colim} \overline{d}(\phi,\y_X(y))=d(a,y).\end{equation} \end{prop}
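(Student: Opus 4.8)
The plan is to reduce the whole statement to a single computation identifying $\overline{d}(\phi,\y_X(y))$ with the liminf expression that defines a Yoneda limit, and then to invoke Proposition \ref{flat weight} to pass freely between flat weights and forward Cauchy nets. The bridge I would establish is the following claim: if $\{x_\lam\}$ is a forward Cauchy net in $(X,d)$ and $\phi=\inf_\lam\sup_{\sigma\geq\lam}d(-,x_\sigma)$ is the flat weight it determines, then for every $y\in X$
\[\overline{d}(\phi,\y_X(y))=\inf_\lam\sup_{\sigma\geq\lam}d(x_\sigma,y).\]
Granting this, the right-hand side is exactly the quantity that $d(a,y)$ must equal for $a$ to be a Yoneda limit of $\{x_\lam\}$, so Equation (\ref{colim}) becomes literally the defining condition for a Yoneda limit; both implications then become bookkeeping.

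To prove the claim, write $L(y)=\inf_\lam\sup_{\sigma\geq\lam}d(x_\sigma,y)$ and recall that $\overline{d}(\phi,\y_X(y))=\sup_{x\in X}\bigl(d(x,y)\ominus\phi(x)\bigr)$ by definition of $\overline{d}$. For the inequality $\overline{d}(\phi,\y_X(y))\leq L(y)$ I would show that $d(x,y)\leq\phi(x)+L(y)$ for every $x$: given $\varepsilon>0$, use directedness of the net to find a single index $\lam$ beyond which both $d(x,x_\sigma)<\phi(x)+\varepsilon$ and $d(x_\sigma,y)<L(y)+\varepsilon$ hold, fix one such $\sigma$, and apply $d(x,y)\leq d(x,x_\sigma)+d(x_\sigma,y)$. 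For the reverse inequality I would exploit the forward Cauchy condition to force $\phi$ to be small along the net itself: choosing $\lam_0$ with $\sup_{\nu\geq\mu\geq\lam_0}d(x_\mu,x_\nu)<\varepsilon$ yields $\phi(x_\mu)\leq\sup_{\sigma\geq\mu}d(x_\mu,x_\sigma)<\varepsilon$ for every $\mu\geq\lam_0$, whence $d(x_\mu,y)\ominus\phi(x_\mu)\geq d(x_\mu,y)-\varepsilon$ and therefore $\overline{d}(\phi,\y_X(y))\geq\sup_{\mu\geq\lam_0}d(x_\mu,y)-\varepsilon\geq L(y)-\varepsilon$; letting $\varepsilon\to0$ completes the claim, the case $L(y)=\infty$ being subsumed by the same estimate.

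With the claim available, the proposition follows. For necessity, assume Yoneda completeness and let $\phi$ be a flat weight; Proposition \ref{flat weight} supplies a forward Cauchy net with $\phi=\inf_\lam\sup_{\sigma\geq\lam}d(-,x_\sigma)$, Yoneda completeness supplies a Yoneda limit $a$ with $d(a,y)=L(y)$ for all $y$, and the claim rewrites this as $\overline{d}(\phi,\y_X(y))=d(a,y)$. For sufficiency, given a forward Cauchy net $\{x_\lam\}$ I form its liminf weight $\phi$, which is flat by Proposition \ref{flat weight}; the hypothesis provides $a$ with $\overline{d}(\phi,\y_X(y))=d(a,y)$, and the claim converts this into $d(a,y)=L(y)$ for all $y$, that is, $a$ is a Yoneda limit of $\{x_\lam\}$.

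I expect the verification of the claim — in particular its $\geq$ half, where the forward Cauchy hypothesis is indispensable in forcing $\phi(x_\mu)$ to vanish along the net — to be the only substantive step; everything surrounding it is a routine translation through Proposition \ref{flat weight}.
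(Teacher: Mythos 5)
Your proposal is correct and follows the same route as the paper: the paper's entire proof consists of citing Lemma 46 of \cite{FSW} --- which asserts precisely your bridging claim, that $a$ is a Yoneda limit of $\{x_\lam\}$ if and only if $\overline{d}(\phi,\y_X(y))=d(a,y)$ for all $y$, where $\phi=\inf_\lam\sup_{\sigma\geq\lam}d(-,x_\sigma)$ --- and then combining that fact with Proposition \ref{flat weight}, exactly as you do. The only difference is that you prove the bridge directly instead of citing it, and your two-sided verification (including the use of forward Cauchyness to force $\phi(x_\mu)<\varepsilon$ along the net in the $\geq$ half, and the handling of the infinite cases) is sound, so your argument is a self-contained rendering of the paper's proof.
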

\begin{proof} By Lemma 46 in \cite{FSW}, for each forward Cauchy net $\{x_\lam\}$    in   $(X,d)$, an element $a$ in $X$ is a Yoneda limit of  $\{x_\lam\}$ if and only if $\overline{d}(\phi,\y_X(y))=d(a,y)$ for all $y\in X$, where $\phi$ is the weight of $(X,d)$ given by $\phi=\inf_\lam\sup_{\sigma\geq\lam} d(-,x_\sigma)$. The  conclusion follows immediately from a combination of this fact and Proposition \ref{flat weight}. \end{proof}

An element $a$ satisfying Equation (\ref{colim}) is called, in enriched category theory, a colimit of the identity $(X,d)\rightarrow(X,d)$ weighted by $\phi$ \cite{Kelly,KS05,Ru}. In this paper, we simply say that $a$ is a \emph{colimit} of $\phi$ and write $a=\colim\phi$. The above proposition says that a metric space $(X,d)$  is Yoneda complete if  and only if every flat weight of $(X,d)$  has a colimit.

The following conclusion is contained in Vickers \cite{SV2005}, Proposition 7.14 and Theorem 7.15. It implies that for each metric space $(X,d)$, the metric space $(\CF X,\overline{d})$ is Yoneda complete.

\begin{thm}(\cite{SV2005})\label{Yoneda complete} Let  $(X,d)$ be a metric space. Every forward Cauchy net $\{\phi_\lam\}$ in the metric space $(\CP X,\overline{d})$ has a Yoneda limit given by $\inf_\lam\sup_{\lam\leq\mu}\phi_\mu$; the subspace $(\CF X,\overline{d})$ is closed in $(\CP X,\overline{d})$ with respect to  Yoneda limits of forward Cauchy nets. \end{thm}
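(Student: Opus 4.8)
The plan is to verify that the pointwise-defined weight $\phi:=\inf_\lam\sup_{\si\geq\lam}\phi_\si$ is a Yoneda limit of $\{\phi_\lam\}$ in $(\CP X,\overline{d})$, and then that it inherits flatness. First I would note that $\phi$ is genuinely a weight: it is assembled from the $\phi_\si$ by pointwise suprema and infima, so $\phi\in\CP X$ by property (W2). The workhorse observation is that evaluation at a fixed $x$ is non-expansive, since $\overline{d}(\xi,\psi)\geq\psi(x)\ominus\xi(x)=d_L(\xi(x),\psi(x))$; hence $\{\phi_\si(x)\}$ is a forward Cauchy net in $([0,\infty],d_L)$ for every $x$, and by Example \ref{d_L} its liminf and limsup agree, giving $\phi(x)=\inf_\lam\sup_{\si\geq\lam}\phi_\si(x)=\sup_\lam\inf_{\si\geq\lam}\phi_\si(x)$. (Dually, the triangle inequality makes $\xi\mapsto\overline{d}(\xi,\psi)$ non-expansive into $([0,\infty],d_R)$, so by Example \ref{d_R} the net $\{\overline{d}(\phi_\si,\psi)\}$ converges, which is reassuring but not strictly needed below.)

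The single estimate on which everything rests is extracted from the forward Cauchy condition $\inf_\lam\sup_{\nu\geq\mu\geq\lam}\overline{d}(\phi_\mu,\phi_\nu)=0$: given $\varepsilon>0$ there is $\lam_0$ with $\phi_\nu(x)\leq\phi_\mu(x)+\varepsilon$ for all $x$ and all $\nu\geq\mu\geq\lam_0$; taking the supremum over $\nu\geq\si$ and then the defining infimum yields the uniform bound $\phi(x)\leq\phi_\si(x)+\varepsilon$ for every $x$ and every $\si\geq\lam_0$. With this I would prove the identity $\overline{d}(\phi,\psi)=\inf_\lam\sup_{\si\geq\lam}\overline{d}(\phi_\si,\psi)$ by two inequalities. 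For ``$\leq$'', rewrite $\psi(x)\ominus\phi(x)=\inf_\lam\sup_{\si\geq\lam}(\psi(x)\ominus\phi_\si(x))$ using $\phi(x)=\sup_\lam\inf_{\si\geq\lam}\phi_\si(x)$ together with the truncation identities $a\ominus\sup_\lam b_\lam=\inf_\lam(a\ominus b_\lam)$ and $a\ominus\inf_i b_i=\sup_i(a\ominus b_i)$, then take $\sup_x$ and invoke the formal inequality $\sup_x\inf_\lam\leq\inf_\lam\sup_x$. For ``$\geq$'', the uniform bound gives $\psi(x)\ominus\phi_\si(x)\leq(\psi(x)\ominus\phi(x))+\varepsilon\leq\overline{d}(\phi,\psi)+\varepsilon$ for $\si\geq\lam_0$, whence $\sup_{\si\geq\lam_0}\overline{d}(\phi_\si,\psi)\leq\overline{d}(\phi,\psi)+\varepsilon$; letting $\varepsilon\to0$ completes part (1).

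For the closure statement I would show $\phi$ is flat whenever each $\phi_\si$ is. The condition $\inf_x\phi(x)=0$ is immediate from the uniform bound and $\inf_x\phi_\si(x)=0$. For the multiplicativity condition I would first establish, for every coweight $\psi$, that $\phi\otimes\psi=\inf_\lam\sup_{\si\geq\lam}(\phi_\si\otimes\psi)=\sup_\lam\inf_{\si\geq\lam}(\phi_\si\otimes\psi)$: the first equality follows by distributing $\inf_x$ through the pointwise formula for $\phi$ and applying minimax, while the reverse uses the uniform bound, which forces $\phi\otimes\psi\leq(\phi_\si\otimes\psi)+\varepsilon$ for $\si\geq\lam_0$. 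Thus $\{\phi_\si\otimes\psi\}$ converges to $\phi\otimes\psi$. Applying this with $\psi=\max\{\psi_1,\psi_2\}$ (a coweight) and using flatness of each $\phi_\si$, the claim reduces to the elementary fact that $\lim_\si\max\{a_\si,b_\si\}=\max\{\lim_\si a_\si,\lim_\si b_\si\}$ for convergent nets in $[0,\infty]$, giving $\phi\otimes\max\{\psi_1,\psi_2\}=\max\{\phi\otimes\psi_1,\phi\otimes\psi_2\}$. Hence $\phi\in\CF X$ by Proposition \ref{flat weight}.

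I expect the genuine obstacle to be the ``$\geq$'' half of part (1), and identically the reverse equality in the tensor computation: the reverse minimax inequality is false in general, so the forward Cauchy hypothesis must enter precisely through the uniform estimate $\phi(x)\leq\phi_\si(x)+\varepsilon$. Some care is also needed with truncated subtraction at the value $\infty$, but these are routine verifications once the estimate is secured.
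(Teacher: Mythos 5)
Your proof is correct, and it necessarily takes a different route from the paper, because the paper contains no proof of Theorem \ref{Yoneda complete} at all: the result is imported from Vickers \cite{SV2005} (Proposition 7.14 and Theorem 7.15), where it is obtained inside the machinery of flat left modules and localic completion. What you give instead is a self-contained, elementary net-theoretic verification, and its two load-bearing ingredients are sound: the uniform estimate $\phi(x)\le\phi_\si(x)+\varepsilon$ for all $x$ and all $\si\ge\lam_0$ (i.e., $\overline{d}(\phi_\si,\phi)\le\varepsilon$ eventually), which is exactly where forward Cauchyness enters and which carries the nontrivial half of the Yoneda-limit identity; and the convergence $\phi_\si\otimes\psi\to\phi\otimes\psi$ for every coweight $\psi$, which reduces flatness of the limit to continuity of $\max$ for convergent nets in $[0,\infty]$. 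Since the paper uses this theorem essentially (e.g., in Lemma \ref{bicauchy=cauchy}), an elementary proof of this kind is genuinely worth having.

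One step must be restated, though it costs nothing. The truncation identity $a\ominus\sup_\lam b_\lam=\inf_\lam(a\ominus b_\lam)$ that you invoke is \emph{false} in $[0,\infty]$ under the paper's conventions: take $a=\infty$ and $b_\lam$ finite with $\sup_\lam b_\lam=\infty$; then the left-hand side is $\infty\ominus\infty=0$ while the right-hand side is $\infty$. This does not break your argument, for two independent reasons. First, in the ``$\le$'' half you only need the inequality $a\ominus\sup_\lam b_\lam\le\inf_\lam(a\ominus b_\lam)$, which always holds because $b\mapsto a\ominus b$ is order-reversing, and the companion identity $a\ominus\inf_i b_i=\sup_i(a\ominus b_i)$ is valid in all cases, including at $\infty$; so the chain of estimates survives with ``$=$'' weakened to ``$\le$'', which is all that is used after applying $\sup_x$ and minimax. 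Second, forward Cauchyness together with the dichotomy of Example \ref{d_L} shows that $\phi(x)=\infty$ forces $\phi_\si(x)=\infty$ eventually, so the offending configuration cannot even occur here. Finally, a one-line remark should be added to the closure part: you verify flatness of the canonical candidate $\inf_\lam\sup_{\si\ge\lam}\phi_\si$; since $\overline{d}$ is separated, Yoneda limits in $(\CP X,\overline{d})$ are unique, so this does establish that $(\CF X,\overline{d})$ is closed under Yoneda limits of forward Cauchy nets, as claimed.
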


From the point of view of category theory, a combination of Proposition \ref{image of Cauchy weight}(1), Proposition \ref{flat weight}, Proposition \ref{supremum} and Theorem \ref{Yoneda complete} says that flat weights form a saturated class of weights \cite{KS05,LZ07} on metric spaces. As pointed out to us by the referee,  the saturatedness of the class of flat weights is a special case of a general result in enriched category theory, namely, Proposition 5.4 in Kelly and Schmidt \cite{KS05}. The space $(\CF X,\overline{d})$ has the following universal property: for each non-expansive map $f$ from $(X,d)$ to a separated and Yoneda complete metric space $(Y,p)$, there exists a unique Yoneda continuous map $f^*:(\CF X,\overline{d}) \rightarrow(Y,p)$ such that $f=f^*\circ\y_X$. This universal property is also a special case of a result  in  \cite{Kelly,KS05} about cocompletion with respect to saturated classes of weights. Because of this universal property, $(\CF X,\overline{d})$ is called the \emph{Yoneda completion} of $(X,d)$. The subspace of $(\CF X,\overline{d})$  consisting of Cauchy weights is the \emph{Cauchy completion} of $(X,d)$ \cite{Lawvere73}. 

\begin{exmp}The Yoneda completion of $([0,\infty),d_R)$ is $([0,\infty],d_R)$. \end{exmp}

\section{Sobrification of  metric approach spaces}
In this section, we show that the specialization metric space of the sobrification of a metric approach space $\Gamma(X,d)$ coincides with the Yoneda completion of $(X,d)$.

\begin{lem} \label{approach prime} Let $(X,\delta)$ be an approach space. If $\{x_\lam\}$ is a forward Cauchy net  in $(X,\Omega(\delta))$, then the function \[\phi: X\rightarrow[0,\infty], \quad \phi(x)= \sup_\lam\delta(x,A_\lam),\]  is an approach prime of $(X,\delta)$, where  $A_\lam=\{x_\sigma\mid \sigma\geq\lam\}$.
\end{lem}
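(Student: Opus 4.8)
The plan is to verify the three defining properties of an approach prime for $\phi$ one at a time: that $\phi$ is a regular function, that $\inf_{x\in X}\phi(x)=0$, and the primality condition. Throughout I write $d=\Omega(\delta)$, so $d(x,y)=\delta(x,\{y\})$, and note that $A_{\lam'}\subseteq A_\lam$ whenever $\lam'\ge\lam$, so $\delta(x,A_\lam)$ is non-decreasing in $\lam$. Regularity of $\phi$ is immediate: each $\delta(-,A_\lam)$ is a regular function of $(X,\delta)$, and by (R1) of Proposition \ref{regular functions} the supremum $\phi=\sup_\lam\delta(-,A_\lam)$ is again regular. For $\inf_x\phi(x)=0$ I would evaluate $\phi$ on the net itself: given $\varepsilon>0$, forward Cauchyness yields $\lam_0$ with $d(x_\mu,x_\nu)\le\varepsilon$ whenever $\nu\ge\mu\ge\lam_0$; fixing $\mu\ge\lam_0$ and any index $\lam$, choosing a cofinal $\kappa\ge\lam,\mu$ gives $\delta(x_\mu,A_\lam)\le\delta(x_\mu,A_\kappa)\le d(x_\mu,x_\kappa)\le\varepsilon$, so $\phi(x_\mu)\le\varepsilon$ and $\inf_x\phi(x)=0$. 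The same estimate shows $\phi(x_\mu)\le\varepsilon$ for all $\mu\ge\lam_0$, hence $L_\phi:=\inf_\lam\sup_{\sigma\ge\lam}\phi(x_\sigma)=0$.

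The core of the argument is a uniform criterion for when a regular function sits below $\phi$. Any regular function $\rho$ of $(X,\delta)$ is a weight of $(X,d)$, i.e.\ a non-expansive map $(X,d)\to([0,\infty],d_R)$; since non-expansive maps carry forward Cauchy nets to forward Cauchy nets, $\{\rho(x_\lam)\}$ is forward Cauchy in $([0,\infty],d_R)$. By Example \ref{d_R} such a net converges in the extended reals, so
\[L_\rho:=\inf_\lam\sup_{\sigma\ge\lam}\rho(x_\sigma)=\sup_\lam\inf_{\sigma\ge\lam}\rho(x_\sigma)=\lim_\lam\rho(x_\lam)\]
is a genuine limit. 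I then claim $\rho\le\phi$ if and only if $L_\rho=0$. The forward direction is clear: $\rho\le\phi$ forces $\rho(x_\sigma)\le\phi(x_\sigma)$, so $L_\rho\le L_\phi=0$. For the converse, I would use the regular-function inequality $\rho(x)\le\delta(x,A_\lam)+\sup_{\sigma\ge\lam}\rho(x_\sigma)\le\phi(x)+\sup_{\sigma\ge\lam}\rho(x_\sigma)$; taking the infimum over $\lam$ gives $\rho(x)\le\phi(x)+L_\rho=\phi(x)$.

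With this criterion the primality condition follows. Suppose $\xi,\psi$ are regular with $\min\{\xi,\psi\}\le\phi$. As $\min\{\xi,\psi\}$ is regular by (R2), the criterion yields $L_{\min\{\xi,\psi\}}=0$. The decisive step, which I expect to be the main obstacle, is that $L$ commutes with $\min$ here: because $\xi(x_\lam)\to L_\xi$ and $\psi(x_\lam)\to L_\psi$ are honest limits — and this is precisely where both the forward Cauchyness of the net and the weight property of $\xi,\psi$ are indispensable, since they rule out the oscillation that would otherwise make $L_{\min\{\xi,\psi\}}$ strictly smaller than $\min\{L_\xi,L_\psi\}$ — continuity of $\min$ on $[0,\infty]$ gives $L_{\min\{\xi,\psi\}}=\min\{L_\xi,L_\psi\}$. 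Hence $\min\{L_\xi,L_\psi\}=0$, so $L_\xi=0$ or $L_\psi=0$, and the criterion converts this into $\xi\le\phi$ or $\psi\le\phi$. Thus $\phi$ meets all three conditions and is an approach prime.
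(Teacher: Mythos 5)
Your proof is correct, and for the heart of the lemma it takes a genuinely different route from the paper's. The first two steps are the same as in the paper: regularity of $\phi$ via (R1) of Proposition \ref{regular functions}, and $\inf_{x\in X}\phi(x)=0$ by evaluating $\phi$ on a tail of the net. For the primality condition, however, the paper argues by contradiction: assuming $\psi\nleq\phi$ and $\xi\nleq\phi$, it picks witnesses $x_1,x_2$ and $\varepsilon>0$ with $\psi(x_1)-\phi(x_1)>\varepsilon$ and $\xi(x_2)-\phi(x_2)>\varepsilon$, uses the contraction inequality $\delta(x_i,A_\lam)\geq\psi(x_i)\ominus\sup\psi(A_\lam)$ to conclude that $\sup\psi(A_\lam)>\varepsilon$ and $\sup\xi(A_\lam)>\varepsilon$ for \emph{every} $\lam$, and then locates a single tail index $\sigma$ with $\psi(x_\sigma)>\varepsilon$, $\xi(x_\sigma)>\varepsilon$ but $\phi(x_\sigma)\leq\varepsilon$, contradicting $\min\{\psi,\xi\}\leq\phi$. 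You instead prove the clean equivalence $\rho\leq\phi\iff L_\rho=0$ for every regular function $\rho$, where $L_\rho=\inf_\lam\sup_{\sigma\geq\lam}\rho(x_\sigma)$, and reduce primality to the identity $L_{\min\{\xi,\psi\}}=\min\{L_\xi,L_\psi\}$, obtained from the existence of honest limits of $\xi,\psi$ along the net (Example \ref{d_R} applied to these weights of $(X,\Omega(\delta))$) together with continuity of $\min$ on $[0,\infty]$. Both proofs run on the same fuel---regular functions are weights of $(X,\Omega(\delta))$, and $\phi$ vanishes along the tail of the net---but the packaging differs, and each buys something. Your criterion-based route avoids the contradiction and, more importantly, makes fully rigorous the one step the paper states tersely: the paper's claim that a single $\sigma$ exists at which \emph{both} $\psi(x_\sigma)>\varepsilon$ and $\xi(x_\sigma)>\varepsilon$ (``by arbitrariness of $\lam$ and the forward Cauchyness'') is not automatic from those two suprema conditions alone; it needs precisely the almost-increasing behaviour of weights along a forward Cauchy net, which is what your convergence argument supplies explicitly. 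Conversely, the paper's argument is more elementary and self-contained: it uses only the defining inequality of regular functions and an $\varepsilon$-estimate, with no appeal to convergence in $[0,\infty]$ or to a limit calculus for $\min$.
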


\begin{proof}For simplicity, we write $d$ for the metric $\Omega(\delta)$.
We prove the conclusion in three steps.

\textbf{Step 1}.  $\phi$ is a regular function of $(X,\delta)$. This follows from Proposition \ref{regular functions}(R1) and the fact that   $\delta(-,A_\lam)$ is a regular function for each $\lam$.

\textbf{Step 2}.  $\inf_{x\in X}\phi(x)=0$. For any $\varepsilon>0$, there exists $\lam_0$ such that $d(x_\mu,x_\nu)<\varepsilon$ whenever $\nu\geq\mu\geq\lam_0$. Then for all $\lam$,  $\delta(x_{\lam_0},A_\lam)\leq \inf_{\sigma\geq\lam_0,\lam}d(x_{\lam_0},x_\sigma) <\varepsilon,$ hence $\phi(x_{\lam_0})=\sup_\lam \delta(x_{\lam_0},A_\lam)\leq\varepsilon$. This shows that $\inf_{x\in X}\phi(x)=0$.

\textbf{Step 3}.  For any regular functions $\psi$ and $\xi$ of $(X,\delta)$, if $\min\{\psi,\xi\}\leq \phi$   then either $\psi\leq \phi$ or $\xi\leq \phi$. If not, there exist $x_1$ and $x_2$ such that $\psi(x_1)>\phi(x_1)$ and $\xi(x_2)>\phi(x_2)$. Take   $\varepsilon>0$ with $\psi(x_1)-\phi(x_1)>\varepsilon$ and $\xi(x_2)-\phi(x_2)>\varepsilon$, i.e.,
$$\psi(x_1)-\sup_\lam\delta(x_1,A_\lam)>\varepsilon, \quad  \xi(x_2)-\sup_\lam\delta(x_2,A_\lam)>\varepsilon.$$ Since $\psi,\xi:(X,\delta)\rightarrow\mathbb{P}$ are contractions,   for every $\lam$, it holds that \[\delta(x_1,A_{\lam})\geq \psi(x_1)-\sup \psi(A_{\lam}),\quad \delta(x_2,A_{\lam})\geq \xi(x_2)-\sup \xi(A_{\lam}),\] hence \[\sup \psi(A_{\lam})\geq\psi(x_1)- \delta(x_1,A_{\lam})>\varepsilon,\quad \sup \xi(A_{\lam})\geq\xi(x_2)- \delta(x_2,A_{\lam})>\varepsilon.\]
By arbitrariness of $\lam$ and the forward Cauchyness of  $\{x_\lambda\}$, there exists some $\sigma$ such that $\psi(x_\sigma)>\varepsilon$, $\xi(x_\sigma)>\varepsilon$,   and that $d(x_\sigma,x_\nu)<\varepsilon$ whenever $\sigma\leq\nu$. Then \[\phi(x_\sigma)=\sup_\lam \delta(x_\sigma,A_\lam)\leq \sup_\lam\inf_{\nu\geq\lam,\sigma}d(x_\sigma, x_\nu) \leq \varepsilon,\]  a contradiction  to that $\min\{\psi,\xi\}\leq \phi$.   \end{proof}
The following conclusion is an analogy, in the metric setting, of the fact that the specialization order of a sober topological space is directed complete.
\begin{prop}\label{Sober implies Yoneda} The specialization metric  of a sober approach space is Yoneda complete. \end{prop}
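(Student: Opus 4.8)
The plan is to prove the contrapositive-free statement directly from the definition of Yoneda completeness: take an arbitrary forward Cauchy net in the specialization metric and exhibit a concrete Yoneda limit, manufactured from the sobriety of the approach space via Lemma \ref{approach prime}. Let $(X,\delta)$ be a sober approach space and write $d=\Omega(\delta)$, so $d(x,y)=\delta(x,\{y\})$. Given a forward Cauchy net $\{x_\lambda\}$ in $(X,d)$, I would first apply Lemma \ref{approach prime} to obtain the approach prime $\phi(x)=\sup_\lambda\delta(x,A_\lambda)$, where $A_\lambda=\{x_\sigma\mid\sigma\geq\lambda\}$. Sobriety then yields a unique $a\in X$ with $\phi=\delta(-,\{a\})$, that is, $d(x,a)=\phi(x)$ for all $x$. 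The whole problem reduces to showing that this $a$ is a Yoneda limit of $\{x_\lambda\}$, i.e. that $d(a,y)=\inf_\lambda\sup_{\sigma\geq\lambda}d(x_\sigma,y)$ for every $y\in X$.

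I would split the desired equality into two inequalities. For $d(a,y)\leq\inf_\lambda\sup_{\sigma\geq\lambda}d(x_\sigma,y)$, the starting observation is that $\phi(a)=d(a,a)=0$, which forces $\delta(a,A_\lambda)=0$ for every $\lambda$. Applying (A4) with $x=a$, $A=\{y\}$ and $B=A_\lambda$ gives
\[
d(a,y)=\delta(a,\{y\})\leq\delta(a,A_\lambda)+\sup_{\sigma\geq\lambda}\delta(x_\sigma,\{y\})=\sup_{\sigma\geq\lambda}d(x_\sigma,y),
\]
and taking the infimum over $\lambda$ settles this direction.

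The reverse inequality is where the real content lies, and I expect it to be the main obstacle. The key intermediate claim is that the net converges \emph{forward} to $a$, namely $\inf_\lambda\sup_{\sigma\geq\lambda}d(x_\sigma,a)=0$; granting this, the triangle inequality $d(x_\sigma,y)\leq d(x_\sigma,a)+d(a,y)$ immediately yields $\inf_\lambda\sup_{\sigma\geq\lambda}d(x_\sigma,y)\leq d(a,y)$, completing the proof. To establish the forward convergence I would exploit the explicit formula $d(x_\sigma,a)=\phi(x_\sigma)=\sup_\mu\delta(x_\sigma,A_\mu)$ together with the submonotonicity $\delta(x,A)\leq\delta(x,\{z\})$ for $z\in A$ (a consequence of (A3)). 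Concretely, fix $\varepsilon>0$ and choose $\lambda_0$ witnessing forward Cauchyness, so that $d(x_\mu,x_\nu)<\varepsilon$ whenever $\nu\geq\mu\geq\lambda_0$. For any $\sigma\geq\lambda_0$ and any index $\mu$, directedness supplies $\tau\geq\mu,\sigma$; then $x_\tau\in A_\mu$ and, since $\tau\geq\sigma\geq\lambda_0$, we get $\delta(x_\sigma,A_\mu)\leq d(x_\sigma,x_\tau)<\varepsilon$. Taking the supremum over $\mu$ gives $d(x_\sigma,a)=\phi(x_\sigma)\leq\varepsilon$ for all $\sigma\geq\lambda_0$, hence $\inf_\lambda\sup_{\sigma\geq\lambda}d(x_\sigma,a)=0$.

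Combining the two inequalities shows $d(a,y)=\inf_\lambda\sup_{\sigma\geq\lambda}d(x_\sigma,y)$ for every $y$, so $a$ is a Yoneda limit of $\{x_\lambda\}$. Since the forward Cauchy net was arbitrary, $(X,\Omega(\delta))$ is Yoneda complete. I would emphasize that this argument uses only Lemma \ref{approach prime}, sobriety, and the axiom (A4); the subtle point to get right is the asymmetry of $d$, which is precisely why the forward convergence $d(x_\sigma,a)\to 0$ (rather than $d(a,x_\sigma)\to 0$) is exactly the missing ingredient, and why the directedness step pairing $\sigma$ with a later $\tau\geq\sigma$ is essential for invoking forward Cauchyness in the correct order.
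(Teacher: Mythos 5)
Your proof is correct and follows essentially the same route as the paper: both apply Lemma \ref{approach prime} plus sobriety to produce the candidate limit $a$, prove $d(a,y)\leq\inf_\lambda\sup_{\sigma\geq\lambda}d(x_\sigma,y)$ via (A4), and obtain the reverse inequality by combining the triangle inequality with the estimate $\inf_\lambda\sup_{\sigma\geq\lambda}d(x_\sigma,a)=0$, which the paper states equivalently as $\inf_\lambda\sup_{\sigma\geq\lambda}\sup_\tau\delta(x_\sigma,A_\tau)=0$ and proves by the same directedness argument. Your phrasing of that estimate as forward convergence of the net to $a$ is a clean way to organize the identical computation.
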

\begin{proof}
Let $(X,\delta)$ be a sober approach space and $d=\Omega(\delta)$ be its specialization metric.  Assume that $\{x_\lam\}$ is a forward Cauchy net in $(X,d)$. Then $\sup_\lam \delta(-,A_\lam)$ is an approach prime of $(X,\delta)$ by Lemma \ref{approach prime}. Since $(X,\delta)$ is sober, there exists $a\in X$ such that \[\sup_\lam \delta(-,A_\lam)=\delta(-,\{a\})=d(-,a).\] We claim that $a$ is a Yoneda limit of $\{x_\lam\}$, i.e.,  for all $x\in X$ \[\inf_\lam\sup_{\sigma\geq\lam}d(x_\sigma,x)=d(a,x).\]

For each $\lam$, since $\delta(a,A_\lam)\leq\sup_\sigma \delta(a,A_\sigma)=d(a,a)=0$,   then  \[d(a,x)=\delta(a,\{x\}) \leq \delta(a,A_\lam)+\sup_{\sigma\geq\lam}d(x_\sigma,x) =\sup_{\sigma\geq\lam}d(x_\sigma,x)  \] by (A4), hence
\[ d(a,x)\leq \inf_\lam\sup_{\sigma\geq\lam}d(x_\sigma,x).\]

For the converse inequality, we first show that $$\inf_\lam\sup_{\sigma\geq\lam}\sup_\tau\delta(x_\sigma, A_\tau)=0.$$ Given $\varepsilon>0$, since  $\{x_\lam\}$ is  forward Cauchy, there is some $\lam_0$ such that $d(x_\mu,x_\nu)<\varepsilon$ whenever $\nu\geq\mu\geq\lam_0$. Then for any index $\tau$ and any $\sigma\geq\lam_0$, \[\delta(x_\sigma,A_\tau)\leq\inf_{\mu\geq\sigma,\tau} d(x_\sigma,x_\mu)<\varepsilon,\] it follows that $\sup_{\sigma\geq\lam_0}\sup_\tau\delta(x_\sigma,A_\tau) \leq\varepsilon$, hence \[\inf_\lam\sup_{\sigma\geq\lam}\sup_\tau\delta(x_\sigma,A_\tau) =0\] by  arbitrariness of $\varepsilon$.   Therefore, \begin{align*}\inf_\lam\sup_{\sigma\geq\lam}d(x_\sigma,x)&\leq \inf_\lam\sup_{\sigma\geq\lam}(d(x_\sigma,a)+d(a,x))\\ &= \inf_\lam\sup_{\sigma\geq\lam}\sup_\tau\delta(x_\sigma,A_\tau) +d(a,x) & (d(x_\sigma,a)=\sup_\lam \delta(x_\sigma,A_\lam))\\ &=d(a,x).\end{align*}
  This completes the proof.
\end{proof}

\begin{lem} \label{metric approach prime} For each metric space $(X,d)$,  the  approach primes of $\Gamma(X, d)$  are  exactly the flat weights of $(X,d)$.
\end{lem}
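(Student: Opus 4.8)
The plan is to exploit the fact, recorded in Proposition \ref{regular function in MAS}, that the regular functions of $\Gamma(X,d)$ are precisely the weights of $(X,d)$. Since an approach prime is by definition a regular function $\phi$ with $\inf_{x\in X}\phi(x)=0$ satisfying the primeness condition, while a flat weight is a weight $\phi$ with $\inf_{x\in X}\phi(x)=0$ satisfying the tensor condition $\phi\otimes\max\{\psi_1,\psi_2\}=\max\{\phi\otimes\psi_1,\phi\otimes\psi_2\}$, the whole statement reduces to showing that, for a weight $\phi$ with $\inf_{x\in X}\phi(x)=0$, the primeness condition and the flatness condition are equivalent. I would prove the two implications separately.

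For flat $\Rightarrow$ prime I would go through nets. If $\phi$ is flat, then by the equivalence (1)$\Leftrightarrow$(3) of Proposition \ref{flat weight} there is a forward Cauchy net $\{x_\lam\}$ in $(X,d)$ with $\phi=\inf_\lam\sup_{\sigma\geq\lam}d(-,x_\sigma)$. Equation (\ref{forward cauchy nets converge}) lets me rewrite this as $\phi(x)=\sup_\lam\inf_{\sigma\geq\lam}d(x,x_\sigma)=\sup_\lam\Gamma(d)(x,A_\lam)$, where $A_\lam=\{x_\sigma\mid\sigma\geq\lam\}$. Since $\Omega(\Gamma(d))=d$, the net $\{x_\lam\}$ is forward Cauchy in $(X,\Omega(\Gamma(d)))$, so Lemma \ref{approach prime}, applied to the approach space $\Gamma(X,d)$, says exactly that this $\phi$ is an approach prime.

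For prime $\Rightarrow$ flat I would argue directly on the tensor condition. The inequality $\phi\otimes\max\{\psi_1,\psi_2\}\geq\max\{\phi\otimes\psi_1,\phi\otimes\psi_2\}$ always holds by monotonicity of $\otimes$, so only the reverse inequality needs proof. Fix coweights $\psi_1,\psi_2$ and a finite $\beta>\max\{\phi\otimes\psi_1,\phi\otimes\psi_2\}$. The device is to turn the coweights into weights: for a coweight $\psi$ the function $\beta\ominus\psi$ is a weight (a routine check from the coweight inequality), and one has the identity $\min\{\beta\ominus\psi_1,\beta\ominus\psi_2\}=\beta\ominus\max\{\psi_1,\psi_2\}$ together with, for any coweight $\psi$,
\[\beta\ominus\psi\leq\phi\iff\forall y\in X,\ \beta\leq\phi(y)+\psi(y)\iff\phi\otimes\psi\geq\beta.\]
Setting $\xi_i=\beta\ominus\psi_i$, the choice of $\beta$ gives $\xi_i\not\leq\phi$ for $i=1,2$, while $\min\{\xi_1,\xi_2\}\leq\phi$ is equivalent to $\phi\otimes\max\{\psi_1,\psi_2\}\geq\beta$. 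Applying the primeness of $\phi$ to the weights $\xi_1,\xi_2$ (which are regular functions of $\Gamma(X,d)$ by Proposition \ref{regular function in MAS}) in contrapositive form forces $\min\{\xi_1,\xi_2\}\not\leq\phi$, that is $\phi\otimes\max\{\psi_1,\psi_2\}<\beta$. Letting $\beta$ decrease to $\max\{\phi\otimes\psi_1,\phi\otimes\psi_2\}$ yields the desired inequality, and the case where this maximum is $\infty$ is trivial.

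I expect the prime $\Rightarrow$ flat direction to be the main obstacle, precisely because primeness is phrased in terms of a minimum of two \emph{weights} ordered against $\phi$, whereas flatness is phrased in terms of a maximum of two \emph{coweights} under the tensor $\otimes$. The key move that overcomes this mismatch is the translation $\psi\mapsto\beta\ominus\psi$ sending a coweight to a weight, under which $\max$ turns into $\min$ and the condition $\phi\otimes\psi\geq\beta$ turns into $\beta\ominus\psi\leq\phi$; once this dictionary is in place, the primeness condition converts verbatim into the nontrivial half of the flatness condition. The flat $\Rightarrow$ prime direction is comparatively routine given Lemma \ref{approach prime} and Proposition \ref{flat weight}.
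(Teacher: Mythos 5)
Your proof is correct, but the prime $\Rightarrow$ flat direction takes a genuinely different route from the paper's. The flat $\Rightarrow$ prime half is exactly the paper's argument: Proposition \ref{flat weight} (1)$\Leftrightarrow$(3) to produce a forward Cauchy net, Equation (\ref{forward cauchy nets converge}) to rewrite $\phi$ as $\sup_\lam\Gamma(d)(-,A_\lam)$, then Lemma \ref{approach prime}. For the converse, however, the paper never touches the tensor condition: it instead verifies the pointwise characterization of flatness, condition (b) of Proposition \ref{flat weight}(2), by applying primeness to the two concrete regular functions $\psi=\max\{0,\varepsilon_1-d(x_1,-)\}$ and $\xi=\max\{0,\varepsilon_2-d(x_2,-)\}$. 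Observe that these are precisely your weights $\beta\ominus\psi_i$ specialized to the representable coweights $\psi_i=d(x_i,-)$, so your dictionary $\psi\mapsto\beta\ominus\psi$ generalizes the paper's construction from representable to arbitrary coweights. What each approach buys: the paper's route is shorter because the equivalence (1)$\Leftrightarrow$(2) of Proposition \ref{flat weight} (cited from Vickers) has already done the hard work of reducing the quantification over all coweights to pointwise, representable data; your route works directly from the definition of a flat weight, so it only needs (1)$\Leftrightarrow$(3) of that proposition and is more self-contained, at the modest price of the bookkeeping with finite $\beta$, the verification that $\beta\ominus\psi$ is a weight, and the degenerate case $\max\{\phi\otimes\psi_1,\phi\otimes\psi_2\}=\infty$ --- all of which you handle correctly.
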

\begin{proof}
Given an approach prime $\phi$ of $\Gamma(X, d)$, we show that $\phi$ is a flat weight of $(X,d)$. It suffices to check that $\phi$ satisfies the condition (b) in Proposition \ref{flat weight}.

Suppose $\phi(x_i)<\varepsilon_i\ (i=1,2)$. Consider the functions $\psi(x)=\max\{0, \varepsilon_1-d(x_1,x)\}$ and $\xi(x)=\max\{0, \varepsilon_2-d(x_2,x)\}$.  It is easy to check that $\psi$ and $\xi$ are regular functions satisfying  $\psi\nleqslant \phi$ and $\xi\nleqslant \phi$ ($\psi(x_1)=\varepsilon_1$, $\xi(x_2)=\varepsilon_2$). Since $\phi$ is an approach prime, we have $\min\{\psi,\xi\}\nleqslant \phi$. Thus, there exists $y\in X$ such that $\phi(y)<\min\{\psi(y),\xi(y)\}$, namely $$\phi(y)<\varepsilon_1-d(x_1,y)\ \ {\rm and}\ \ \phi(y)<\varepsilon_2-d(x_2,y).$$
So, there exists $\varepsilon>0$ such that $\phi(y)<\varepsilon$ and $d(x_i,y)+\varepsilon<\varepsilon_i\ (i=1,2)$.

Conversely, we show that each flat weight $\phi$ of $(X,d)$ is an approach prime of $(X,\Gamma(d))$. By Proposition \ref{flat weight},   $\phi=\inf_\lam\sup_{\sigma\geq\lam} d(-,x_\sigma)$ for a forward Cauchy net $\{x_\lam\}$ in $(X,d)$. For each $x\in X$, $\{d(x,x_\lam)\}$ is a forward Cauchy net in $([0,\infty],d_L)$, hence, thanks to Equation (\ref{forward cauchy nets converge}),
\[\phi(x)=\inf_\lam\sup_{\sigma\geq\lam} d(x,x_\sigma)= \sup_\lam\inf_{\sigma\geq\lam}d(x,x_\sigma)= \sup_\lam\Gamma(d)(x,A_\lam),\] where  $A_\lam=\{x_\sigma\mid \sigma\geq\lambda\}$. Therefore, $\phi$ is an approach prime of $(X,\Gamma(d))$, by  Lemma \ref{approach prime}.
\end{proof}

Now we come to the main result in this section.
\begin{thm}\label{5.8} For a metric space $(X,d)$,
the specialization metric space of the sobrification of $\Gamma(X, d)$ coincides with the Yoneda completion of $(X,d)$. \end{thm}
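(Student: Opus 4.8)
The plan is to unwind both sides of the asserted equality and observe that they are, quite literally, the same metric space. First I would identify the underlying set of the sobrification of $\Gamma(X,d)$. By the construction in Section~3, the sobrification of the approach space $\Gamma(X,d)=(X,\Gamma(d))$ is the space $(\widehat{X},\widehat{\delta})$, where $\widehat{X}$ is the set of approach primes of $\Gamma(X,d)$. Lemma~\ref{metric approach prime} identifies these approach primes precisely with the flat weights of $(X,d)$, so as a set $\widehat{X}=\CF X$, which is exactly the underlying set of the Yoneda completion.

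Next I would compute the specialization metric $\Omega(\widehat{\delta})$ on $\widehat{X}$ and show that it agrees with $\overline{d}$. By the definition of the specialization metric together with Equation~(\ref{widehat distance}), for all $\phi,\psi\in\widehat{X}$ one has
\[\Omega(\widehat{\delta})(\phi,\psi)=\widehat{\delta}(\phi,\{\psi\})=\widehat{\psi}(\phi).\]
But $\widehat{\psi}(\phi)=\sup_{x\in X}d_L(\phi(x),\psi(x))$ by the very definition of $\widehat{\psi}$, and this supremum is exactly $\overline{d}(\phi,\psi)$ by the definition of the metric $\overline{d}$ on weights. Hence $\Omega(\widehat{\delta})$ coincides with the restriction of $\overline{d}$ to the set of flat weights.

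Putting the two observations together, the specialization metric space $(\widehat{X},\Omega(\widehat{\delta}))$ of the sobrification of $\Gamma(X,d)$ equals $(\CF X,\overline{d})$, which is precisely the Yoneda completion of $(X,d)$. This would complete the proof.

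In this instance I expect essentially no obstacle at this final stage: all of the real content has already been absorbed into Lemma~\ref{metric approach prime}, whose proof in turn rests on the net characterization of flat weights in Proposition~\ref{flat weight} and on the approach-prime construction of Lemma~\ref{approach prime}. The only remaining point is the bookkeeping observation that the two expressions $\widehat{\psi}(\phi)$ and $\overline{d}(\phi,\psi)$ denote the same supremum $\sup_{x\in X}d_L(\phi(x),\psi(x))$, so the argument reduces to assembling the earlier results rather than to any new estimate.
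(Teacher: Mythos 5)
Your proposal is correct and follows essentially the same route as the paper's own proof: identify the underlying sets via Lemma \ref{metric approach prime}, then use Equation (\ref{widehat distance}) to see that $\widehat{\Gamma(d)}(\phi,\{\psi\})=\widehat{\psi}(\phi)=\sup_{x\in X}d_L(\phi(x),\psi(x))=\overline{d}(\phi,\psi)$. There is no gap; the argument is exactly the assembly of prior results that the paper itself performs.
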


\begin{proof} Let $(\widehat{X},\widehat{\Gamma(d)})$ denote the sobrification of $\Gamma(X, d)$, and $(\CF X,\overline{d})$ the Yoneda completion of $(X,d)$.
 By Lemma \ref{metric approach prime},   $\widehat{X}$ and $\CF X$ have the same elements, i.e.,  the flat weights of $(X,d)$. For any flat weights $\phi,\psi$ of $(X,d)$, we have by Equation (\ref{widehat distance}) that \[\widehat{\Gamma(d)}(\phi,\{\psi\})=\widehat{\psi}(\phi) =\sup_{x\in X}d_L(\phi(x),\psi(x))= \overline{d}(\phi,\psi),\]  showing that the specialization metric   of $(\widehat{X},\widehat{\Gamma(d)})$ coincides with $ \overline{d}$.
\end{proof}

\begin{prop}\label{P is sober} The approach space $\mathbb{P}$ is the sobrification of the metric approach space $\Gamma([0,\infty), d_R)$. 
In particular, $\mathbb{P}$ is sober.\end{prop}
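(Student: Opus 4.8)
The plan is to identify the sobrification $(\widehat{X},\widehat{\Gamma(d_R)})$ of $\Gamma([0,\infty),d_R)$ with $\mathbb{P}$ on the nose, and then read off sobriety for free from Theorem \ref{sobrification}(1). First I would pin down the underlying set $\widehat{X}$. By Lemma \ref{metric approach prime} the approach primes of $\Gamma([0,\infty),d_R)$ are exactly the flat weights of $([0,\infty),d_R)$, and by the classification of forward Cauchy nets in Example \ref{d_R} together with Proposition \ref{flat weight} these are precisely the representable weights $d_R(-,a)$ with $a\in[0,\infty)$ and a single extra weight $\phi_\infty\equiv 0$ (the liminf $\inf_\lam\sup_{\sigma\geq\lam}d_R(-,x_\sigma)$ of any net tending to infinity). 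This yields a bijection $\Theta:[0,\infty]\to\widehat{X}$ sending $a\mapsto d_R(-,a)$ for $a<\infty$ and $\infty\mapsto\phi_\infty$, which restricts to the embedding $\eta_X$ on $[0,\infty)$. At this point Theorem \ref{5.8} already tells us the specialization metrics agree (both equal $d_R$ on $[0,\infty]$, the Yoneda completion of $([0,\infty),d_R)$), but since the sobrification is visibly not a metric approach space this does not suffice, and I must compare the full approach structures.

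The cleanest comparison goes through regular functions, since an approach space is determined by them (Proposition \ref{regular functions}) and contractions are detected by them (Proposition \ref{contraction by regular frame}). Every regular function of $(\widehat{X},\widehat{\Gamma(d_R)})$ has the form $\widehat{\xi}$ for a weight $\xi$ of $([0,\infty),d_R)$. Transporting along $\Theta$ and using Lemma \ref{widehat propoties}(1) at the representable points, together with the one-line computation $\widehat{\xi}(\phi_\infty)=\sup_t d_L(0,\xi(t))=\sup_t\xi(t)$ at infinity, I find $\widehat{\xi}\circ\Theta=\bar{\xi}$, where $\bar{\xi}:[0,\infty]\to[0,\infty]$ extends $\xi$ by $\bar{\xi}(\infty)=\sup_t\xi(t)$. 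The proof thus reduces to showing that the functions $\bar{\xi}$, as $\xi$ ranges over weights of $([0,\infty),d_R)$, are exactly the regular functions of $\mathbb{P}$; granting this, $\widehat{\xi}\circ\Theta=\bar{\xi}$ is a regular function of $\mathbb{P}$ and $\bar{\xi}\circ\Theta^{-1}=\widehat{\xi}$ is a regular function of $\widehat{X}$, so both $\Theta$ and $\Theta^{-1}$ are contractions by Proposition \ref{contraction by regular frame} and $\Theta$ is an isomorphism.

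The heart of the argument is this last equality of function sets. For one inclusion I would take a regular function $\phi$ of $\mathbb{P}$, i.e.\ a map with $\delta_\mathbb{P}(x,A)\geq\phi(x)\ominus\sup\phi(A)$; testing against $A=\{y\}$ shows $\phi$ is non-expansive for $d_R$ (hence monotone, with restriction $\xi=\phi|_{[0,\infty)}$ a weight), while testing against $A=[0,\infty)$ with $x=\infty$, where $\delta_\mathbb{P}(\infty,[0,\infty))=\infty\ominus\infty=0$, pins down $\phi(\infty)=\sup_{t<\infty}\xi(t)$, so $\phi=\bar{\xi}$. For the converse I would verify directly that each $\bar{\xi}$ is a contraction $\mathbb{P}\to\mathbb{P}$, splitting on $x>\sup A$ versus $x\leq\sup A$. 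The case $x>\sup A$ is a routine application of the $1$-Lipschitzness $\xi(x)\le\xi(a)+(x\ominus a)$ summed to the supremum; the delicate point, which I expect to be the main obstacle, is the case $x\le\sup A$ when $s=\sup A$ is not attained in $A$. There one must prove $\bar{\xi}(x)\le\sup\bar{\xi}(A)$, and this is exactly where the module inequality $\xi(s)\le\xi(a)+(s\ominus a)$ for $a\in A$ approaching $s$ is used to squeeze $\bar{\xi}(s)$ below $\sup_{a\in A}\xi(a)$ (the parallel $s=\infty$ subcase being absorbed by $\bar{\xi}(\infty)=\sup_t\xi(t)$ and monotonicity).

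Once the two function sets coincide, $\Theta$ is an isomorphism of approach spaces compatible with the embeddings, so $\mathbb{P}\cong(\widehat{X},\widehat{\Gamma(d_R)})$ is the sobrification of $\Gamma([0,\infty),d_R)$. In particular $\mathbb{P}$ is sober, being a sobrification by Theorem \ref{sobrification}(1).
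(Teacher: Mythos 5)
Your proposal is correct, but it reaches the isomorphism $\mathbb{P}\cong(\widehat{X},\widehat{\Gamma(d_R)})$ by a genuinely different route than the paper. Both arguments begin identically: the approach primes of $\Gamma([0,\infty),d_R)$ are classified via Lemma~\ref{metric approach prime}, Proposition~\ref{flat weight} and Example~\ref{d_R} as the representables $d_R(-,a)$ with $a<\infty$ plus the constant weight $\underline{0}$, giving the same bijection (your $\Theta$, the paper's $f$), and both then quote Theorem~\ref{sobrification}(1) for sobriety. From there the paper works at the level of \emph{distances}: it verifies $\delta_\mathbb{P}(b,A)=\widehat{\Gamma(d_R)}(f(b),f(A))$ directly from the sup-formula in Equation~(\ref{widehat distance*}), splitting into three cases according to whether $\sup A=\infty$ and $b=\infty$, in each case exhibiting witness regular functions (notably $d_R(-,\sup A)$). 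You instead work at the level of \emph{regular functions}: you show that composition with $\Theta$ carries the regular functions $\widehat{\xi}$ of the sobrification bijectively onto those of $\mathbb{P}$, by proving that $\mathcal{R}\mathbb{P}$ consists exactly of the extensions $\bar{\xi}$ with $\bar{\xi}(\infty)=\sup_t\xi(t)$ of weights $\xi$ of $([0,\infty),d_R)$; Proposition~\ref{contraction by regular frame} then makes both $\Theta$ and $\Theta^{-1}$ contractions. The case analysis does not disappear, it just migrates: your verification that each $\bar{\xi}$ is a contraction $\mathbb{P}\to\mathbb{P}$, with the delicate unattained-supremum subcase, is essentially the same squeeze (the module inequality $\xi(x)\le\xi(a)+d_R(x,a)$ inf'd over $a\in A$) that the paper performs in its Case~3. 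What your route buys is an explicit, reusable characterization of $\mathcal{R}\mathbb{P}$, which makes transparent exactly how $\mathbb{P}$ fails to be a metric approach space (only the value at $\infty$ is constrained differently from $\Gamma(d_R)$); what the paper's route buys is brevity, since it never needs to describe $\mathcal{R}\mathbb{P}$ in full, only to test enough regular functions to pin down each distance.
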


\begin{proof} Suppose that  $\phi$ is an approach prime of $([0,\infty), \Gamma(d_R))$. Then there is a forward Cauchy net $\{a_\lam\}$ in $([0,\infty), d_R)$ such that $\phi=\inf_\lam\sup_{\sigma\geq\lam} d_R(-,a_\sigma)$. If $\{a_\lam\}$ is eventually a Cauchy net of real numbers in the usual sense, then $\phi =d_R(-,a)$, where $a=\lim_\lam a_\lam$. If $\{a_\lam\}$ is an almost increasing net that tends to infinity, then $\phi$ is the constant function $\underline{0}$ on $([0,\infty), d_R)$ with value $0$.

Define a map \[f:\mathbb{P}\rightarrow (\widehat{[0,\infty)}, \widehat{\Gamma(d_R)})\] by  $f(a)= d_R(-,a)$ for all $a\in[0,\infty)$ and $f(\infty)=\underline{0}$.
We claim that $f$ is an isomorphism of approach spaces. Since $f$ is clearly  a bijection, we only need to check that $\delta_\mathbb{P}(b,A)=\widehat{\Gamma(d_R)}(f(b),f(A))$ for all $b\in[0,\infty]$ and (non-empty) $A\subseteq[0,\infty]$.

 By Equation (\ref{widehat distance*}),
\[\widehat{\Gamma(d_R)}(f(b),f(A))= \sup\{\widehat{\phi}(f(b))\mid \phi\in\mathcal{R}[0,\infty),\forall a\in A, \widehat{\phi}(f(a))=0\},\]
where, $\mathcal{R}[0,\infty)$ denotes the set of  regular functions of $([0,\infty), \Gamma(d_R))$. For each $\phi\in\mathcal{R}[0,\infty)$, we have $\widehat{\phi}(f(a))=\widehat{\phi}(d_R(-,a))=\phi(a)$ for $a\in[0,\infty)$ and  $\widehat{\phi}(f(\infty))= \widehat{\phi}(\underline{0})=\sup_{x\in[0,\infty)}\phi(x)$. We proceed with three cases.

\textbf{Case 1}.  $\sup A=\infty$. In this case, the constant function $\underline{0}$ is the only  regular function of $([0,\infty), \Gamma(d_R))$  that satisfies the condition that $\widehat{\phi}(f(a))=0$ for all $a\in A$,  hence  \[\delta_\mathbb{P}(b,A)=0 =\widehat{\underline{0}}(f(b))= \widehat{\Gamma(d_R)}(f(b),f(A)).\]

\textbf{Case 2}. $b=\infty$, $\sup A<\infty$. Since the regular function $\phi=d_R(-, \sup A)$ satisfies the condition that $\widehat{\phi}(f(a))=0$ for all $a\in A$,  \[\widehat{\Gamma(d_R)}(f(b),f(A)) \geq \widehat{\phi}(f(b)) = \widehat{\phi}(\underline{0}) = \sup_{x\in[0,\infty)}d_R(x,\sup A)=\infty,\] it follows that $\delta_\mathbb{P}(b,A) =\infty=\widehat{\Gamma(d_R)}(f(b),f(A))$.

\textbf{Case 3}. $b<\infty$, $\sup A<\infty$. Since $\psi= d_R(-,\sup A)$ is a regular function of  $([0,\infty), \Gamma(d_R))$ such that $\widehat{\psi}(f(a))=0$ for all $a\in A$, it follows that \[ \delta_\mathbb{P}(b,A) = d_R(b,\sup A) = \widehat{\psi}(f(b))\leq \widehat{\Gamma(d_R)}(f(b),f(A)). \] Conversely, let $\phi$ be a regular function   on  $([0,\infty), \Gamma(d_R))$ such that $\widehat{\phi}(f(a))=\phi(a)=0$ for all $a\in A$. Since $\phi$ is a weight of $([0,\infty), d_R)$ by Proposition \ref{regular function in MAS}, it follows that \[\widehat{\phi}(f(b))=\phi(b)\leq\phi(a)+d_R(b,a) =d_R(b,a) \] for all $a\in A$, hence $\widehat{\phi}(f(b))\leq\inf_{a\in A}d_R(b,a)=d_R(b,\sup A)$. Therefore \[\widehat{\Gamma(d_R)}(f(b),f(A))\leq d_R(b,\sup A)= \delta_\mathbb{P}(b,A),\] completing the proof.
\end{proof}

\section{Sober metric approach spaces}

In this section we characterize  metric approach spaces that are sober as exactly the Smyth complete spaces.

\begin{defn} \label{Smyth complete} A  metric space  is Smyth complete if it is  separated  and all of its forward Cauchy nets  converge  in its symmetrization.  \end{defn}

The metric space $([0,\infty],d_L)$ is Smyth complete. But, $([0,\infty],d_R)$ is not, though it is Yoneda complete.

Smyth completeness originated in the works of Smyth \cite{Smyth87,Smyth94}. The above postulation  is taken from \cite{Goubault,KS2002}.  For more information on  Smyth completeness the reader is referred  to \cite{BvBR1998,FS02,Goubault,KSF,KS2002}. In these works, Smyth completeness is more or less related to the topological properties of the spaces under consideration. However, as shown below,  if we view metric spaces as categories enriched over Lawvere's quantale $([0,\infty]^{\rm op},+)$,  Smyth completeness for metric spaces  can be formulated purely in categorical terms: a metric space is Smyth complete if it is separated and all of its flat weights are representable. This shows, in close resemblance to Lawvere's postulation of complete metric spaces (i.e., every Cauchy weight is representable), that Smyth completeness is a categorical property.  This  can be thought of as an example for   ``whether Lawvere's work has any bearing on what we are doing here", a question raised  by Smyth in \cite{Smyth87}.

We need some preparations. A net $\{x_\lam\}$ in a metric space $(X,d)$ is biCauchy \cite{KS2002} if \[\inf_\lam\sup_{\nu,\mu\geq\lam}d(x_\mu,x_\nu)=0.\] 

Every forward Cauchy net in $([0,\infty],d_L)$ is biCauchy. The sequence $\{n\}$   in $([0,\infty],d_R)$ is forward Cauchy, but not biCauchy.

\begin{lem}\label{bicauchy=cauchy} A forward Cauchy net  $\{x_\lam\}$  in a metric space $(X,d)$ is biCauchy if and only if the weight $\phi=\inf_\lam\sup_{\sigma\geq\lam} d(-,x_\sigma)$ is   Cauchy. \end{lem}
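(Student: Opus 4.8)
The plan is to introduce, alongside the weight $\phi=\inf_\lam\sup_{\sigma\geq\lam}d(-,x_\sigma)$, the companion coweight $\psi$ given by $\psi(x)=\inf_\lam\sup_{\sigma\geq\lam}d(x_\sigma,x)$, and to route both implications through the single identity that $\{x_\lam\}$ is biCauchy if and only if $\phi\otimes\psi=0$. First I would record two facts valid for every forward Cauchy net. (i) $\psi$ is a coweight, immediate from the triangle inequality since $\psi(y)=\inf_\lam\sup_{\sigma\geq\lam}d(x_\sigma,y)\leq\psi(x)+d(x,y)$; moreover the unit inequality $\phi(x)+\psi(y)\geq d(x,y)$ holds for all $x,y$, which one sees by choosing, for $\varepsilon>0$, an index beyond which $d(x,x_\sigma)<\phi(x)+\varepsilon$ and $d(x_\sigma,y)<\psi(y)+\varepsilon$ hold for a common $\sigma$, and then passing through $x_\sigma$. (ii) $\phi(x_\tau)$ is eventually small: given $\varepsilon>0$, forward Cauchyness yields $\lam_0$ with $d(x_\mu,x_\nu)<\varepsilon$ whenever $\nu\geq\mu\geq\lam_0$, whence $\phi(x_\tau)\leq\sup_{\sigma\geq\tau}d(x_\tau,x_\sigma)\leq\varepsilon$ for all $\tau\geq\lam_0$.

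Next I would prove the key equivalence. For biCauchy $\Rightarrow\phi\otimes\psi=0$: biCauchyness supplies $\lam_0$ with $d(x_\mu,x_\nu)<\varepsilon$ for all $\mu,\nu\geq\lam_0$, and evaluating at the single point $x_{\lam_0}$ bounds both $\phi(x_{\lam_0})$ and $\psi(x_{\lam_0})$ by $\varepsilon$, so $\phi\otimes\psi\leq\phi(x_{\lam_0})+\psi(x_{\lam_0})\leq 2\varepsilon$. For the converse, $\phi\otimes\psi=0$ produces for each $\varepsilon>0$ a point $x_*$ with $\phi(x_*)<\varepsilon$ and $\psi(x_*)<\varepsilon$; unwinding the infima gives an index beyond which $d(x_*,x_\sigma)<\varepsilon$ and $d(x_\sigma,x_*)<\varepsilon$ simultaneously, and the triangle inequality through $x_*$ yields $d(x_\mu,x_\nu)<2\varepsilon$ for all large $\mu,\nu$, i.e. biCauchyness.

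With the equivalence in hand, the two directions follow quickly. If $\{x_\lam\}$ is biCauchy then $\phi\otimes\psi=0$, which combined with fact (i) says precisely that $\psi$ is a left adjoint of $\phi$, so $\phi$ is Cauchy. Conversely, suppose $\phi$ is Cauchy with left adjoint $\phi^\vdash$, so $\phi\otimes\phi^\vdash=0$. The crucial step is to show $\psi\leq\phi^\vdash$: using the explicit formula $\phi^\vdash(x)=\overline{d}(\phi,d(-,x))=\sup_{y}\bigl(d(y,x)\ominus\phi(y)\bigr)$ from Equation (\ref{left adjoint}), I specialize $y=x_\tau$, so that for $\tau\geq\lam_0$ fact (ii) gives $\phi^\vdash(x)\geq d(x_\tau,x)\ominus\phi(x_\tau)\geq d(x_\tau,x)-\varepsilon$, whence $\phi^\vdash(x)\geq\sup_{\tau\geq\lam_0}d(x_\tau,x)-\varepsilon\geq\psi(x)-\varepsilon$; letting $\varepsilon\to 0$ gives $\phi^\vdash\geq\psi$. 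Monotonicity of $\otimes$ then forces $\phi\otimes\psi\leq\phi\otimes\phi^\vdash=0$, and the equivalence above returns biCauchyness.

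I expect the main obstacle to be exactly this last step, $\psi\leq\phi^\vdash$, since it is where the abstract categorical left adjoint must be tied back to the concrete backward distances $d(x_\sigma,x)$ of the net, and where the $\ominus$ and $\infty$ conventions require a moment's care (the case $d(x_\tau,x)=\infty$ being handled trivially, as then $\phi^\vdash(x)=\infty$). Everything else reduces to routine triangle-inequality estimates.
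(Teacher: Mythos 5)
Your proof is correct, and its skeleton --- the companion coweight $\psi=\inf_\lam\sup_{\sigma\geq\lam}d(x_\sigma,-)$, the unit inequality $\phi(x)+\psi(y)\geq d(x,y)$, and the reduction of biCauchyness to $\phi\otimes\psi=0$ --- is the same as the paper's; your forward direction simply spells out what the paper dismisses as ``easily verified''. The genuine difference is in the converse. The paper identifies the left adjoint \emph{exactly}: starting from $\phi^\vdash(x)=\overline{d}(\phi,d(-,x))$ (Equation (\ref{left adjoint})), it invokes Theorem \ref{Yoneda complete} to see that $\phi$ is a Yoneda limit of the net $\{d(-,x_\lam)\}$ in $(\CP X,\overline{d})$, whence $\phi^\vdash(x)=\inf_\lam\sup_{\sigma\geq\lam}\overline{d}(d(-,x_\sigma),d(-,x))=\psi(x)$, and then deduces biCauchyness from $\inf_{x\in X}(\phi(x)+\psi(x))=0$ by the triangle inequality. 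You instead prove only the one-sided estimate $\psi\leq\phi^\vdash$, by specializing the supremum in $\phi^\vdash(x)=\sup_y\bigl(d(y,x)\ominus\phi(y)\bigr)$ at the points $y=x_\tau$ and using that $\phi(x_\tau)$ is eventually small for a forward Cauchy net; monotonicity of $\otimes$ then gives $\phi\otimes\psi\leq\phi\otimes\phi^\vdash=0$, and your pre-established equivalence finishes. Your route is more elementary and self-contained, since it never uses Theorem \ref{Yoneda complete} (Vickers' result on Yoneda limits in $\CP X$), only the explicit formula for the left adjoint; the observation that the inequality $\psi\leq\phi^\vdash$ suffices is a real simplification. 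What the paper's computation buys is the exact identification $\phi^\vdash=\psi$, which it exploits in the remark following the lemma (the comparison with Proposition 4.13 of Hofmann and Reis); note, however, that your argument recovers this a posteriori: once biCauchyness is established, your fact (i) exhibits $\psi$ as a left adjoint of $\phi$, and left adjoints are unique.
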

\begin{proof}
If $\{x_\lam\}$ is biCauchy, it is easily verified that the coweight $\psi=\inf_\lam\sup_{\sigma\geq\lam} d(x_\sigma,-)$ is a left adjoint of $\phi$, hence $\phi$ is Cauchy.
Conversely, suppose that $\{x_\lam\}$  is a forward Cauchy net and $\phi=\inf_\lam\sup_{\sigma\geq\lam} d(-,x_\sigma)$ is a Cauchy weight. By Equation (\ref{left adjoint}) the left adjoint $\psi$ of $\phi$ is given by \[\psi(x) =\overline{d}(\phi,d(-,x)).\] Since  $\phi$ is a Yoneda limit of the forward Cauchy net $\{d(-,x_\lam)\}$ in $(\mathcal{F}X, \overline{d})$ by Theorem \ref{Yoneda complete}, it follows that for all $x\in X$, \begin{align*}
  \psi(x) &=\overline{d}(\phi,d(-,x)) \\
  &=\inf_\lam\sup_{\sigma\geq\lam}\overline{d}(d(-,x_\sigma),d(-,x))\\
  &=\inf_\lam\sup_{\sigma\geq\lam}d(x_\sigma,x).
\end{align*}
Therefore,
\[\inf_\lam\sup_{\sigma,\mu\geq\lam}d(x_\sigma,x_\mu)\leq \inf_{x\in X}\inf_\lam\sup_{\sigma,\mu\geq\lam}(d(x,x_\mu)+d(x_\sigma,x)) = \inf_{x\in X}(\phi(x)+\psi(x))=0,\] showing that $\{x_\lam\}$  is biCauchy. \end{proof}

The above lemma is similar to Proposition 4.13 in Hofmann and Reis \cite{HR2013}. However, there is a subtle difference. Proposition 4.13 in   \cite{HR2013} says that for every net $\{x_\lam\}$ in a metric space, the coweight $\psi=\inf_\lam\sup_{\sigma\geq\lam} d(x_\sigma,-)$ is   left adjoint to the weight  $\phi=\inf_\lam\sup_{\sigma\geq\lam} d(-,x_\sigma)$ if and only if $\{x_\lam\}$ is biCauchy. The above lemma shows that for a forward Cauchy net  $\{x_\lam\}$, if the weight  $\phi=\inf_\lam\sup_{\sigma\geq\lam} d(-,x_\sigma)$ has a left adjoint, then this left adjoint must be $\psi=\inf_\lam\sup_{\sigma\geq\lam} d(x_\sigma,-)$ and $\{x_\lam\}$ is biCauchy.

\begin{prop} A metric space $(X,d)$ is Smyth complete if and only if for each flat weight $\phi$ of $(X,d)$, there is a unique $a\in X$ such that $\phi=d(-,a)$. \end{prop}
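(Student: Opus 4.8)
The plan is to interpolate between the two conditions through the correspondence between flat weights and forward Cauchy nets supplied by Proposition \ref{flat weight}. The heart of the argument is the following bridging claim: if $\{x_\lam\}$ is a forward Cauchy net in $(X,d)$ and $\phi=\inf_\lam\sup_{\sigma\geq\lam}d(-,x_\sigma)$ is its (flat) weight, then for $a\in X$ one has $\phi=d(-,a)$ if and only if $\{x_\lam\}$ converges to $a$ in the symmetrization $d^{\rm sym}$. Granting this claim, the proposition follows quickly. For the forward implication, given a flat weight $\phi$, I would write it as the weight of a forward Cauchy net (Proposition \ref{flat weight}); Smyth completeness produces a limit $a$ of the net in $d^{\rm sym}$, and the claim gives $\phi=d(-,a)$, while uniqueness of $a$ is just separatedness, read off from $\overline{d}(d(-,a),d(-,a'))=d(a,a')$. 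For the converse, separatedness is immediate because $d(a,a')=d(a',a)=0$ forces the representable (hence flat, by Lemma \ref{Cauchy weight}) weights $d(-,a)$ and $d(-,a')$ to coincide, so the uniqueness hypothesis yields $a=a'$; and every forward Cauchy net converges in $d^{\rm sym}$ by applying the hypothesis to its flat weight and invoking the claim.

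It remains to prove the bridging claim. For the direction ``$d^{\rm sym}$-convergence $\Rightarrow\phi=d(-,a)$'', I would use the triangle inequality twice together with the identity (\ref{forward cauchy nets converge}) for forward Cauchy nets: from $d(x,x_\sigma)\leq d(x,a)+d(a,x_\sigma)$ and $d(a,x_\sigma)\to 0$ one gets $\phi(x)\leq d(x,a)$, while from $d(x,a)\leq d(x,x_\sigma)+d(x_\sigma,a)$ and $d(x_\sigma,a)\to 0$ one gets $d(x,a)\leq\phi(x)$. The same style of estimate handles the easy half of the reverse direction: if $\phi=d(-,a)$ then $d(x_\lam,a)=\phi(x_\lam)\leq\sup_{\sigma\geq\lam}d(x_\lam,x_\sigma)$, which is small for late $\lam$ by forward Cauchyness, so $d(x_\lam,a)\to 0$.

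The main obstacle is the remaining backward convergence $d(a,x_\lam)\to 0$, since forward Cauchyness alone does not control distances \emph{from} later terms; this is exactly where representability does more than a mere Yoneda limit would. Because $\phi=d(-,a)$ is representable it is Cauchy, so by Lemma \ref{bicauchy=cauchy} the net is biCauchy and the coweight $\psi=\inf_\lam\sup_{\sigma\geq\lam}d(x_\sigma,-)$ is a left adjoint of $\phi$. The adjunction inequality $\phi(x)+\psi(y)\geq d(x,y)$ evaluated at $x=a$ (where $\phi(a)=d(a,a)=0$) gives $d(a,y)\leq\psi(y)$ for all $y$, while biCauchyness gives $\psi(x_\lam)=\inf_\mu\sup_{\sigma\geq\mu}d(x_\sigma,x_\lam)\to 0$; hence $d(a,x_\lam)\to 0$. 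Combining the two halves yields $d^{\rm sym}(x_\lam,a)\to 0$, completing the claim and thereby the proposition.
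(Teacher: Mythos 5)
Your proposal is correct --- each step checks out --- and it shares the paper's overall skeleton: both arguments pivot on Proposition \ref{flat weight} to pass between flat weights and forward Cauchy nets, and both invoke Lemma \ref{bicauchy=cauchy}. The differences lie in how the two directions are discharged. For the direction ``Smyth complete implies every flat weight is representable'', the paper argues categorically: the limit $a$ in the symmetrization makes the net biCauchy with $a$ as a Yoneda limit, so $\phi$ is a Cauchy weight by Lemma \ref{bicauchy=cauchy} and ${\rm colim}\,\phi=a$ by Proposition \ref{supremum}; then Equations (\ref{left adjoint}) and (\ref{colim}) exhibit $d(a,-)$ as a left adjoint of $\phi$, and uniqueness of adjoints gives $\phi=d(-,a)$. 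You instead obtain $\phi=d(-,a)$ from two direct triangle-inequality estimates together with Equation (\ref{forward cauchy nets converge}); this is more elementary and bypasses colimits and adjoint uniqueness entirely. Conversely, for ``representability implies Smyth completeness'', the paper leaves the convergence of the net to $a$ in $(X,d^{\rm sym})$ to the reader, whereas you supply the argument and correctly isolate the only nontrivial half, namely $d(a,x_\lambda)\to 0$, which forward Cauchyness alone cannot deliver: your use of representability (hence Cauchyness of $\phi$, hence biCauchyness of the net plus the adjunction inequality) is exactly what is needed there. In short, the paper's route is shorter given the categorical infrastructure it has already built, while yours is more self-contained at the level of estimates and actually documents the half the paper omits.
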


\begin{proof}
\textbf{Sufficiency}. That $(X,d)$ is separated is obvious. Given a forward Cauchy net $\{x_\lam\}$ in $(X,d)$, let $\phi=\inf_\lam\sup_{\sigma\geq\lam} d(-,x_\sigma)$. Then $\phi$ is a flat weight, hence $\phi=d(-,a)$ for some $a\in X$. We leave it to the reader to check  that $\{x_\lam\}$ converges to $a$  in $(X,d^{\rm sym})$.

 \textbf{Necessity}. Let $\phi$ be a flat weight of $(X,d)$. By Proposition \ref{flat weight}, there is a forward Cauchy net $\{x_\lam\}$ in $(X,d)$ such that $\phi=\inf_\lam\sup_{\sigma\geq\lam} d(-,x_\sigma)$. By assumption, $\{x_\lam\}$ has a unique limit, say $a$, in $(X,d^{\rm sym})$. So, $\{x_\lam\}$ is a biCauchy net in $(X,d)$ with $a$ as a Yoneda limit. Thus,  $\phi$ is a Cauchy weight by Lemma \ref{bicauchy=cauchy} and $\colim\phi=a$ by Proposition \ref{supremum}. Then,  by Equation (\ref{left adjoint}) and Equation (\ref{colim}), $d(a,-)$ is a left adjoint of $\phi$, hence $\phi=d(-,a)$.
  \end{proof}

If there is an isometric map from a metric space $(X,d)$ to a Smyth complete metric space $(Y,p)$, then, by Proposition \ref{image of Cauchy weight}, every flat weight of $(X,d)$ will be a Cauchy weight. This leads to the following

\begin{defn}A metric space   is Smyth completable if  all of its flat weights   are Cauchy. \end{defn}

 The following conclusion says that the above postulation of Smyth completable metric spaces is equivalent to that in \cite{KS2002}.

\begin{prop} A metric space   is  Smyth completable  if and only if all of its forward Cauchy nets are biCauchy. \end{prop}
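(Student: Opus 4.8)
The plan is to exploit the two-way dictionary already set up in the preceding material: Proposition \ref{flat weight} identifies the flat weights of $(X,d)$ with the \emph{liminf} weights $\phi=\inf_\lam\sup_{\sigma\geq\lam} d(-,x_\sigma)$ of forward Cauchy nets, while Lemma \ref{bicauchy=cauchy} translates the Cauchyness of such a weight into the biCauchyness of a representing net. Once these are in hand, the proposition reduces to matching the universal quantifiers on the two sides of the equivalence, so I expect no genuine analytic obstacle---the work is in invoking the two cited results in the correct order.

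For the necessity direction I would assume $(X,d)$ is Smyth completable and take an arbitrary forward Cauchy net $\{x_\lam\}$. Setting $\phi=\inf_\lam\sup_{\sigma\geq\lam} d(-,x_\sigma)$, Proposition \ref{flat weight} guarantees that $\phi$ is a flat weight, hence Cauchy by the Smyth completability hypothesis; Lemma \ref{bicauchy=cauchy} then forces $\{x_\lam\}$ to be biCauchy. Since the net was arbitrary, every forward Cauchy net of $(X,d)$ is biCauchy.

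For the sufficiency direction I would assume every forward Cauchy net is biCauchy and take an arbitrary flat weight $\phi$. By Proposition \ref{flat weight} there is a forward Cauchy net $\{x_\lam\}$ with $\phi=\inf_\lam\sup_{\sigma\geq\lam} d(-,x_\sigma)$; the hypothesis renders this net biCauchy, so Lemma \ref{bicauchy=cauchy} shows $\phi$ is Cauchy. As $\phi$ was arbitrary, all flat weights of $(X,d)$ are Cauchy, i.e., $(X,d)$ is Smyth completable.

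The one point deserving care---and the reason the two lemmas must be applied in exactly the right direction---is that the correspondence between flat weights and forward Cauchy nets is many-to-many: a flat weight may be represented by several nets and a net determines only one weight. The argument sidesteps this by quantifying appropriately in each implication (starting from the net in necessity, from the weight in sufficiency), so that in both cases the net and its associated weight are paired before either lemma is used. No separatedness or completeness hypothesis on $(X,d)$ is required, and no estimate beyond those already internal to Lemma \ref{bicauchy=cauchy} enters.
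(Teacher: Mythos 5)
Your proof is correct and follows essentially the same route as the paper: both directions pair a forward Cauchy net with its liminf weight $\phi=\inf_\lam\sup_{\sigma\geq\lam} d(-,x_\sigma)$ via Proposition \ref{flat weight} and then transfer Cauchyness of the weight to biCauchyness of the net (and back) via Lemma \ref{bicauchy=cauchy}. The quantifier bookkeeping you highlight is handled identically in the paper's proof, which is equally brief.
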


\begin{proof} \textbf{Sufficiency}. Let $\phi$ be a flat weight. By Proposition \ref{flat weight}, there is a forward Cauchy net  $\{x_\lam\}$ such that $\phi=\inf_\lam\sup_{\sigma\geq\lam} d(-,x_\sigma)$. By assumption, $\{x_\lam\}$ is a biCauchy net, thus, $\phi$ is a Cauchy weight by Lemma \ref{bicauchy=cauchy}.

\textbf{Necessity}. Let $\{x_\lam\}$  be a forward Cauchy net.  Then $\phi=\inf_\lam\sup_{\sigma\geq\lam} d(-,x_\sigma)$ is a flat weight by Proposition \ref{flat weight}, hence a Cauchy weight by assumption.  Thus,  $\{x_\lam\}$  is   biCauchy  by Lemma \ref{bicauchy=cauchy}. \end{proof}

\begin{thm} Let $(X,d)$ be a metric space.
The following are equivalent: \begin{enumerate}[(1)] \item $(X,d)$ is Smyth completable.
\item The sobrification of $\Gamma(X, d)$ is a metric approach space.
\item The Yoneda completion on $(X,d)$ is idempotent, i.e., the map $\y_{\CF X}: (\CF X,\overline{d})\rightarrow (\CF(\CF X),\overline{\overline{d}})$ is surjective.
\end{enumerate}  In this case, the sobrification of $\Gamma(X, d)$ is generated by the Cauchy completion of  $(X,d)$. \end{thm}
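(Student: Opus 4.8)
The plan is to prove the cycle of implications $(1)\Rightarrow(2)\Rightarrow(3)\Rightarrow(1)$, reading off the closing assertion along the way. Throughout I will use the dictionary established earlier: by Lemma \ref{metric approach prime} the approach primes of $\Gamma(X,d)$ are exactly the flat weights of $(X,d)$, by Proposition \ref{regular function in MAS} the regular functions of $\Gamma(X,d)$ are exactly the weights of $(X,d)$, and by Theorem \ref{5.8} the specialization metric of the sobrification $S:=(\widehat X,\widehat{\Gamma(d)})$ is the Yoneda completion $(\CF X,\overline d)$. I will also invoke repeatedly the one-line observation, call it $(\star)$, that a metric approach space $\Gamma(Y,p)$ is sober if and only if $(Y,p)$ is Smyth complete: the approach primes of $\Gamma(Y,p)$ are the flat weights of $(Y,p)$ by Lemma \ref{metric approach prime}, while $\Gamma(p)(-,\{y\})=p(-,y)$ is the representable weight, so soberness says precisely that every flat weight of $(Y,p)$ is uniquely representable, which is the Smyth completeness criterion proved just above. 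Finally, since $\Gamma$ is a full and faithful coreflector with $\Omega\Gamma=\mathrm{id}$, an approach space $Z$ is a metric approach space if and only if the counit $\Gamma\Omega Z\to Z$ is an isomorphism, i.e. $Z=\Gamma(\Omega Z)$.

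For $(1)\Rightarrow(2)$ I would compute $\widehat{\Gamma(d)}$ explicitly. Fix a flat weight $\phi$ and a set $A$ of flat weights. Using the defining formula (\ref{widehat distance*}), the identities $\widehat\xi(\phi)=\overline d(\phi,\xi)$ and $\widehat\xi(\psi)=0\Leftrightarrow\xi\le\psi$ (Lemma \ref{widehat propoties}(3)), and property (W2), the constraint $\widehat\xi(\psi)=0$ for all $\psi\in A$ becomes $\xi\le\inf_{\psi\in A}\psi$; since $\overline d(\phi,-)$ is monotone in its second argument and $\inf_{\psi\in A}\psi$ is itself a weight, the supremum is attained there, giving $\widehat{\Gamma(d)}(\phi,A)=\overline d\big(\phi,\inf_{\psi\in A}\psi\big)$. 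On the other hand $\Gamma(\overline d)(\phi,A)=\inf_{\psi\in A}\overline d(\phi,\psi)$, and the inequality $\le$ between the two holds automatically. Hence $S$ is a metric approach space precisely when $\overline d(\phi,-)$ turns infima of weights into infima, for every flat $\phi$; and this is exactly what Lemma \ref{Cauchy weight}(4) delivers once each flat weight $\phi$ is Cauchy, which is hypothesis $(1)$. This computation — pinning down $\widehat{\Gamma(d)}$ and recognizing metricity as the inf-distributivity of $\overline d(\phi,-)$ — is the technical heart of the argument and the step I expect to be most delicate.

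For $(2)\Rightarrow(3)$, assume $S$ is metric. Then $S=\Gamma(\Omega S)=\Gamma(\CF X,\overline d)$ by Theorem \ref{5.8}, and $S$ is sober by Theorem \ref{sobrification}(1); so by $(\star)$ the space $(\CF X,\overline d)$ is Smyth complete, i.e. every flat weight of $\CF X$ is representable. Since $\CF(\CF X)$ is precisely the set of flat weights of $\CF X$ and the representable ones are exactly the image of $\y_{\CF X}$, this says $\y_{\CF X}$ is surjective, which is $(3)$. For $(3)\Rightarrow(1)$, recall that $\y_X\colon(X,d)\to(\CF X,\overline d)$ is isometric; by $(3)$ its target is Smyth complete, so for any flat weight $\phi$ of $(X,d)$ the image $\y_X(\phi)$ is flat by Proposition \ref{image of Cauchy weight}(1), hence representable, hence Cauchy, whence $\phi$ is Cauchy by Proposition \ref{image of Cauchy weight}(3); thus $(X,d)$ is Smyth completable, closing the cycle. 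Finally, under $(1)$ every flat weight is Cauchy while conversely every Cauchy weight is flat (Lemma \ref{Cauchy weight}(3)), so $(\CF X,\overline d)$ coincides with the Cauchy completion of $(X,d)$; combined with the identity $S=\Gamma(\CF X,\overline d)$ obtained in the proof of $(2)$, this shows that the sobrification of $\Gamma(X,d)$ is generated by the Cauchy completion, which is the closing assertion.
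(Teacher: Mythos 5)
Your proof is correct and follows essentially the same route as the paper's: the same computation of $\widehat{\Gamma(d)}$ on sets of approach primes (unconditionally equal to $\overline{d}(\phi,\inf A)$, then reduced to the inf-distributivity supplied by Lemma \ref{Cauchy weight}(4) under Cauchyness) for $(1)\Rightarrow(2)$, the same soberness-of-the-sobrification argument for $(2)\Rightarrow(3)$, and the same use of Proposition \ref{image of Cauchy weight}(1) and (3) applied to the isometry $\y_X$ for $(3)\Rightarrow(1)$, with the closing assertion obtained exactly as in the paper from the coincidence of flat and Cauchy weights. The only cosmetic difference is that you isolate as a standalone observation $(\star)$ the fact that a metric approach space $\Gamma(Y,p)$ is sober iff $(Y,p)$ is Smyth complete (anticipating Theorem \ref{main}, but derived non-circularly from Lemma \ref{metric approach prime} and the preceding proposition), whereas the paper inlines that reasoning.
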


\begin{proof} $(1)\Rightarrow(2)$ First of all, by virtue of Lemma \ref{metric approach prime}, each approach prime of $(X,\Gamma(d))$ is a flat weight of $(X,d)$, hence a Cauchy weight of $(X,d)$.

If we could show that for each approach prime $\psi$ of $(X,\Gamma(d))$ and every non-empty set $\{\phi_i\}_{i\in I}$ of approach primes of $(X,\Gamma(d))$, it holds that \[\widehat{\Gamma(d)}(\psi,\{\phi_i\}_{i\in I})=\inf_{i\in I}\overline{d}(\psi,\phi_i),\] then the sobrification of $(X,\Gamma(d))$ will be a metric approach space, generated by  $(\CF X,\overline{d})$, the Yoneda completion of $(X,d)$. To see this, we calculate:
 \begin{align*}
  \widehat{\Gamma(d)}(\psi,\{\phi_i\}_{i\in I}) &=\sup\{\widehat{\xi}(\psi)\mid \xi\in\mathcal{R}X,~\forall i\in I,~\widehat{\xi}(\phi_i)=0\} \\
&=\sup\{\widehat{\xi}(\psi)\mid \xi\in\mathcal{R}X,~\forall i\in I,~\xi\leq \phi_i\}\\
&=\sup\{\widehat{\xi}(\psi)\mid \xi\in\mathcal{R}X,~\xi\leq \inf_{i\in I}\phi_i\}\\
&=\widehat{\inf_{i\in I}\phi_i}(\psi)&(\inf_{i\in I}\phi_i\in\mathcal{R}X )\\
&=\sup_{x\in X}d_L(\psi(x),\inf_{i\in I}\phi_i(x))\\
&=\inf_{i\in I}\sup_{x\in X}d_L(\psi(x),\phi_i(x))&({\rm Lemma~ \ref{Cauchy weight}})\\
&=\inf_{i\in I}\overline{d}(\psi,\phi_i),
\end{align*} where, $\mathcal{R}X$ denotes the set of regular functions of $(X,\Gamma(d))$.

$(2)\Rightarrow(3)$
Since the sobrification $(\widehat{X},\widehat{\Gamma(d)})$ of $(X,\Gamma(d))$ is a metric approach space, it must be generated by the Yoneda completion $(\CF X,\overline{d})$ of $(X,d)$ by Theorem \ref{5.8}.  Given a flat weight $\Phi:\CF X\rightarrow[0,\infty]$ of $(\CF X,\overline{d})$, it follows from Lemma \ref{metric approach prime} that $\Phi$ is an approach prime of $(\widehat{X},\widehat{\Gamma(d)})$. Thus, there exists a unique $\xi\in \widehat{X}(=\CF X)$ such that \[\Phi=\widehat{\Gamma(d)}(-,\{\xi\}) = \overline{d}(-,\xi).\] This shows that  $\y_{\CF X}: (\CF X,\overline{d})\rightarrow (\CF(\CF X),\overline{\overline{d}})$ is surjective,  the conclusion thus follows.

$(3)\Rightarrow(1)$
 If $\phi$ is flat, then $\y_X(\phi)$ is a flat weight of $(\CF X,\overline{d})$. Thus, $\y_X(\phi)=\overline{d}(-,\psi)$ for some $\psi\in \CF X$ since $\y_{\CF X}: (\CF X,\overline{d})\rightarrow (\CF(\CF X),\overline{\overline{d}})$ is surjective. This shows that $\y_X(\phi)$ is a Cauchy weight of $(\CF X,\overline{d})$. Then, applying Proposition \ref{image of Cauchy weight}(3) to $\y_X$ gives that $\phi$ is Cauchy.

In this case,  the Cauchy completion and the Yoneda completion coincide with each other.  Hence, the final claim follows from Theorem \ref{5.8}. \end{proof}

\begin{cor}(\cite{BRC}) Let $(X,d)$ be a symmetric   metric space. Then the sobrification of $(X,\Gamma(d))$ is a metric approach space and is generated by the Cauchy completion of $(X,d)$. \end{cor}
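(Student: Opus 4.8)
The plan is to deduce this directly from the preceding theorem: it suffices to verify that a symmetric metric space $(X,d)$ is Smyth completable, for then condition $(1)\Rightarrow(2)$ of the theorem, together with its concluding clause, immediately gives that the sobrification of $\Gamma(X,d)$ is a metric approach space generated by the Cauchy completion of $(X,d)$. The only hypothesis I have to exploit is symmetry; note in particular that Smyth completability, unlike Smyth completeness, makes no separatedness demand, so no extra assumption on $(X,d)$ is required.

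To establish Smyth completability I would invoke the proposition that characterizes it as the condition ``every forward Cauchy net is biCauchy''. So let $\{x_\lam\}$ be a forward Cauchy net and fix $\varepsilon>0$; choose $\lam_0$ with $d(x_\mu,x_\nu)<\varepsilon$ whenever $\nu\geq\mu\geq\lam_0$. The forward Cauchy condition only controls $d(x_\mu,x_\nu)$ for \emph{ordered} pairs, so the real point is to bound $d(x_\mu,x_\nu)$ for arbitrary $\mu,\nu\geq\lam_0$, including incomparable indices. Here I would use directedness of the index set to pick $\kappa\geq\mu,\nu$, so that both $d(x_\mu,x_\kappa)<\varepsilon$ and $d(x_\nu,x_\kappa)<\varepsilon$; symmetry turns the latter into $d(x_\kappa,x_\nu)<\varepsilon$, and the triangle inequality then yields $d(x_\mu,x_\nu)\leq d(x_\mu,x_\kappa)+d(x_\kappa,x_\nu)<2\varepsilon$. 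Taking the supremum over $\mu,\nu\geq\lam_0$ and letting $\varepsilon\to0$ gives $\inf_\lam\sup_{\mu,\nu\geq\lam}d(x_\mu,x_\nu)=0$, i.e.\ $\{x_\lam\}$ is biCauchy.

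With Smyth completability in hand, the theorem supplies everything: the sobrification of $\Gamma(X,d)$ is a metric approach space and is generated by the Cauchy completion of $(X,d)$. I do not anticipate a genuine obstacle; the one subtle point is precisely the incomparable-index case in the biCauchy estimate, where the interpolating index $\kappa$ furnished by directedness together with symmetry is indispensable. Without symmetry the term arising from $d(x_\kappa,x_\nu)$ could not be reduced to a forward Cauchy bound, which is exactly why the general (non-symmetric) case fails and why one cannot drop the symmetry hypothesis here.
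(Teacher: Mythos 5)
Your proof is correct and follows the same route as the paper: reduce the corollary to the claim that every symmetric metric space is Smyth completable and then apply the preceding theorem's $(1)\Rightarrow(2)$ together with its concluding clause. The paper merely asserts that symmetric spaces are Smyth completable, whereas you supply the verification (via the biCauchy characterization, using directedness to interpolate an index $\kappa$ and symmetry to reverse $d(x_\nu,x_\kappa)$), which is exactly the argument the paper leaves implicit.
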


\begin{proof}This follows from   that every symmetric metric space is Smyth completable. \end{proof}

\begin{thm}\label{main} Let $(X,d)$ be a metric space. The following are equivalent: \begin{enumerate}[(1)] \item The approach space $(X,\Gamma(d))$ is sober.
\item $(X,d)$ is Smyth complete.
\item $(X,d)$ is a fixed point of the Yoneda completion, i.e., $\y_X:(X,d)\rightarrow (\CF X,\overline{d})$ is an isomorphism. \end{enumerate} \end{thm}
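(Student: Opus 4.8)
The plan is to prove the equivalence of (1), (2), and (3) by exploiting the machinery already assembled in the excerpt, in particular Lemma \ref{metric approach prime} (approach primes of $\Gamma(X,d)$ are exactly the flat weights of $(X,d)$), Theorem \ref{5.8} (the specialization metric of the sobrification of $\Gamma(X,d)$ is the Yoneda completion $(\CF X,\overline{d})$), and the categorical reformulation of Smyth completeness as ``separated, with every flat weight representable''. I would establish the cycle $(1)\Rightarrow(2)\Rightarrow(3)\Rightarrow(1)$, or equivalently prove $(1)\Leftrightarrow(2)$ and $(2)\Leftrightarrow(3)$ directly, since both are short given the preceding results.

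First I would prove $(1)\Leftrightarrow(2)$. Recall that $(X,\Gamma(d))$ is sober precisely when the isometric embedding $\eta_X:(X,\Gamma(d))\to(\widehat{X},\widehat{\Gamma(d)})$ is bijective, i.e.\ when every approach prime of $\Gamma(X,d)$ has the form $\Gamma(d)(-,\{a\})=d(-,a)$ for a unique $a\in X$. By Lemma \ref{metric approach prime} the approach primes of $\Gamma(X,d)$ are exactly the flat weights of $(X,d)$, and a representable weight $d(-,a)$ corresponds to the point $a$. Therefore $(X,\Gamma(d))$ is sober if and only if every flat weight of $(X,d)$ equals $d(-,a)$ for a unique $a$. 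By the proposition proved just above (\emph{``$(X,d)$ is Smyth complete if and only if for each flat weight $\phi$ there is a unique $a\in X$ with $\phi=d(-,a)$''}), this is exactly the statement that $(X,d)$ is Smyth complete. This gives $(1)\Leftrightarrow(2)$ almost immediately; the uniqueness clause is what encodes separatedness, so no extra argument is needed there.

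Next I would handle $(2)\Leftrightarrow(3)$. The Yoneda embedding $\y_X:(X,d)\to(\CF X,\overline{d})$ is always isometric (stated before Proposition \ref{supremum}), so it is an isomorphism exactly when it is surjective, i.e.\ when every flat weight $\phi$ of $(X,d)$ is representable, $\phi=d(-,a)$, together with separatedness guaranteeing uniqueness of $a$. Comparing this with the categorical description of Smyth completeness (``separated and all flat weights representable'') shows directly that $(2)$ and $(3)$ say the same thing. For $(3)\Rightarrow(2)$ one reads off that surjectivity of $\y_X$ forces every flat weight to be representable; separatedness follows because $\y_X$ being injective (part of being an isomorphism) is equivalent to separatedness of $(X,d)$. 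For $(2)\Rightarrow(3)$ one uses representability of every flat weight to obtain surjectivity of $\y_X$, and separatedness to ensure $\y_X$ is injective, hence bijective and therefore an isometric isomorphism.

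I do not expect a serious obstacle here, because the conceptual work has been front-loaded into Lemma \ref{metric approach prime}, Theorem \ref{5.8}, and the two characterization propositions immediately preceding this theorem. The only point requiring a little care is the bookkeeping of the \emph{separatedness} and \emph{uniqueness} conditions: Smyth completeness bundles separatedness into its definition, soberness of $\Gamma(X,d)$ demands a \emph{unique} representing point, and the isomorphism condition in $(3)$ encodes injectivity of $\y_X$. I would make sure to note explicitly that injectivity of $\y_X$ (equivalently, that $d(-,a)=d(-,b)$ forces $a=b$) is the same as $(X,d)$ being separated, so that the three ``uniqueness/separatedness'' formulations line up. Once this is observed, the equivalences are essentially a translation between the approach-theoretic language (approach primes, soberness) and the enriched-categorical language (flat weights, representability), mediated by the two key lemmas, and the proof reduces to a short chain of citations with minimal computation.
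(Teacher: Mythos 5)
Your proof is correct and takes essentially the same route as the paper: both arguments rest on Lemma \ref{metric approach prime} together with the characterization of Smyth completeness as unique representability of flat weights, plus the isometry of the Yoneda embedding $\y_X$. The paper organizes the argument as the cycle $(1)\Rightarrow(2)\Rightarrow(3)\Rightarrow(1)$ rather than your two biconditionals, but the individual steps are the same translations, and your explicit bookkeeping of separatedness versus injectivity of $\y_X$ is a correct (and slightly more careful) rendering of what the paper leaves implicit.
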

\begin{proof} $(1)\Rightarrow(2)$ If $\phi$ is flat weight of $(X,d)$, then $\phi$  is an approach prime of $(X,\Gamma(d))$ by Lemma \ref{metric approach prime}, so, there is a unique $a\in X$ such that $\phi=\Gamma(d)(-,\{a\})=d(-,a)$, showing that $(X,d)$ is Smyth complete.

$(2)\Rightarrow(3)$ This follows from the construction of the Yoneda completion and the fact that all flat weights of $(X,d)$ are of the form $d(-,a)$.

$(3)\Rightarrow(1)$  Let $\phi$ be an approach prime of $(X,\Gamma(d))$. By Lemma \ref{metric approach prime}, $\phi$ is a flat weight of $(X,d)$, hence an element of the Yoneda completion of $(X,d)$. Since $(X,d)$ is a fixed point of the Yoneda completion, there is a unique $a\in X$ such that $\phi=d(-,a) =\Gamma(d)(-,\{a\})$. Hence  $(X,\Gamma(d))$ is sober.
\end{proof}

\noindent{\bf Acknowledgement} The authors thank cordially the referee for her/his most valuable comments and  helpful suggestions.


\begin{thebibliography}{10}

\bibitem{BRC} B. Banaschewski, R. Lowen, C. Van Olmen, Sober approach spaces, Topology and its Applications 153 (2006) 3059-3070.

\bibitem{BvBR1998} M. M. Bonsangue, F. van Breugel, J. J. M. M. Rutten, Generalized metric space: completion, topology, and powerdomains via the Yoneda embedding, Theoretical Computer Science 193 (1998)  1-51.

\bibitem{FS02} R. C. Flagg, P. S\"{u}nderhauf, The essence of ideal completion in quantitative form, Theoretical Computer Science 278 (2002) 141-158.

\bibitem{FSW} R. C. Flagg, P. S\"{u}nderhauf, K. R. Wagner, A logical approach to quantitative domain theory, Topology Atlas Preprint No. 23, 1996.   http://at.yorku.ca/e/a/p/p/23.htm

\bibitem{GVV} A. Gerlo, E. Vandersmissen, C. Van Olmen, Sober approach spaces are firmly reflective for the class of epimorphic embeddings, Applied Categorical Structures 14 (2006) 251-258.

\bibitem{domains} G. Gierz, K. H. Hofmann, K. Keimel, J. D. Lawson, M. Mislove, D. S. Scott, Continuous Lattices and Domains, Cambridge University Press, 2003.

\bibitem{Goubault} J. Goubault-Larrecq,   Non-Hausdorff Topology and Domain Theory, Cambridge University Press, Cambridge, 2013.

\bibitem{GH} G. Gutierres, D. Hofmann, Approaching metric domains, Applied Categorical Structures 21(2013) 617-650.

\bibitem{Hof2011} D. Hofmann, Injective spaces via adjunction, Journal of Pure and Applied Algebra 215 (2011)  283-302.
\bibitem{HST} D. Hofmann, G. J. Seal,   W. Tholen  (editors), Monoidal Topology: A Categorical Approach to Order, Metric, and Topology,  Encyclopedia of Mathematics and its Applications, Vol. 153, Cambridge University Press, Cambridge, 2014.

\bibitem{HR2013}D. Hofmann,  C. D. Reis, Probabilistic metric spaces as enriched categories, Fuzzy Sets and Systems 210 (2013) 1-21.

\bibitem{Johnstone} P. T. Johnstone, Stone Spaces, Cambridge University Press, Cambridge, 1982.

\bibitem{Kelly} G. M. Kelly,  Basic Concepts of Enriched Category Theory, London Mathematical Society Lecture Notes Series, Vol. 64, Cambridge University Press, Cambridge, 1982.

\bibitem{KS05}G. M. Kelly, V. Schmitt, Notes on enriched categories with colimits of some class, Theory and Applications of Categories 14 (2005) 399-423.

\bibitem{KSF}R. Kopperman, P. S\"{u}nderhauf, B. Flagg, Smyth completion as bicompletion, Topology and its Applications 91 (1999) 169-180.


\bibitem{KS2002}  H. P. K\"{u}nzi, M. P. Schellekens, On the Yoneda completion of a quasi-metric space, Theoretical Computer Science 278 (2002) 159-194.

\bibitem{LT16}H. Lai, W. Tholen, Quantale-valued approach spaces via closure and convergence, 2016.  arXiv:1604.08813v1

\bibitem{LZ07} H. Lai, D. Zhang, Complete and directed complete $\Omega$-categories, Theoretical Computer Science 388 (2007) 1-25.

\bibitem{Lawvere73}  F. W. Lawvere, Metric spaces, generalized logic, and closed categories, Rendiconti del Seminario Mat\'{e}matico e Fisico di Milano 43 (1973) 135-166.
\bibitem{LLZ16}W. Li, H. Lai, D. Zhang, Yoneda completeness and flat completeness of ordered fuzzy sets, Fuzzy Sets and Systems  313 (2017) 1-24.

\bibitem{RL89} R. Lowen, Approach spaces: a common supercategory of \textbf{TOP} and \textbf{MET}, Mathematische Nachrichten 141 (1989) 183-226.

\bibitem{RL97} R. Lowen, Approach Spaces: the Missing Link in the Topology-Uniformity-Metric Triad, Oxford University Press, 1997.

\bibitem{Lowen15} R. Lowen, Index Analysis, Approach Theory at Work, Springer, 2015.


\bibitem{Ro90} K. I. Rosenthal, Quantales and Their Applications, Pitman Research Notes in Mathematics Series, Vol. 234, Longman, Essex, 1990.

\bibitem{Ru} J. J. M. M. Rutten, Weighted colimits and formal balls in generalized metric spaces, Topology and its Applications 89 (1998) 179-202.


\bibitem{Smyth87} M. B. Smyth, Quasi-uniformities: Reconciling domains with metric spaces, Lecture Notes in Computer Science, Vol. 298, Springer, Berlin, 1987, pp. 236-253.

\bibitem{Smyth94} M. B. Smyth, Completeness of quasi-uniform and syntopological spaces, Journal of London Mathematical Society 49 (1994) 385-400.

\bibitem{St05} I. Stubbe, Categorical structures enriched in a quantaloid: categories, distributors and functors, Theory and Applications of Categories 14 (2005) 1-45.


\bibitem{SV2005} S. Vickers, Localic completion of generalized metric apaces, Theory and Application of Categories 14 (2005) 328-356.


\bibitem{Wagner97} K. R. Wagner, Liminf convergence in $\Om$-categories, Theoretical Computer Science 184 (1997) 61-104.

\end{thebibliography}
\end{document}